\DeclareMathOperator{\lastBorn}{\ell}
\DeclareMathOperator{\choosePath}{c}
\DeclareMathOperator{\Forbs}{Forb^*}
\newcommand{\sm}{\setminus}
\setlist[itemize]{noitemsep, topsep=0pt}
\newcolumntype{L}[1]{>{\raggedright\let\newline\\\arraybackslash\hspace{0pt}}m{#1}}
\newcolumntype{C}[1]{>{\centering\let\newline\\\arraybackslash\hspace{0pt}}m{#1}}
\newcolumntype{R}[1]{>{\raggedleft\let\newline\\\arraybackslash\hspace{0pt}}m{#1}}
\newtheorem{theorem}{Theorem}[section]
\newtheorem{lemma}[theorem]{Lemma}
\newtheorem{corollary}[theorem]{Corollary}
\newtheorem{remark}[theorem]{Remark}
\title{Burling graphs revisited, part III:\\ Applications to $\chi$-boundedness}
\author{Pegah Pournajafi\thanks{Univ Lyon, EnsL, UCBL, CNRS, LIP,
		F-69342, LYON Cedex 07, France. Partially supported by the LABEX
		MILYON (ANR-10-LABX-0070) of Universit\'e de Lyon, within the
		program ‘‘Investissements d'Avenir’’ (ANR-11-IDEX-0007) operated
		by the French National Research Agency (ANR) and by Agence
		Nationale de la Recherche (France) under research grant ANR
		DIGRAPHS ANR-19-CE48-0013-01.}~~and Nicolas Trotignon\footnotemark[1]}
\begin{document}

\maketitle

	\begin{abstract}
		The Burling sequence is a sequence of triangle-free graphs of unbounded chromatic number. The class of Burling graphs consists of all the induced subgraphs of the graphs of this sequence. 
		
		In the first and second parts of this work, we introduced derived graphs, a class of graphs, equal to the class of Burling graphs, and proved several geometric and structural results about them. 
		
		In this third part, we use those results to find some Burling and non-Burling graphs, and we see some applications of this in the theory of $\chi$-boundedness. Specifically, we show that several graphs, like $ K_5 $, some series-parallel graphs that we call necklaces, and some other graphs are not weakly pervasive.
	\end{abstract}

	\section{Introduction}

	The chromatic number of a graph $ G $, denoted by $ \chi (G) $, is the smallest number $ k $ such that we can partition the vertex set of $ G $ into $ k $ stable sets. The clique number of $ G $, denoted by $ \omega (G) $, is the maximum number of mutually adjacent vertices in $ G $. For every graph $G$, we have  $\chi(G) \geq \omega(G)$, but the converse inequality is false in general. It was known even from the early days of graph theory that there are triangle-free graphs with arbitrarily large chromatic number. For instance, Tutte's construction (for the definition and more examples see Section 2 of~\cite{Scott18}). The study of the graphs satisfying several variants of the converse inequality was the	object of many researches, and led to defining perfect graphs (see~\cite{Trotignon13} for more details), and a generalization, $ \chi$-bounded classes (see~\cite{Gyarfas87}). A class $ \mathcal C $ of graphs is called \emph{$ \chi $-bounded} with a \emph{binding function} $ f $ if for every $ G \in \mathcal C $, and for every induced subgraph $ H $ of $ G $ we have $ \chi(H) \leq f(\omega(H)) $. A function $ f $ is a \emph{$\chi$-binding function} for a class $ \mathcal C $ of graphs if for every $ G \in \mathcal C $, and for every induced subgraph $ H $ of $ G $, $ \chi(H) \leq f(\omega(H)) $. A class $ \mathcal C $ is called \emph{$\chi$-bounded} if there is a $\chi$-binding function for it. 
	
	It is in particular interesting to study the $\chi$-boundedness of specific classes of graphs. For a graph $ H $, a graph not including $ H $ as an induced subgraph is called \emph{$H$-free}, and one can ask whether the class of all $ H $-free graphs is $\chi$-bounded for a given graph $ H $. Erd\H os showed in~\cite{Erdos59} that for any given integers $ g $ and $ k $, there are graphs with girth larger than $ g $ and chromatic number larger than $ k $. Therefore for any given $ H $ with cycles, we can find graphs of arbitrary large chromatic number and with girth larger than the size of the biggest cycle in $ H $. Hence if the class of all $H$-free graphs is a $ \chi $-bounded class, then $ H $ must be a forest. Gy{\'a}rf{\'a}s~\cite{Gyarfas87}, and Sumner~\cite{Sumner81} conjectured the inverse: they conjectured that the class of $ F $-free graphs is $\chi$-bounded for every forest $ F $. It is possible to reduce this conjecture to the case that $ F $ is a tree (see Section 3 of~\cite{Scott18}). 
	
	In~\cite{Scott97}, Scott proved a weakening of the Gy{\'a}rf{\'a}s-Sumner conjecture. A subdivision of a graph $ H $ is a graph obtained from $ H $ by replacing some of its edges by paths of length at least~1. (Hence, every graph is a subdivision of itself.) We denote by $ \Forbs(H) $ the class of all graphs which do not contain any subdivisions of $ H $ as an induced subgraph. Scott proved that for every tree $ T $, the class $ \Forbs(T) $ is $\chi$-bounded. (Theorem 1 of~\cite{Scott97}.) Notice that if $ H $ has cycles, then subdivisions of $ H $ can have arbitrary large cycles, and therefore the previous idea cannot be applied here. In fact, $ \Forbs(H) $ might be $\chi$-bounded even if $ H $ is not a tree. In~\cite{Scott97}, Scott conjectured (Conjecture~8) that $ \Forbs(H) $ is $\chi$-bounded for every graph $ H $. 
	
	Pawlik, Kozik, Krawczyk, Laso\'n, Micek, Trotter, and Walczak disproved Scott's conjecture in~\cite{Pawlik2012Jctb}. But even after that, it remained an interest to know for which graphs $ H$, $ \Forbs(H) $ is $\chi$-bounded. A graphs $ H $ is called \emph{weakly pervasive} if $ \Forbs(H) $  is $\chi$-bounded. There still exists not even a good conjecture for the characterization of weakly pervasive graphs, and it is of interest to find examples of graphs that are or are not weakly pervasive. 
	
	One method for finding graphs that are not weakly pervasive, which is the core of all the researches so far in finding these graphs, is using some appropriate classes of graphs: suppose that you have a class $ \mathcal C $ of graphs which is not $\chi$-bounded, and moreover does not contain any subdivision of some given graph $ H $. This shows that $ \mathcal C \subseteq \Forbs(H) $ and thus $ H $ is not a weakly pervasive graph. The class of Burling graphs, introduced by Burling~\cite{Burling65} in 1965, is not $\chi$-bounded, and moreover, there are many graphs which are not in the class. So, the class of Burling graphs is a tool for finding graphs that are not weakly pervasive. 
	
	In~\cite{Pawlik2012Jctb}, where Pawlik, Kozik, Krawczyk, Laso\'n, Micek, Trotter, and Walczak disproved Scott's conjecture, they found Burling graphs as a subclass of \emph{line segment graphs}, and then they showed that some graphs (e.g.\ 1-subdivision of any non-planar graph) are not line segment graphs. In~\cite{Chalopin2014} also, Chalopin, Esperet, Li, and Ossona
	de Mendez used the same technique, and proved that Burling graphs are a subclass of \emph{restricted frame graphs} (as already suggested in \cite{Pawlik2015}), and then found many graphs which are not restricted frame graphs and neither are their subdivisions. All these suggest that understanding Burling graphs is of great importance in the field of $\chi$-boundedness. 
	
	In~\cite{Part1}, the first part of this work, with the goal of understanding Burling graphs better, we defined many classes of graphs, all equal to Burling graphs. In particular, we defined the class of \emph{derived} graphs, and we studied their structure in details in~\cite{Part2}, the second part of this work. In the next section, we give a summary of some of the results of the previous parts that we are going to use in this article. Then, in the rest of the article, we use those results to find several examples of new graphs that are not weakly pervasive:	
	\begin{itemize}
		\item[-] $ K_5 $ and thus any $ K_n $ for $ n\geq 5 $, in Section~\ref{section:Kn},
		\item[-] Specific necklaces in Section~\ref{section:necklace},
		\item[-] Some examples of dumbbell graphs, which contain vertex cuts, in Section~\ref{section:dumbbells},
		\item[-] Some graphs of different kinds in Section~\ref{section:Misc}.
	\end{itemize}

	\section{Summary of results from Parts I and II} \label{section:summary}
	
	In this section, we summarize the previous results on Burling
        graphs, mainly from~\cite{Part1} and~\cite{Part2}, that we
        need for the next sections.  We do not recall the most
        classical notation.

	\subsection*{Derived graphs}

	A \emph{Burling tree} is a 4-tuple
	$(T,r, \lastBorn, \choosePath)$ in which:
	
	\begin{enumerate}[label=(\roman*)] 
		\item $ T $ is a rooted tree and $ r $ is its root,
		\item $\lastBorn$ is a function associating to each vertex $v$ of $T$
		which is not a leaf, one child of $v$ which is called the
		\emph{last-born} of $v$,
		\item $\choosePath$ is a function defined on the vertices of $ T $: if
		$ v $ is a non-last-born vertex of $ T $ other than the root, then
		$ \choosePath $ associates to $ v $ the vertex-set of a (possibly empty)
		branch in $T$ starting at the last-born of the parent of $v$. If $v$ is a
		last-born or the root of $T$, then we define
		$ \choosePath(v) = \varnothing $. We call $ \choosePath $ the \textit{choice
			function} of $ T $.
	\end{enumerate}
	
	By abuse of notation, we often use $T$ to denote the 4-tuple.

	The oriented graph $G$ \emph{fully derived} from the Burling tree $T$
	is the oriented graph whose vertex-set is~$V(T)$ and~ $uv \in A(G) $ if and only if $v$ is a vertex in~$\choosePath(u)$.  A
	non-oriented graph $G$ is \emph{fully derived} from $T$ if it is the
	underlying graph of the oriented graph fully derived from $T$.
	
	A graph (resp.\ oriented graph) $G$ is \emph{derived} from a Burling
	tree~$T$ if it is an induced subgraph of a graph (resp.\ oriented
	graph) fully derived from~$T$. The oriented or non-oriented graph
	$G$ is called a \emph{derived graph} if there exists a Burling
	tree~$T$ such that~$G$ is derived from~$T$. See Figure~\ref{pic:K5sdB-minus-1vertex}.

	For more about the definition of derived graphs and their basic properties, see Section 3 of~\cite{Part1}.

	Because of the theorem below, we do not repeat the definition
        of Burling graphs here. For their definition see Definition
        4.1 of~\cite{Part1}. We just remind that the class of Burling
        graphs have unbounded chromatic number. It is first proved by
        Burling in~\cite{Burling65}, but the proof can be find in many
        references, for example, see~\cite{Pawlik2012Jctb}
        or~\cite{Part1}.
	
	\begin{theorem}[\cite{Part1}, Theorem
          4.9] \label{thm:BG=derived} The class of non-oriented
          derived graphs is the same as the class of Burling graphs.
	\end{theorem}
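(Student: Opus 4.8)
The statement asserts the equality of two classes, so the plan is to prove the two inclusions separately. Both classes are, by construction, closed under taking induced subgraphs: Burling graphs are exactly the induced subgraphs of the members of the Burling sequence, and derived graphs are exactly the induced subgraphs of the fully derived graphs. Hence it suffices to compare the ``generators'' of each class. Concretely, I would show that (a) every member $G_k$ of the Burling sequence is the underlying graph of a graph fully derived from some Burling tree, which gives $\text{Burling}\subseteq\text{derived}$, and (b) every graph fully derived from a Burling tree is an induced subgraph of some $G_k$, which gives $\text{derived}\subseteq\text{Burling}$.

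For the inclusion $\text{Burling}\subseteq\text{derived}$, I would proceed by induction on $k$, tracking the recursive construction of the Burling sequence from Definition~4.1 of~\cite{Part1}. At each step $G_{k+1}$ is assembled from copies of $G_k$ together with newly introduced vertices attached along a prescribed pattern, and the idea is to build, in parallel, a Burling tree $T_{k+1}$ out of the trees realizing those copies. The last-born function $\lastBorn$ should record which copy plays the distinguished role at each step, while the choose function $\choosePath$ should be configured so that the branch it selects, starting at the last-born of a parent, reproduces exactly the set of new neighbours created at that step of the Burling construction. The heart of this direction is then the verification that the arc rule ``$uv$ is an arc iff $v\in\choosePath(u)$'' yields precisely the adjacencies prescribed for $G_{k+1}$, with no edge missing and none spurious.

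For the reverse inclusion $\text{derived}\subseteq\text{Burling}$, I would induct on the Burling tree $T$ itself, say on its number of vertices or its height, showing that the graph fully derived from $T$ embeds as an induced subgraph into some $G_k$. Deleting a leaf, or peeling off a last-born subtree, should correspond to stepping down one level in the sequence; the check is that the adjacencies dictated by $\choosePath$ never create an edge forbidden in the target $G_k$ and never omit a required one, so that the embedding remains \emph{induced} throughout. As in the first direction, the indexing of copies must be arranged so that the induced-subgraph relation is preserved at each inductive step.

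The main obstacle is the bookkeeping needed to match the two recursions exactly. The choose function is a rigid device: it must select a branch beginning at a specific last-born vertex, and one has to argue that this rigidity is both \emph{necessary} and \emph{sufficient} to capture the copy-and-attach pattern of the Burling construction. Pinning down the dictionary between last-born choices and the recursive step of the sequence, and getting the base cases and the labelling of copies to line up in both directions simultaneously, is where the delicate work lies; once that correspondence is fixed, the adjacency verifications on either side should reduce to routine checking.
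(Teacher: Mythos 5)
First, a point of comparison: the present paper does not prove this statement at all --- it is imported from Part~I of the series (\cite{Part1}, Theorem~4.9), so there is no in-paper proof to measure your attempt against. On its own terms, your outline picks the natural route, and in spirit the correct one: both classes are closed under induced subgraphs by construction, so it suffices to relate the generators, embedding each graph of the Burling sequence into the derived class and each fully derived graph into the sequence.

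There is, however, a genuine gap in your argument for the inclusion $\text{derived} \subseteq \text{Burling}$. You propose to induct on the Burling tree $T$ by ``deleting a leaf, or peeling off a last-born subtree,'' claiming this ``corresponds to stepping down one level in the sequence.'' It does not: passing from $G_{k+1}$ to $G_k$ in the Burling recursion removes a large, rigidly structured portion of the graph ($G_{k+1}$ is assembled from many copies of $G_k$ plus attachment vertices), whereas your inductive step removes a single vertex of an arbitrary tree; the two recursions are not in step. More seriously, the inductive hypothesis you carry --- ``the fully derived graph of $T'$ embeds as an induced subgraph of some $G_k$'' --- is too weak to re-insert the deleted vertex. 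An arbitrary induced-subgraph embedding of the derived graph remembers nothing about where the \emph{tree} structure of $T'$ (parent/child relations, last-borns, choose branches) sits inside the tree generating $G_k$, yet the arcs of the re-inserted vertex are dictated exactly by that structure. There is also a well-definedness problem: if the deleted leaf is a last-born, the choose sets of its siblings are branches starting at that very leaf, so $T$ minus the leaf is not a Burling tree until $\lastBorn$ and $\choosePath$ are repaired, and this repair changes the derived graph.

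The fix is to strengthen the induction hypothesis: prove a universality statement for the canonical trees generating the Burling sequence --- namely, that every Burling tree (of bounded size) admits an embedding into such a canonical tree which preserves the root, the parent relation, the last-born function, and the choose branches --- and only then pass to the derived graphs, where the induced-subgraph relation follows automatically from the structure-preserving tree embedding. That embedding is the real content of the theorem; your sketch defers precisely this point to ``bookkeeping,'' which is where the proof either succeeds or fails.
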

		
	By Theorem~\ref{thm:BG=derived}, we may interchangeably use
        the words derived graph and Burling graph.
	
	In this article, following the style of the first two parts of
        this work, for drawing the derived graph representation of
        graphs, we show the last-born of a vertex of the Burling tree
        as its right-most child. Also, we show the edges of the
        Burling tree with black and the arcs of the graph derived from
        it with red. Finally, we denote the vertices of the Burling
        tree which are in the graph derived from it with black, and
        the rest of its vertices (the \textit{shadow vertices}) with
        white.
	
	\subsection*{Subdivisions of $K_4$ and wheels}	
	
	The following lemma characterizes the subdivisions of $K_4$
        which are Burling graphs.
        
	\begin{lemma}[\cite{Part2}, Theorem
          7.3] \label{thm:subdivisions-of-K4} Let $G$ be a
          non-oriented graph obtained from $K_4$ by subdividing
          edges. Then $G$ is a Burling graph if and only if $G$
          contains four vertices $a$, $b$, $c$, and $d$ of degree 3
          such that $ab, ac \in E(G)$ and $ad, bc \notin E(G)$.
	\end{lemma}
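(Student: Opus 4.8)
The plan is to prove both implications through the derived-graph description of Burling graphs furnished by \Cref{thm:BG=derived}, regarding the four degree-$3$ vertices $a,b,c,d$ as the branch vertices of the subdivided $K_4$ and recording, for each of the six edges of $K_4$, whether it is \emph{short} (unsubdivided, hence an edge of $G$) or \emph{long} (subdivided into a path of length at least $2$). A first useful observation is that the stated condition already forces $G$ to be triangle-free: the four triangles of $K_4$ are $\{a,b,c\},\{a,b,d\},\{a,c,d\},\{b,c,d\}$, and each of them contains one of the long edges $bc$ or $ad$, so no triangle of $K_4$ survives the subdivision. Since Burling graphs are triangle-free this is consistent, and under triangle-freeness the condition is equivalent to the cleaner statement that \emph{some branch vertex is incident to exactly two short edges} (if $a$ has short edges $ab,ac$ then $bc$ is automatically long, else $a,b,c$ would form a triangle, and $ad$ is long since $a$ has short-degree $2$). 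I would prove the lemma in this reformulated form.

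For sufficiency I would give an explicit Burling tree $(T,r,\lastBorn,\choosePath)$ from which $G$ is derived. The short-degree-$2$ vertex $a$, with short neighbours $b$ and $c$, serves as the anchor; depending on whether the two unconstrained edges $bd,cd$ are subdivided, the short graph is a cherry $b\,a\,c$, a path on four vertices, or a $4$-cycle $a\,b\,d\,c$, so it suffices to treat these three base patterns up to relabelling. For each I would realize the two short edges $ab,ac$ together with the subdivided paths joining the branch vertices using the path-building constructions of~\cite{Part1,Part2}. Because the condition leaves the lengths of the subdivided edges free, the construction must carry one length parameter per long path; I would arrange each such path as a branch of $T$ that can be extended to arbitrary length, so that a single parametrized gadget covers the whole family, and then verify that lengthening these branches preserves the induced-subgraph condition defining \enquote{derived}.

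The necessity direction is the heart of the lemma. Since a Burling graph is triangle-free, I may assume $G$ is triangle-free, so the only short-edge patterns left to exclude are those with no branch vertex of short-degree $2$: the empty graph, a single short edge, a short matching, and the star $K_{1,3}$. To rule these out I would exploit the canonical orientation carried by any derived graph, in which the out-neighbours of a vertex $u$ lie in the branch $\choosePath(u)$ and are therefore pairwise comparable in the tree order of $T$. Analyzing the at most three arcs incident to each branch vertex, together with the way the internally disjoint paths of the subdivision must nest inside $T$, I would show that each forbidden pattern forces two out-neighbourhoods — or an out- and an in-neighbourhood — into an impossible configuration in the tree.

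The main obstacle is excluding the star $K_{1,3}$. It is triangle-free and locally resembles the admissible cherry and path patterns, so the elementary triangle argument that disposes of the denser, non-triangle-free subgraphs gives nothing here. Ruling it out seems to require the full branch-nesting analysis of~\cite{Part2}: a case analysis on the orientations of the three short edges at the centre of the star, combined with the constraint that the three subdivided paths joining the three leaves must embed consistently in $T$, showing that no Burling tree can realize this pattern. This is the step I expect to absorb most of the work.
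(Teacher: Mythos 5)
First, a point of order: this paper does not prove the statement at all; it is imported verbatim from Part II (Lemma 7.3 of~\cite{Part2}), so there is no in-paper proof to compare against. Judged on its own terms, your set-up is sound: the condition does imply triangle-freeness, your reformulation (``some branch vertex has exactly two short edges'') is equivalent to it, and your inventory of short-edge patterns is complete and correct --- cherry, $P_4$ and $C_4$ are the admissible patterns (types 4, 3, 2 of the paper), while the empty pattern, a single short edge, a short matching, and the star $K_{1,3}$ are what must be shown non-Burling. You also correctly flag the star as the genuinely hard case.

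The gap is that, beyond this set-up, neither implication is actually proved. For sufficiency you announce constructions (``I would give an explicit Burling tree\dots'') without exhibiting any gadget or verifying that the long paths sit on bottom arcs or on top arcs issuing from sources, which is the hypothesis needed to invoke the subdivision operations of Lemma~\ref{thm:subdivision}. For necessity --- which you yourself call the heart of the lemma --- the proposal contains no argument at all, only the intention to show that each forbidden pattern ``forces an impossible configuration in the tree''; comparability of out-neighbourhoods in the tree order is a true property of derived graphs, but you never derive a contradiction from it for even one forbidden pattern. The missing ideas are concrete. For the empty, single-edge and matching patterns, the efficient route is Theorem~\ref{thm:starcutset-chandelier-degree1}: a subdivision of $K_4$ has minimum degree~2 and is never a chandelier (no vertex lies on all cycles), so a Burling one must have a full in-star cutset, whereas a direct check shows these three patterns yield graphs with no star cutset whatsoever. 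For the star pattern this fails --- the hub does center a star cutset, as you sensed --- and one instead needs the oriented machinery: only the hub $a$ can center a full in-star cutset, and for $N^-[a]$ to disconnect the graph at least two spokes must be oriented into $a$; then the three holes through pairs of spokes pairwise form dominos sharing the spoke edges, and Lemma~\ref{thm:domino}, combined with the fact that a vertex with an out-arc inside a hole cannot be that hole's pivot, forces $a$ to be the pivot of two holes while being required to be subordinate in one of them --- a contradiction. Your ``branch-nesting analysis'' gestures at the right territory but never reaches any such argument, so as it stands the proposal is a plan, not a proof.
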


	Let $ G $ be a subdivision of $K_4 $, which is a Burling
        graph. We say that $ G $ is of type $ i $, for $ i =2,3,4 $,
        if exactly $ i $ edges of the $ K_4 $ are properly subdivided
        to obtain $ G $. Notice that by
        Lemma~\ref{thm:subdivisions-of-K4}, up to symmetry, there is
        only one possibility for the choice of the edges to subdivide
        for each type.  See Figure~\ref{fig:sd-of-K4}.

	\begin{figure}
		\centering
		\begin{center}
			\begin{tikzpicture}[scale=.35]
			\filldraw[black] (-4,0) circle (5pt) node[anchor=north east] {$b$};
			\filldraw[black] (4,0) circle (5pt) node[anchor=north west] {$c$};
			\filldraw[black] (0,2) circle (5pt) node[anchor=south east] {$d$};
			\filldraw[black] (0,6) circle (5pt) node[anchor=south east] {$a$};
			\filldraw[black] (0,4) circle (5pt);
			\filldraw[black] (0,0) circle (5pt);
			\draw[black] (-4,0) -- (0,6) -- (4,0) -- (0,0);
			\draw[black] (-4,0) -- (0,2) -- (4,0);
			\draw[black] (0,2) -- (0,4);
			\draw[dashed] (0,4) -- (0,6);
			\draw[dashed] (0,0) -- (-4,0);
			\end{tikzpicture} 
			\hspace*{.6cm}
			\begin{tikzpicture}[scale=.35]
			\filldraw[black] (-4,0) circle (5pt) node[anchor=north east] {$b$};
			\filldraw[black] (4,0) circle (5pt) node[anchor=north west] {$c$};
			\filldraw[black] (0,2) circle (5pt) node[anchor=south east] {$d$};
			\filldraw[black] (0,6) circle (5pt) node[anchor=south east] {$a$};
			\filldraw[black] (0,4) circle (5pt);
			\filldraw[black] (0,0) circle (5pt);
			\filldraw[black] (-2,1) circle (5pt);
			\draw[black] (-4,0) -- (0,6) -- (4,0) -- (0,0);
			\draw[black] (0,2) -- (0,4);
			\draw[dashed] (0,0) -- (-4,0);
			\draw[dashed] (0,4) -- (0,6);
			\draw (0,2) -- (4,0);
			\draw (-4,0) -- (-2,1);
			\draw[dashed] (-2,1) -- (0,2);
			\end{tikzpicture}
			\hspace*{.6cm}
			\begin{tikzpicture}[scale=.35]
			\filldraw[black] (-4,0) circle (5pt) node[anchor=north east] {$b$};
			\filldraw[black] (4,0) circle (5pt) node[anchor=north west] {$c$};
			\filldraw[black] (0,2) circle (5pt) node[anchor=south east] {$d$};
			\filldraw[black] (0,6) circle (5pt) node[anchor=south east] {$a$};
			\filldraw[black] (0,4) circle (5pt);
			\filldraw[black] (0,0) circle (5pt);
			\filldraw[black] (-2,1) circle (5pt);
			\filldraw[black] (2,1) circle (5pt);
			\draw[black] (-4,0) -- (0,6) -- (4,0) -- (0,0);
			\draw (-4,0) -- (-2,1);
			\draw[dashed] (-2,1) -- (0,2);
			\draw[dashed] (0,4) -- (0,6);
			\draw[black] (0,2) -- (0,4);
			\draw[dashed] (-4,0) -- (0,0);
			\draw[dashed] (0,2) -- (2,1);
			\draw (2,1) -- (4,0);
			\end{tikzpicture}
		\end{center} 
		\vspace*{-.6cm}
		\caption{Subdivisions of $K_4 $, in order from left to right: type 2, type 3, and type 4. Only dashed edges can be subdivided. 
		} \label{fig:sd-of-K4}
	\end{figure}
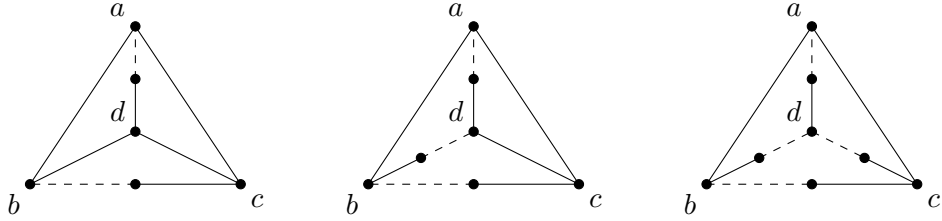
	
	A \emph{wheel} consists of a hole~$ C $ and a vertex~$ v $
        which has at least~3 neighbors in $ C $. 
        As explained in \cite{Part2} (Theorem 7.2), wheels are not Burling graphs. 
\subsection*{Star cutsets}
	
A \emph{full in-star cutset} in an oriented graph $G$ is a set
$S = N^{-} [v]$ for some vertex $v \in V (G)$ such that $G \sm S$ is
disconnected. In such case, we say that the start cutset is
\emph{centered} at $ v$.

An \emph{in-tree} is any oriented graph obtained from a rooted tree
$(T, r)$ by orienting every edge towards the root.  Formally, $e=uv$
is oriented from $u$ to $v$ if an only if $v$ is on the unique path of
$T$ from $u$ to $r$.  Notice that in an in-tree, every vertex but the
unique sink, has a unique out-neighbor. An \emph{in-forest} is an
oriented forest whose connected components are in-trees. A \emph{leaf}
in an in-tree is a source with exactly one out-neighbor (so, by
definition, the root is not a leaf, even if it has degree~1).
	
An \emph{oriented chandelier} is any oriented graph $G$ obtained
from an in-tree $G'$ with at least~2 leaves by adding a vertex $v$ and
all arcs $uv$ where $u$ is a leaf of $G'$.  Observe that $v$ is a sink
and all its neighbors are sources of degree~2.  We call here
\emph{non-oriented} chandelier the underlying graph of an oriented
chandelier.  Note that in~\cite{Chalopin2014}, \emph{chandelier} has a
slightly different meaning. 
		
\begin{theorem}[\cite{Part2}, Theorem
  5.5] \label{thm:starcutset-chandelier-degree1} If $G$ is an oriented
  Burling graph, then $G$ has a full in-star cutset, or $G$ is an
  oriented chandelier or $G$ contains a vertex of degree at most 1.
\end{theorem}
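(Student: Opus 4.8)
The plan is to work with a Burling tree $T=(T,r,\lastBorn,\choosePath)$ from which the oriented graph $G$ is derived, so that $V(G)\subseteq V(T)$ and $uw\in A(G)$ exactly when $w\in\choosePath(u)$. Two structural features of $\choosePath$ drive everything. First, the out-neighbours of a vertex $u$ all lie on a single downward path of $T$ beginning at the last-born sibling of $u$ (and $u$ has no out-neighbour at all when it is a last-born or the root); in particular an out-neighbour of $u$ is never a descendant of $u$, and from this one checks immediately that $G$ has no directed $2$-cycle, i.e.\ $G$ is genuinely an oriented graph. Second — and this is what I would isolate as the main lemma — for the subtree $T_u$ rooted at any $u\in V(T)$ and any $x\notin T_u$, if $x$ has an out-neighbour in $T_u$ then in fact $xu\in A(G)$. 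Indeed $\choosePath(x)$ is a downward path starting at $\lastBorn(p(x))$; if this starting vertex lies in $T_u$ it must equal $u$ (otherwise $x$ itself would lie in $T_u$), and if it lies outside $T_u$ then the path can reach $T_u$ only through its unique entrance vertex $u$; either way $u\in\choosePath(x)$. Before running the argument I would dispose of the trivial reductions: if $G$ has a vertex of degree at most~$1$ we are done, and if $G$ is disconnected a full in-star cutset is produced at once, so I assume $G$ connected with minimum degree at least~$2$.

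From the lemma I would extract a clean cutset criterion. Call $u\in V(G)$ \emph{branching} if some vertex of $V(G)$ lies strictly below $u$ in $T$. The lemma has three consequences: out-arcs leaving $T_u\sm\{u\}$ stay inside $T_u$; the vertex $u$ has no neighbour at all inside $T_u\sm\{u\}$; and every arc entering $T_u\sm\{u\}$ from outside $T_u$ originates in $N^-(u)$. Hence if $u$ is branching and moreover $V(G)\not\subseteq T_u\cup N^-(u)$, then $S=N^-[u]$ is a full in-star cutset centered at $u$: the nonempty set $(T_u\sm\{u\})\cap V(G)$ is separated in $G\sm S$ from the (nonempty) remainder $V(G)\sm(T_u\cup N^-(u))$.

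The heart of the proof is then to show that, when $G$ has no full in-star cutset, there is at most one branching vertex. If $u_1,u_2$ were two incomparable branching vertices, pick a vertex $w_1\in V(G)$ strictly below $u_1$; failure of the criterion at $u_2$ forces $w_1\in N^-(u_2)$, i.e.\ $w_1u_2\in A(G)$, but an out-neighbour of $w_1$ must be a descendant of a sibling of $w_1$ and hence lie in $T_{u_1}$, contradicting $u_2\notin T_{u_1}$. If instead $u_1\prec u_2$ were comparable branching vertices, failure of the criterion at $u_2$ forces $u_1\in N^-(u_2)$, i.e.\ $u_1u_2\in A(G)$ with $u_2$ a descendant of $u_1$, contradicting the first structural feature. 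So at most one branching vertex survives.

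Finally I would read off the conclusion from this degenerate configuration. With a single branching vertex $a$, every other vertex of $V(G)$ is below $a$ or incomparable to it, $a$ dominates $V(G)$ in the sense $V(G)\subseteq T_a\cup N^-(a)$, and since $a$ has no neighbour strictly inside $T_a$, any out-neighbour of $a$ would have to lie in $N^-(a)$ and create a digon; as $G$ has none, $a$ is a sink. Its in-neighbours, together with the downward paths they determine, then assemble into an in-tree hanging below $a$, which is exactly the oriented chandelier with $a$ playing the role of the added sink. The delicate point — and what I expect to be the real obstacle — is precisely this last step: matching the no-cutset configuration to the chandelier on the nose, treating the remaining antichain case in which there is no branching vertex at all, carrying out the book-keeping of vertex degrees, and in every configuration that is not literally a chandelier exhibiting either a vertex of degree at most~$1$ or, after all, a full in-star cutset.
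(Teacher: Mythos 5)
Your framework is sound as far as it goes: the entry lemma (an arc from $x\notin T_u$ into $T_u$ forces $u\in\choosePath(x)$), the resulting criterion that any branching vertex $u$ with $V(G)\not\subseteq T_u\cup N^-(u)$ yields a full in-star cutset $N^-[u]$, and the deduction that absence of full in-star cutsets leaves at most one branching vertex are all correct. The genuine gap is that the theorem's trichotomy is never actually derived from this configuration; what you set aside as ``the delicate point'' is roughly half of the proof and needs ideas you have not stated. Concretely: (1) the no-branching-vertex case is untreated. There $V(G)$ is an antichain of $T$, so each $\choosePath(x)$, being a downward path, meets $V(G)$ at most once, and all out-degrees are at most $1$; but to conclude that $G$ is an in-forest (hence has a vertex of degree at most $1$) you also need that oriented derived graphs have no directed cycles, a fact you never establish --- with out-degree at most $1$ alone, a component could be a directed cycle with in-trees attached. (2) In the one-branching-vertex case, ``its in-neighbours \dots assemble into an in-tree'' is asserted, not proved, and it is not yet forced by what precedes it. You need: that the vertices of $G$ strictly below $a$ form an antichain (uniqueness of the branching vertex again), hence have out-degree at most $1$; that there are no arcs inside $N^-(a)$ (the head of such an arc would be a strict ancestor of $a$, hence a second branching vertex); acyclicity as above, so that $G\sm\{a\}$ is an in-forest; minimum degree $\geq 2$ to force every leaf of that forest into $N^-(a)$ (a leaf lying strictly below $a$ has no arc to $a$ or to $N^-(a)$, so its degree in $G$ would be at most $1$) and to force $|N^-(a)|\geq 2$; and the no-cutset hypothesis once more to show the forest is connected --- if it had two components, each necessarily meeting the part below $a$, then $N^-[a]$ itself would be a full in-star cutset. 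Only after all of these steps is $G$ literally an oriented chandelier.

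None of these missing steps is long, and your skeleton supports each of them, but as submitted the argument proves only ``no full in-star cutset implies at most one branching vertex,'' which is strictly weaker than the theorem: the configuration you stop at (a sink $a$ dominating $V(G)$) is genuinely more general than a chandelier, and ruling out the extra configurations is exactly where the other two outcomes of the statement (a vertex of degree at most $1$, or a cutset after all) have to be produced.
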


Observe that in any chandelier (oriented or not), there exist a vertex
($ v $ in the definition above) which is contained in every cycle of
the graph.  The following is observed in~\cite{Part2}, as a direct
consequence of the definition of \textit{k-sequential graphs}. See
Section~4 of~\cite{Part2}. In fact, chandeliers are 2-sequential
graphs, a subclass of Burling graphs.

\begin{lemma} \label{thm:chandeliers-are-Burling} Every (non-oriented)
  chandelier is a Burling graphs.
\end{lemma}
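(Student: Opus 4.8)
By Theorem~\ref{thm:BG=derived} it is enough to show that every non-oriented chandelier $G$ is a (non-oriented) derived graph, i.e.\ that $G$ is the underlying graph of some oriented graph derived from a Burling tree. Fix the orientation making $G$ an oriented chandelier: it is built from an in-tree $G'$ with sink $s$ and at least two leaves by adding the apex $v$ and all arcs $uv$ with $u$ a leaf of $G'$. For a non-sink vertex $x$ of $G'$ let $\pi(x)$ denote its unique out-neighbour in $G'$. The arcs to be realised are precisely the $x\,\pi(x)$ for all non-sink $x$, together with the $uv$ for all leaves $u$; both $s$ and $v$ must become sinks.

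My plan is to construct such a Burling tree $T$ explicitly and then read $G$ off the choose-function, using that in a derived graph the out-neighbours of $x$ are exactly the vertices of the branch $\choosePath(x)$, which issues from $\lastBorn(p(x))$, and that a vertex with empty choose-path (in particular a last-born or a leaf of $T$) is a sink. The mechanism is clearest in the base case where $G'$ is a star with centre $s$ and leaves $u_1,\dots,u_m$: take $T$ to be a root $r$ with children $u_1,\dots,u_m$ and an extra last-born child $z=\lastBorn(r)$, place below $z$ the downward path $z\to s\to v$, and set $\choosePath(u_i)=\{z,s,v\}$ for each $i$. Discarding the single shadow vertex $z$ then leaves exactly the arcs $u_i s$ and $u_i v$, so $G=K_{2,m}$ is derived. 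In general I would seat $v$ at the bottom of one distinguished downward branch of $T$, keep every in-tree vertex as a real vertex, and arrange the tree so that each required target $\pi(x)$ — and also $v$, when $x$ is a leaf — lies on the branch $\choosePath(x)$, inserting shadow last-born vertices to separate branches and to give $s$ and $v$ empty choose-paths.

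The main obstacle is to carry out this seating coherently for the whole in-tree at once, and it is genuinely constrained. A leaf $u$ must emit two arcs, so $\pi(u)$ and $v$ are forced onto a single downward branch of $T$; and any vertex of $G'$ that is the target of several arcs (for instance $s$, which receives one arc from each of its in-tree children) forces the corresponding last-born parents to be nested along one path of $T$, with the shared target below all of them. At the same time no unintended real vertex may fall on a choose-branch, lest spurious edges appear. I would organise the construction as an induction, building $T$ from the bottom upward along a linear order of $V(G')$ in which each vertex precedes its children and $v$ is placed lowest, maintaining the invariant that the branch through $v$ built so far exposes exactly the already-processed targets in the required nested order; checking that a newly attached non-last-born child can reach precisely $\pi(x)$ (and $v$, for a leaf) without meeting any other real vertex is the delicate, though routine, core.

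Alternatively, and more quickly, one may use the route signalled by the surrounding text: a chandelier is immediately seen to be $2$-sequential, and $k$-sequential graphs are proved to be Burling in~\cite{Part2}; this yields the statement without the explicit tree above.
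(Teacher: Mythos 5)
Your closing paragraph is, in fact, the paper's entire proof: the lemma is presented there as an observation following from Section~4 of~\cite{Part2} (chandeliers are $2$-sequential, and $k$-sequential graphs are Burling), so that route is correct and coincides exactly with what the paper does. The problem lies in the explicit construction that forms the bulk of your proposal: it is not only left unfinished at what you call its ``delicate, though routine, core,'' but the plan it follows is organized around a picture that is provably upside down, so the induction you describe could not be completed as stated.

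Concretely, you propose to ``seat $v$ at the bottom of one distinguished downward branch of $T$'' and to build $T$ bottom-up ``with $v$ placed lowest.'' This is impossible as soon as the in-tree $G'$ has two leaves with distinct parents. Take $G'$ with sink $s$, an internal vertex $a$, and leaves $u_1,u_2$ with $\pi(u_1)=a$ and $\pi(u_2)=s$. In any realization of the chandelier as an oriented derived graph, the $5$-hole on $u_1,a,s,u_2,v$ has sources $u_1,u_2$, and its pivot must be a common neighbour of these two sources that is an ancestor in $T$ of every vertex of the hole except them; the unique common neighbour of $u_1$ and $u_2$ in this hole is $v$. Hence $v$ is an ancestor of both $a$ and $s$ in any realizing Burling tree, i.e.\ $v$ must sit near the top of $T$, not at the bottom. (Your base case survives only because in $K_{2,m}$ both sinks are common neighbours of all sources, so $s$ can play the role of pivot there.) Your invariant that the branch through $v$ ``exposes exactly the already-processed targets in the required nested order'' fails for a related reason: two real targets nested on one branch generally force spurious arcs, since a leaf whose choose-path must reach the lower target picks up the upper one as an unwanted out-neighbour. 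A workable layout is essentially the opposite of your plan: put the leaves of $G'$ as children of the root of $T$, then a shadow last-born, then $v$; below $v$, hang the internal vertices of $G'$ pairwise incomparably off a spine of shadow last-borns, layered by their height in $G'$, so that each choose-path runs down the shadow spine and meets exactly one real target (together with $v$ in the case of a leaf of $G'$).
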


\subsection*{Holes}
	
A \emph{hole} in a graph G is a chordless cycle of length at
least~4. A hole of an oriented graph is any hole of its underlying
graph.  In oriented derived graphs, not only the orientations of holes
are specific, but also they interact in a constrained way.  As
explained in~\cite{Part2} section 6, every hole $H$ in a graph derived
from a Burling tree $T$ has four \emph{special} vertices that we here
describe:

\begin{itemize}
  \item two sources called the
  \emph{antennas}, 
\item one common neighbor of the antennas that is also an ancestor in
  $T$ of all the vertices but the antennas, called the \emph{pivot},
  \item one sink distinct from the pivot, called the \emph{bottom}.
\end{itemize}

Each of the other vertices lie one of the directed paths of $H$ from
an antenna to the bottom. A \emph{subordinate} vertex of a hole
is any vertex distinct from its pivot and antennas (in particular, the
bottom is subordinate and is therefore a descendant of the pivot).  An \emph{extremum} of a hole is any sink or source of it, and a
\emph{transitive vertex} of a hole is any vertex of it with one
in-neighbor and one out-neighbor. So every vertex of a hole is either
a transitive vertex or an extremum.  We often refer to the following
sum up.

\begin{lemma} \label{thm:holes-4-extrema}
  Every hole in an oriented derived graph has exactly two sinks and
  two sources, and the two sources have a common neighbor (which is
  one of the sinks).
\end{lemma}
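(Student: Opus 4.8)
The plan is to deduce the statement directly from the structural description of holes in oriented derived graphs recalled just above (established in~\cite{Part2}, Section~6). For an arbitrary hole $H$ that description already supplies two antenna-sources, a pivot that is their common neighbour, and a bottom that is a sink distinct from the pivot, together with the fact that every remaining vertex of $H$ lies on one of the two directed paths of $H$ running from an antenna to the bottom. So the remaining task is purely combinatorial: read off the orientation at each vertex of $H$ and count its extrema.

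First I would pin down the sinks. Since $H$ is a hole, the pivot has exactly two neighbours in $H$, and as it is the common neighbour of the two antennas these two neighbours are precisely the antennas. Both antennas are sources, so each of the two arcs joining an antenna to the pivot is oriented towards the pivot; hence the pivot has in-degree $2$ in $H$ and is a sink. The bottom is a sink by definition. This already exhibits two sources (the antennas) and two sinks (the pivot and the bottom).

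Next I would rule out any further extrema. Deleting the pivot from $H$ leaves a path whose ends are the two antennas and which passes through the bottom; thus $H$ is the union of the two directed antenna-to-bottom paths together with the two arcs entering the pivot. Along each such directed path every internal vertex has exactly one in-neighbour and one out-neighbour, that is, it is a transitive vertex, hence neither a source nor a sink. Consequently the only sources of $H$ are the two antennas and the only sinks are the pivot and the bottom, which is the first assertion; the second assertion is then immediate, since the two sources are the antennas, whose common neighbour is the pivot, a sink.

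The substantive content is carried by the imported description, so within this lemma the only points needing care are the two I have isolated: that the pivot genuinely receives \emph{both} antenna-arcs (so it is a sink, not merely a common neighbour), and that each arm, being a consistently oriented path between a source and the bottom, hides no interior extremum. I do not expect a real obstacle here, but for a self-contained check of the count one may verify directly that a hole cannot be a directed cycle: an arc $u\to v$ means $v$ lies on the branch $\choosePath(u)$ issuing from $\lastBorn(p(u))$, so $v$ is a descendant of a child of $p(u)$ and hence has depth at least that of $u$ in $T$, while $u$, having $\choosePath(u)\neq\varnothing$, is not a last-born. Around a directed cycle the depths could not strictly increase, forcing every step to satisfy $v=\lastBorn(p(u))$, i.e.\ every vertex to be a last-born — contradicting $\choosePath(w)=\varnothing$ for a last-born $w$, since each vertex also carries an out-arc. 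This guarantees at least one source and one sink even before the finer analysis and excludes the degenerate count.
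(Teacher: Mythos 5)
Your proof is correct and takes essentially the same route as the paper: the paper gives this lemma no separate proof at all, presenting it as a ``sum up'' of the antenna/pivot/bottom description of holes imported from Part~II, Section~6, and your argument is exactly the explicit bookkeeping that summary leaves implicit (the pivot, having the two antennas as its only neighbours in the hole, receives both arcs and is a sink; interior vertices of the two directed antenna-to-bottom paths are transitive). Your closing self-contained argument that a hole cannot be a directed cycle, via depths in the Burling tree, is correct but not needed once the imported description is granted.
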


Now we can summarize some theorems about the interaction of holes in
oriented derived graphs. In what follows, by connecting two
vertices by a path of length 0, we mean identifying the two vertices.
	
A \emph{dumbbell} is a graph made of path $P = x \dots x'$ (possibly
$x = x'$), a hole $H$ that goes through $x$ and a hole $H'$ that goes
through $x'$. Moreover $ V(H) \cap V(P) = \{x\}$,
$ V(H) \cap V(P') = \{x'\}$, $ V(H) \cap V(H') = \{x\} \cap \{x'\} $,
and there are no edges other than the edges of the path and the edges
of the holes. Intuitively, a dumbbell consists of two holes $ H$ and
$ H' $ where one specific vertex of $ H $ is connected by a path $ P $
of length at least~0 to one specific vertex of $ H'$.

\begin{lemma}[\cite{Part2}, Lemma 6.2] \label{thm:dumbbell} Suppose a
  dumbbell with holes $H$, $H'$ and path $P = x \dots x'$ as in the
  definition is the underlying graph of some oriented derived graph
  $G$. Then in $ G$, either $x$ is not a subordinate vertex of $ H $,
  or $ x' $ is not a subordinate vertex of $H'$.
\end{lemma}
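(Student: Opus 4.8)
The plan is to argue by contradiction: I assume that $x$ is subordinate in $H$ and that $x'$ is subordinate in $H'$, and I will derive that the two pivots are each a proper ancestor of the other, which is absurd. Throughout I fix a Burling tree $T$ from which the oriented graph $G$ is derived, write $p(\cdot)$ for the parent map in $T$, and for a vertex $v$ let $T_v$ denote the set of descendants of $v$ in $T$ (with $v \in T_v$). Let $q$ and $q'$ be the pivots of $H$ and $H'$ respectively. By the description of the special vertices of a hole preceding Lemma~\ref{thm:holes-4-extrema}, the pivot is an ancestor of every non-antenna vertex of its hole; since $x$ and $x'$ are subordinate, $q$ is a \emph{proper} ancestor of $x$ and $q'$ is a proper ancestor of $x'$, i.e.\ $x \in T_q \sm \{q\}$ and $x' \in T_{q'} \sm \{q'\}$. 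The one elementary fact I need about the choose function is: if $uv \in A(G)$, then $v$ lies on a branch of $T$ issued from the last-born of $p(u)$; in particular $v$ is a descendant of $p(u)$, and the branch is a downward path, so it contains every vertex of $T$ between its top and $v$.

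First I would record a placement observation about $q'$: its only neighbors in $G$ are the two antennas of $H'$, and by the disjointness clauses in the definition of a dumbbell (namely $V(H)\cap V(H') = \{x\}\cap\{x'\}$ and $V(H)\cap V(P)=\{x\}$) those antennas lie neither in $V(H)$ nor in $V(P)$. Hence $q' \notin V(H)\cup V(P)$ and \emph{no} vertex of $H\cup P$ is adjacent to $q'$. The symmetric statement holds for $q$ with respect to $H'\cup P$.

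The heart of the argument is the following propagation claim: \emph{if $ww'$ is an edge of the dumbbell with $w$ a proper descendant of $q'$ and $w'$ not adjacent to $q'$, then $w' \in T_{q'}$.} I would prove it by splitting on the orientation. If $w \to w'$, then $w'$ is a descendant of $p(w)$; as $w$ is a proper descendant of $q'$ we have $p(w) \in T_{q'}$, whence $w' \in T_{q'}$. If instead $w' \to w$, then $w$ lies on a branch issued from $z := $ last-born of $p(w')$; this $z$ is an ancestor of $w$, hence comparable with $q'$ (both being ancestors of $w$). If $z$ is an ancestor of $q'$, the branch, being a descending path through the proper descendant $w$ of $q'$, must also pass through $q'$, so $q' \in c(w')$ and $w'q' \in E(G)$ --- contradicting that $w'$ is not adjacent to $q'$. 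Therefore $z$ is a proper descendant of $q'$, so $p(w') = p(z) \in T_{q'}$ and again $w' \in T_{q'}$. This reversed-arc case is exactly where I expect the difficulty to sit: it is the step that rules out the path or the hole ``escaping'' the pivot's subtree through an in-arc, and it is where the branch structure of $c$ and the no-extra-edges condition of the dumbbell genuinely interact.

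Finally I would run the propagation along the connected graph $H\cup P$, starting from the seed $x' \in V(P)\cap (T_{q'}\sm\{q'\})$: since by the placement observation no vertex of $H\cup P$ is adjacent to $q'$ and $q' \notin V(H)\cup V(P)$, an induction along a spanning tree of $H\cup P$ keeps every vertex inside $T_{q'}$, giving $V(H)\cup V(P) \subseteq T_{q'}$ and in particular $q \in T_{q'}$, so $q'$ is an ancestor of $q$. By the symmetric application (swap the roles of $(H,q)$ and $(H',q')$, seeding at $x$) I get $q' \in T_q$, so $q$ is an ancestor of $q'$. Since $V(H)\cap V(H')$ is either empty or $\{x\}$ and neither pivot equals $x$, we have $q \neq q'$, and the two ancestry relations are contradictory. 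The degenerate case $x=x'$ (a path of length $0$) is covered by the very same computation with $P$ trivial, so no separate treatment is needed.
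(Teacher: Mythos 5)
There is nothing in this paper to compare your attempt against: the statement is imported from Part~II (it is stated as \cite{Part2}, Lemma~6.2 and never proved in this article), so your proof can only be judged on its own terms, using the definitions and the description of holes that this paper does supply. On those terms it is correct. The placement observation holds: since $H'$ is chordless and the pivot $q'$ is a common neighbour of the two antennas, the two neighbours of $q'$ on $H'$ are exactly the antennas, and the disjointness clauses $V(H)\cap V(P)=\{x\}$, $V(H')\cap V(P)=\{x'\}$, $V(H)\cap V(H')=\{x\}\cap\{x'\}$ put both $q'$ and its neighbours outside $V(H)\cup V(P)$ (here you correctly use that $x'$, being subordinate, is neither the pivot nor an antenna of $H'$). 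Your propagation claim is the real content, and both orientation cases are sound: the forward case needs only that $\choosePath(w)$ consists of descendants of $p(w)$, and the reversed case correctly plays the two comparable ancestors $z$ and $q'$ of $w$ against each other, using that a branch issued from an ancestor of $q'$ and reaching the proper descendant $w$ must pass through $q'$, which would force the arc $w'q'$ in the induced subgraph $G$ and contradict non-adjacency (note also that $w'$ having an out-arc guarantees it is neither the root nor a last-born, so $p(w')$ and its last-born exist). The seed $x'\in T_{q'}\setminus\{q'\}$ is exactly what subordination gives, the induction along the connected graph $H\cup P$ preserves the invariant because no vertex of $H\cup P$ equals or is adjacent to $q'$, the symmetric run yields the contradictory pair of proper-ancestry relations between $q$ and $q'$, and the case $x=x'$ indeed needs no separate treatment. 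In effect you have re-derived, directly from the Burling-tree definition, a subtree-separation principle (a connected subgraph avoiding $N^-[q']$ cannot cross the boundary of the subtree rooted at $q'$) and applied it twice; making that tool explicit inline is what keeps the argument self-contained, at the cost of re-proving machinery of the kind Part~II develops once and reuses.
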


A \emph{domino} is a graph made of one edge $xy$ and two holes $H_1$
and $H_2$ that both go through $xy$. Moreover
$V(H_1) \cap V(H_2) = \{x, y\}$ and there are no other edges than the
edges of the holes. Intuitively, a domino consists of two holes
$ H_1 $ and $H_2 $ which have a common edge $ xy$.
	
\begin{lemma}[\cite{Part2}, Lemma 6.3] \label{thm:domino} Suppose a
  domino with holes $H_1$, $H_2$ and edge $xy$ as in the definition is
  the underlying graph of some oriented derived graph $G$. Then for
  some $z\in \{x, y\}$ and some $i\in \{1, 2\}$, $z$ is the pivot of
  $H_i$ and $z$ is a subordinate vertex of $H_{3-i}$.
\end{lemma}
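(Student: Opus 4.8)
The plan is to orient the shared edge and then track the single vertex that can possibly be a pivot. Since $G$ is oriented, the edge $xy$ carries one orientation in $G$, hence the same orientation inside both $H_1$ and $H_2$; assume without loss of generality that it is $x \to y$. By Lemma~\ref{thm:holes-4-extrema} the pivot of a hole is one of its two sinks, so it has no out-neighbour inside that hole; as $x$ has the out-neighbour $y$ in each of $H_1, H_2$, the vertex $x$ is the pivot of neither hole. Dually, $y$ has an in-neighbour in each hole, so $y$ is a source (antenna) of neither hole, and therefore in each of $H_1, H_2$ the vertex $y$ is either the pivot or a subordinate vertex. Consequently the only candidate for the vertex $z$ of the statement is $y$, and the desired conclusion is \emph{equivalent} to the assertion that $y$ is the pivot in exactly one of $H_1, H_2$.

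With this reformulation the proof reduces to excluding the two ``constant'' cases: $y$ is the pivot of both holes, and $y$ is subordinate in both holes. The main tool for both exclusions is the interval (convexity) property of the choose-sets: for a vertex $u$, the set $\choosePath(u)$ is the vertex-set of a downward branch of $T$, so whenever $p, q \in \choosePath(u)$ with $p$ an ancestor of $q$, every vertex of $T$ on the path from $p$ to $q$ also lies in $\choosePath(u)$ and is thus an out-neighbour of $u$ in the fully derived graph. I will combine this with the recalled fact that the pivot of a hole is an ancestor in $T$ of every non-antenna vertex of that hole, and with the observation that any arc $s\to t$ joins two $T$-incomparable vertices (since $t\in\choosePath(s)$ sits below a sibling of $s$).

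To exclude ``pivot of both'', note that then $x$ is an antenna of each hole, and the two in-neighbours of $y$ in $H_i$ are exactly the two antennas of $H_i$; thus $y$ has private antenna-neighbours $a\in V(H_1)$ and $a'\in V(H_2)$ with $a\to y$ and $a'\to y$. Let $u_1\in V(H_1)$ and $u_2\in V(H_2)$ be the first vertices on the two directed paths leaving $x$. Both lie in $\choosePath(x)$ below $y$, hence are comparable; say $u_1$ is an ancestor of $u_2$. The aim is then to apply convexity to the choose-set of a \emph{private} antenna so as to force an arc from a private vertex of one hole to a private vertex of the other, contradicting that $V(H_1)\cap V(H_2)=\{x,y\}$ and that the domino has no edges beyond those of the two holes. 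In the base case of two $4$-holes this is immediate: there $u_1,u_2$ are the two bottoms, and $y,u_2\in\choosePath(a')$ with $u_1$ on the $T$-path from $y$ to $u_2$ forces $a'\to u_1$. The ``subordinate in both'' case is treated symmetrically: there $y$ is a strict descendant of both pivots $p_1,p_2$, which are therefore $T$-comparable and private to their respective holes, and one again uses convexity of a suitably chosen choose-set to produce a forbidden arc between the private parts of $H_1$ and $H_2$.

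The routine part is the bookkeeping of orientations and checking that the vertices produced are genuinely private (distinct from $x$ and $y$). The hard part will be upgrading the forced-arc argument from $4$-holes to holes of arbitrary length: a directed antenna-to-bottom path is \emph{not} a branch of $T$ (its consecutive vertices are incomparable), so the comparable cross-pair one controls for free lies on $\choosePath(x)$, the shared antenna, and does not by itself yield a private-to-private arc. Closing both excluded cases therefore seems to require the explicit tree-embedding of a hole established in~\cite{Part2}, which should pin down the comparabilities among the first path-vertices $u_1, u_2$ and the private first path-vertices of $a, a'$ precisely enough to place one private vertex of $H_1$ on a $T$-path contained in a private antenna's choose-set of $H_2$ (or vice versa). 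I expect that embedding description, together with the convexity property, to be exactly what finishes the argument.
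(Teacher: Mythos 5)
Your opening reduction is correct and is the natural first move: since the pivot of a hole is one of its sinks (it is the common neighbour of the two sources), and $x$ has the out-neighbour $y$ in both holes, $x$ is the pivot of neither; dually $y$ has the in-neighbour $x$ in both holes, so $y$ is an antenna of neither, and the lemma is indeed equivalent to showing that $y$ is the pivot of exactly one of $H_1$, $H_2$. Your exclusion of the case ``$y$ is the pivot of both'' is also valid \emph{when both holes have length $4$}: there the private antennas $a$, $a'$ have out-neighbourhoods $\{y,u_1\}$ and $\{y,u_2\}$ inside their holes, so convexity of the branch $\choosePath(a')$ (or of $\choosePath(a)$, depending on which of $u_1,u_2$ is the ancestor) forces an arc of the fully derived graph between a private vertex of $H_1$ and a private vertex of $H_2$, contradicting that $G$ is an induced subgraph of the fully derived graph and that the domino has no such edge.

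However, what remains is not ``routine bookkeeping'' but the entire content of the lemma, and you leave it open. First, for holes of arbitrary length your forced-arc argument fails exactly where you flag it: the out-neighbours of $a'$ inside $H_2$ are $y$ and the first vertex of $a'$'s \emph{own} directed path, not $u_2$, so you have no two vertices of $\choosePath(a')$ sandwiching $u_1$; making this work requires the precise description of how a directed antenna-to-bottom path sits inside the Burling tree, which you invoke (``the explicit tree-embedding of a hole established in~\cite{Part2}'') but never state, let alone apply. Second, the case ``$y$ subordinate in both holes'' is not argued at all: beyond the correct observations that the two pivots $p_1,p_2$ are $T$-comparable, distinct, and private, the phrase ``one again uses convexity of a suitably chosen choose-set'' names no choose-set and produces no arc; it is a hope, not a proof. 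Note that the present paper does not prove this lemma either --- it imports it from Part II (Lemma 6.3) --- and the facts it recalls (Lemma~\ref{thm:holes-4-extrema} and the antenna/pivot/bottom description) are genuinely insufficient by themselves, which is consistent with your difficulty: completing your plan would essentially amount to re-deriving the hole-embedding lemmas of Part II, so as it stands the proposal is a correctly oriented plan with the two decisive cases closed only in a special case or not at all.
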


A \emph{theta} is a graph made of three internally vertex-disjoint
paths of length at least~2, each linking two vertices $u$ and $v$
called the \emph{apexes} of the theta (and such that there are no
other edges than those of the paths).  A \emph{long theta} is a theta
such that all the paths between the two apexes of the theta have
length at least~3.

\begin{lemma}[\cite{Part2}, Lemma 6.4]
  \label{lem:theta-pivot}
  If $ G $ is a derived graph whose underlying graph is a long theta
  with apexes $ u $ and $ v $, then exactly one of $ u $ and $ v $ is
  the pivot of every hole of~$ G $.
\end{lemma}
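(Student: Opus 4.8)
The plan is to work with the three holes of the theta and pin down their pivots. Write $P_1,P_2,P_3$ for the three paths and $H_{ij}=P_i\cup P_j$; these are the only holes, since any cycle meets each apex in an even number of its three incident edges and hence consists of exactly two of the paths. By \Cref{thm:holes-4-extrema} each $H_{ij}$ has exactly two sources sharing a common neighbour, namely the sink called its pivot, and that pivot is an ancestor in $T$ of every vertex of the hole except the two sources. I would first record two easy facts about the \emph{long} theta. Since every path has length at least $3$, no vertex is adjacent to both $u$ and $v$, so $u$ and $v$ have no common neighbour and therefore cannot be the two sources of any hole (the \emph{no-double-apex} fact). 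Second, as the only vertices of degree $\ge 3$ are $u$ and $v$, a common neighbour of two distinct vertices is either an apex, or a degree-$2$ internal vertex whose two path-neighbours are exactly those two vertices.

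The argument then splits into two goals: (a) the pivot of each hole is one of $u,v$, and (b) it is the same apex for all three holes. Granting (a), goal (b) is short: if an apex $x$ is the pivot of some hole, then the other apex also lies in that hole and, by no-double-apex, is not a source; hence it is subordinate and so a strict descendant of $x$, i.e.\ $x$ is an ancestor of the other apex. Two distinct vertices cannot each be an ancestor of the other, so all the apex-pivots produced by (a) must coincide in a single apex, which is then the pivot of every hole while the other apex is the pivot of none --- exactly the statement.

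The core is goal (a), which I would prove by contradiction. Suppose some hole, say $H_{12}$, has a pivot $p$ that is not an apex. By the second fact, $p$ is internal to one of its paths, say $P_1$, and the two sources of $H_{12}$ are the two neighbours of $p$ on $P_1$. The key observation is that the source/sink status of a degree-$2$ vertex is hole-independent, as it only depends on its two fixed incident edges; in particular the interior sources of $P_1$ are common to $H_{12}$ and $H_{13}$. When both sources of $H_{12}$ are interior to $P_1$, they are therefore also the two sources of $H_{13}$, so no internal vertex of $P_2$ or of $P_3$ is a source. Consequently the two sources of the remaining hole $H_{23}$, which lie in $\{u,v\}\cup\operatorname{int}(P_2)\cup\operatorname{int}(P_3)$, are forced to be $u$ and $v$, contradicting no-double-apex.

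The hard part, and the main obstacle, is the boundary case in which $p$ is adjacent to an apex, so that one source of $H_{12}$ is itself an apex (with an analogous subtlety when a path has length exactly $3$). There the clean source-tracing can break, and I expect to finish through the ancestor structure instead: an internal pivot $p$ is an ancestor of every non-source vertex of its hole, hence of at least one apex, whereas chasing the shared path $P_1$ into $H_{13}$ forces that same apex --- or another internal vertex that is itself an ancestor of $p$ --- to be the pivot of $H_{13}$ and thus an ancestor of $p$. This produces the forbidden mutual-ancestor relation and closes the case. Combining the two contradiction engines, ``two sources with no common neighbour'' via \Cref{thm:holes-4-extrema} and ``mutual ancestors'' via the pivot's ancestor property, over the finitely many positions of $p$ establishes (a), and together with (b) completes the proof.
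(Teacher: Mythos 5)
Note first that this paper does not actually prove Lemma~\ref{lem:theta-pivot}: it is imported from Part~II, so there is no in-paper proof to compare you against, and your argument has to stand on its own. Judged that way, your overall strategy is viable: the three holes are exactly the $P_i\cup P_j$, degree-2 vertices have hole-independent source/sink status, and the two ``engines'' (Lemma~\ref{thm:holes-4-extrema} plus the pivot's ancestor property) are enough to finish. Your step (b) and your case where both antennas of $H_{12}$ are interior to $P_1$ are correct. One slip in (b): the other apex $y$ fails to be a source not ``by no-double-apex'' --- that fact only forbids $u$ and $v$ from being the \emph{two} sources simultaneously, and here the apex $x$ is a sink, so it says nothing about $y$ --- but because the two sources of a hole are its antennas, which are adjacent to the pivot $x$, while $y\not\sim x$ in a long theta. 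The conclusion is right and the fix uses only facts you already recorded.

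The genuine gap is in the boundary case of (a), which you flag as the hard part but then only sketch, and the sketch's enumeration is wrong. Suppose the pivot $p$ of $H_{12}$ is interior to $P_1$ and adjacent to the apex $u$, so the antennas of $H_{12}$ are $u$ and the other neighbour $q$ of $p$ on $P_1$. Since $q$ has degree 2, it is also a source of $H_{13}$, and the pivot of $H_{13}$, being adjacent to its sources, is either $p$ or the other neighbour $q'$ of $q$ on $P_1$. Your sketch allows only ``that same apex or another internal vertex that is an ancestor of $p$'', which is essentially the case $q'$ (with $q'=v$ when $P_1$ has length 3); there mutual ancestry does finish: $q'$ is not an antenna of $H_{12}$, so $p$ is an ancestor of $q'$, while $p$ is not an antenna of $H_{13}$, so $q'$ is an ancestor of $p$, a contradiction. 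But you have omitted the sub-case where the pivot of $H_{13}$ is $p$ itself, where no mutual-ancestor pair exists and the ancestor engine is powerless. That sub-case must be closed by your first engine: if $p$ is the pivot of both $H_{12}$ and $H_{13}$, then both holes have antennas $\{u,q\}$, so no interior vertex of $P_2$ (seen inside $H_{12}$) nor of $P_3$ (seen inside $H_{13}$) is a source; the two sources of $H_{23}$ are then forced to be $u$ and $v$, contradicting no-double-apex. With this sub-case added (and the symmetric positions of $p$ handled by relabelling), your case analysis becomes exhaustive and the proof is complete.
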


\subsection*{Subdivision and contraction of edges of derived graphs}

Let $G$ be an oriented graph derived from a Burling tree $T$. An arc
$uv$ of $G$ is a \emph{top arc with respect to $T$} if $v$ is the
out-neighbor of $u$ that is closest (in $T$) to the root of $T$. An
arc $uv$ of $G$ is a \emph{bottom arc with respect to $T$} if $v$ is
the out-neighbor of $u$ that is furthest (in $T$) from the root of
$T$.

	\begin{lemma}[\cite{Part2}, Lemma~3.8] \label{thm:subdivision}
          Let $G$ be an oriented graph derived from a Burling tree
          $T$. Any graph obtained from $G$ after performing the
          following operations, any number of times and in any order,
          is an oriented derived graph:
          \begin{enumerate}[label=(\roman*)]
          \item Replacing some bottom arcs $uv$ by a path of length at
            least~1, directed from $u$ to $v$.
          \item Replacing some top arcs $uv$ such that $u$ is a source
            of $G$ by an arc $wv$ and a path of length of length at
            least~1 from $w$ to $u$.
          \end{enumerate}
	\end{lemma}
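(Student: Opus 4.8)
The plan is to argue by induction on the number of operations performed, so that it suffices to realise a single bottom-arc subdivision and a single top-arc subdivision as new Burling trees. Recall that if $G$ is derived from $T$, then the out-neighbours of a vertex $x$ in the fully derived graph are exactly the vertices lying on the downward branch $\choosePath(x)$ issued from $\lastBorn(p(x))$; the top arc leaves $x$ towards the upper end of this branch and the bottom arc towards its lower end. Moreover $G$ is the subgraph induced on the \emph{black} vertices, the \emph{shadow} (white) vertices being present in $T$ but absent from $G$; these shadow vertices are the tool that will give us room to route new branches. Each application of (i) or (ii) that I describe only introduces new vertices and edits $\lastBorn$ and $\choosePath$ in a bounded neighbourhood of $v$ (resp.\ of $u$), leaving the choose-branches of all other vertices untouched; consequently it changes neither which arcs elsewhere are bottom or top arcs nor the source status of the other vertices, which is exactly what licenses performing the operations in any number and in any order.

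For operation (i), subdividing a bottom arc $uv$, I would grow the tree downward below $v$. Since $v$ is the lower end of $\choosePath(u)$, I create fresh vertices forming a directed path $u\,w_1\cdots w_k\,v$, where the $w_i$ are new black vertices and each step is mediated by a short private branch of fresh shadow vertices, arranged so that the unique black out-neighbour of $w_i$ is $w_{i+1}$ and that of $w_k$ is $v$. I then redefine $\choosePath(u)$ so that its branch stops at $w_1$ instead of continuing to $v$, which deletes precisely the arc $uv$ and creates $uw_1$ while keeping every other out-neighbour of $u$. The shadow vertices are essential here: they let each new branch begin at a freshly created last-born and descend to its single intended black target, which one cannot arrange with black vertices alone, because a non-last-born vertex is forced to have $\lastBorn(p(\cdot))$ as the top of its branch.

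For operation (ii), subdividing a top arc $uv$ with $u$ a source, I would instead create a new source $w$ that takes over the top of the configuration. Placing $w$ as a new sibling of $u$ makes $\lastBorn(p(w))=\lastBorn(p(u))=v$, so the branch $\choosePath(w)$ can be made to start at $v$, giving the arc $wv$ for free; I then build a fresh directed path $w\cdots u$ down to $u$, again using shadow vertices to control the intermediate branches, and re-anchor $u$ so that it becomes the final black vertex reached by this path. The hypothesis that $u$ is a source is used precisely at this point: since no old vertex has $u$ on its choose-branch, $u$ can be given a new in-arc (and, if necessary, repositioned) without conflicting with any existing arc, and no old vertex becomes adjacent to the new path.

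The step I expect to be the main obstacle is, in both cases, the verification that the subgraph induced on the old vertices together with the new path vertices contains \emph{exactly} the prescribed edges and nothing more. Concretely, one must track, for every vertex of the modified tree, which downward branches pass through it, and check that the edited values of $\lastBorn$ and $\choosePath$ neither destroy a required old adjacency nor create a forbidden new one; in particular, that the fresh vertices receive no spurious in-arcs and that reaching the fixed vertex $v$ in case (i), respectively $u$ in case (ii), does not accidentally attach it to other vertices. Managing this bookkeeping with the shadow vertices, together with the source hypothesis in (ii) which forbids unwanted in-arcs at $u$, is the technical heart of the argument.
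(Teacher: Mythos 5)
First, a caveat: this paper does not actually prove Lemma~\ref{thm:subdivision} --- it is quoted from Part~II --- so your proposal can only be judged against the definitions given here, not against a proof text. Judged that way, the plan for operation (i) fails for a directional reason. In a graph derived from a Burling tree, the out-neighbours of a vertex $x$ are the vertices of $\choosePath(x)$, a branch that \emph{starts at} $\lastBorn(p(x))$ and \emph{descends} in the tree; hence an arc can enter a fixed existing vertex $v$ only from a vertex hanging off a strict ancestor of $v$, whose branch descends through $v$. If the new black vertices $w_1,\dots,w_k$ are grown below $v$, as you propose, then no admissible $\choosePath(w_k)$ can contain $v$, so the arc $w_k v$ cannot be created; and if $w_1$ lies below $v$, then any branch of $u$ reaching $w_1$ is a tree path through $v$, so redefining $\choosePath(u)$ to ``stop at $w_1$'' while dropping $v$ is impossible. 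The two requirements you place on the same picture contradict each other. If instead one reads your sketch as inserting the $w_i$ on the tree edge above $v$, it still breaks: every other in-neighbour of $v$ whose branch descends through $p(v)$ then picks up spurious arcs to the black $w_i$, and $v$ itself gets re-parented, which either makes $v$ a last-born (killing $\choosePath(v)$) or re-anchors $\choosePath(v)$ at a fresh leaf (same effect), destroying $v$'s out-arcs. The construction that works inserts fresh \emph{shadow} vertices beside $v$ --- e.g.\ a new shadow child $s$ of $p(v)$ declared the new last-born, under which $v$ and the old last-born $c$ of $p(v)$ are re-hung with $\lastBorn(s)=c$ --- and hangs the black $w_i$ off this shadow chain; the whole point of that re-hanging, for which your sketch has no mechanism, is that the anchors of $\choosePath(v)$ and of the choose-paths of $v$'s siblings are preserved (or change only by a shadow prefix), so no old arc dies and no new black arc appears.

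Operation (ii) has the same missing mechanism plus a wrong identity. You assert $\lastBorn(p(u))=v$; in general the top of $u$'s branch is a shadow vertex strictly above $v$. What is true, and what actually makes ``$w$ = new sibling of $u$'' give exactly the arc $wv$, is that every branch vertex strictly above $v$ is shadow, precisely because $uv$ is a \emph{top} arc. The harder point you leave untouched is that $w$ must have two black out-neighbours, $v$ and the first vertex of the new path towards $u$, and both must lie on the single branch $\choosePath(w)$; since branches are tree paths, this forces $u$, after re-parenting (which the source hypothesis indeed makes safe on the in-arc side), to become a tree \emph{descendant} of $v$. But re-parenting $u$ re-anchors $\choosePath(u)$ at the last-born of its new parent, while $u$'s remaining out-arcs (the black branch vertices below $v$) must survive; producing an anchor equal to the branch vertex just below $v$ requires the same shadow re-hanging trick, and is harmless only because all branch vertices strictly between $v$ and $u$'s next black out-neighbour are shadow. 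Your closing paragraph concedes that this bookkeeping is unverified, but it is not a routine check to be appended to the argument --- it \emph{is} the argument, and for part (i) the picture you set up is one in which it cannot be carried out.
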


	The two operations in Lemma~\ref{thm:subdivision} are
        respectively called \emph{subdivision} of arc $ uv $, and
        \emph{top-subdivision} of arc $ uv$.

	\emph{Contraction} of an edge $ uv$ of a (non-oriented) graph $ G $ is to remove the edge $ uv $ and identify the two vertices $ u $ and $ v $. Contraction of an arc in an oriented graph is the contraction of the same edge in the underlying graph.

	\begin{lemma}[\cite{Part2}, Lemma
          3.9] \label{thm:contraction} Let $ G $ be an oriented derived
          graph, and let $ uv $ be an arc such that $ u $ is the only
          in-neighbor of $ v $ and $ v $ is the only out-neighbor of
          $ u $, i.e. $ N^+(u) = \{v\} $ and $ N^-(v) = \{u\}$. Then
          the graph $ G' $ obtained by contracting $ uv $ is also a
          derived graph and the top-arcs (resp.\ bottom-arcs) of $ G $
          but $ uv $ are the top-arcs (resp.\ bottom-arcs) of $ G'$.
	\end{lemma}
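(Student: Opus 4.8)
The plan is to exhibit a Burling tree from which $G'$ is derived. Fix a Burling tree $(T, r, \lastBorn, \choosePath)$ such that $G$ is derived from it, so that $V(G) \subseteq V(T)$ and the arcs of $G$ are exactly the arcs $ab$ of the oriented graph fully derived from $T$ with $a, b \in V(G)$. Write $w$ for the vertex of $G'$ obtained by contracting $uv$. Since contraction identifies $u$ and $v$, the merged vertex $w$ should have in-neighbors $N^-(u)$ and out-neighbors $N^+(v)$ (using $N^+(u) = \{v\}$ and $N^-(v) = \{u\}$), while every other adjacency of $G$ is untouched. Before building the tree I would record that $G$, being a Burling graph, is triangle-free, so $u$ and $v$ have no common neighbor; hence the contraction creates no parallel edges and is compatible with the orientation, and $G'$ is a well-defined simple oriented graph.

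I would then dispose of the degenerate case in which $v$ is a sink of $G$, i.e.\ $N^+(v) = \varnothing$. There $v$ has degree $1$ in $G$, contracting $uv$ is the same as deleting $v$, and $G' = G \sm v$ is an induced subgraph of $G$ and so is derived from $T$. Moreover $uv$ is the only arc of $G$ with head $v$ and the only arc with tail $u$, so deleting $v$ leaves the out-neighbor set of every other vertex unchanged; since every arc of $G$ other than $uv$ has tail different from $u$, the claimed top/bottom-arc correspondence is immediate.

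For the main case $N^+(v) \neq \varnothing$ (so $\choosePath(v) \neq \varnothing$ and $v$ is a non-last-born, non-root vertex of $T$), I would first locate $u$ and $v$ in $T$. Because $v \in \choosePath(u)$, the vertex $v$ lies on the branch $\choosePath(u)$, which issues from $\lastBorn(p(u))$; thus $u$ and $v$ are not in ancestor--descendant relation but are cousins sharing the parent $p(u)$, with $u$ a non-last-born child and $v$ inside the last-born subtree. Two observations drive the construction. First, since $N^+(u) = \{v\}$, every vertex of $\choosePath(u)$ strictly above $v$ --- in particular $\lastBorn(p(u))$ and $p(v)$ --- is a shadow vertex. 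Second, since $N^-(v) = \{u\}$, the vertex $u$ is the only non-shadow vertex whose choose-branch meets $v$. I would then build $T'$ by contracting, in $T$, the path of shadow vertices of $\choosePath(u)$ between $\lastBorn(p(u))$ and $v$, which promotes $v$ to the position formerly held by $\lastBorn(p(u))$, and then merging the promoted vertex with $u$ into a single vertex $w$; the functions $\lastBorn$ and $\choosePath$ are inherited so that $w$ keeps the choose-branch of $v$ while acquiring the upward connections of $u$, every other vertex keeping its place in $T$.

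The main obstacle is the verification that this surgery does exactly what is wanted. Three things must be checked: (a) that $T'$ is a genuine Burling tree, i.e.\ that the redefined $\choosePath$ still assigns to each relevant vertex a branch beginning at the last-born of its parent; (b) that the oriented graph fully derived from $T'$, restricted to $V(G')$, is exactly $G'$ --- here one must show both that every required arc appears and, more delicately, that no spurious arc is created by the re-routing, which is precisely where triangle-freeness and the hypotheses $N^+(u) = \{v\}$, $N^-(v) = \{u\}$ (together with the shadow-vertex observations above) are used; and (c) the top/bottom-arc correspondence, which follows once one verifies that the construction preserves the relative distance to the root of all retained vertices, so that for each arc other than $uv$ the out-neighbor closest to (resp.\ furthest from) the root is unchanged. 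I expect step (b), ruling out unwanted adjacencies, to be the technical heart of the argument.
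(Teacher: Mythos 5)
The paper itself does not prove this lemma (it is imported from Part~II, Lemma~3.9), so your proposal can only be judged on its own terms; unfortunately the tree surgery at its core cannot work. Write $\choosePath(u) = x_0 x_1 \dots x_k$ with $x_0 = \lastBorn(p(u))$ and $v = x_i$; in your main case $i \geq 1$ and $v$ is not a last-born, so $\choosePath(v)$ is a branch starting at $\lastBorn(p(v)) = \lastBorn(x_{i-1})$, a child of $x_{i-1}$ other than $v$. Your surgery merges $u, x_0, \dots, x_{i-1}, v$ into a single child $w$ of $p(u)$, and thereby hangs the subtree rooted at $\lastBorn(x_{i-1})$ --- the subtree containing all of $N^+(v)$ --- \emph{below} $w$. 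But condition (iii) in the definition of a Burling tree forces $\choosePath'(w)$ to be a branch starting at $\lastBorn'(p(u))$: if $w$ itself is declared the last-born of $p(u)$, then $\choosePath'(w) = \varnothing$ and $w$ gets no out-neighbors; if some other child of $p(u)$ is the last-born, then $\choosePath'(w)$ lies in a sibling subtree of $w$, disjoint from the subtree containing $N^+(v)$. Either way $w$ cannot acquire the out-neighborhood $N^+(v)$, so in the main case no assignment of $\lastBorn'$ and $\choosePath'$ on your $T'$ derives $G'$; the flaw is in the construction, not in the verification. (The spurious-arc issue you flag is also genuinely unfixable there: a vertex $a \in V(G)$ whose choose-branch runs through some of the $x_j$ and then into the subtree of $\lastBorn(x_{i-1})$ is allowed by the hypotheses, which only constrain branches through $u$ or $v$; after your contraction its branch passes through $w$, forcing an arc $aw \notin A(G')$, and truncating or emptying that branch destroys genuine arcs of $G'$.)

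The lemma in fact has a proof that requires no modification of the tree at all, and your two preliminary observations are exactly what it needs. Keep $T$, its root, and every vertex in place; let the node $u$ itself represent the contracted vertex $w$; delete $v$ from the vertex set of the derived graph (demote it to a shadow vertex); and change the choose function only at $u$, setting $\choosePath'(u) := \{x_0, \dots, x_{i-1}\} \cup \choosePath(v)$. This is a legitimate branch starting at $\lastBorn(p(u))$: it descends $x_0, \dots, x_{i-1}$ and then continues with $\choosePath(v)$, which starts at the child $\lastBorn(x_{i-1})$ of $x_{i-1}$. Since $N^+(u) = \{v\}$ makes every $x_j$ with $j < i$ a shadow vertex, the out-neighborhood of $u$ in the new derived graph is exactly $\choosePath(v) \cap V(G) = N^+(v)$; all other choose-branches, hence all other arcs, are untouched, and since $N^-(v) = \{u\}$ no arc is lost by deleting $v$. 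The result is precisely $G'$. The top/bottom-arc statement is then immediate because no vertex changes its position in $T$ (your point (c)) --- a property your surgery, which relocates $v$ and its whole subtree, could not have preserved. Your degenerate case $N^+(v) = \varnothing$, handled by deleting the degree-one vertex $v$, is correct as written.
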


	\subsection*{Finding graphs that are not weakly pervasive}
	
	The following lemma enables us to use Burling graphs to find new graphs that are not weakly
        pervasive.
	
	\begin{lemma} \label{theorem:the-method} Let $ H $ be a
          graph. If no subdivision of $ H $ is a Burling
          graph, then $ H $ is not a weakly pervasive graph.
	\end{lemma}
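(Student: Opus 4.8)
The plan is to exhibit the class of Burling graphs as a witness that $\Forbs(H)$ fails to be $\chi$-bounded, exactly along the lines sketched in the introduction. Write $\mathcal{B}$ for the class of Burling graphs. The first and central step is to verify the inclusion $\mathcal{B} \subseteq \Forbs(H)$; that is, to show that no Burling graph contains a subdivision of $H$ as an induced subgraph. I would argue by contradiction: suppose some Burling graph $G$ contained an induced subgraph $H'$ that is a subdivision of $H$. Since the class of Burling (equivalently, derived) graphs consists of all induced subgraphs of the fully derived graphs, it is closed under taking induced subgraphs; hence $H'$, being an induced subgraph of the Burling graph $G$, would itself be a Burling graph. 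This contradicts the hypothesis that none of the subdivisions of $H$ is a Burling graph. Therefore every Burling graph lies in $\Forbs(H)$.

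The second step is to transfer the unboundedness of $\chi$ from $\mathcal{B}$ to $\Forbs(H)$. Recall that the graphs of the Burling sequence are triangle-free, so every $G \in \mathcal{B}$ satisfies $\omega(G) \leq 2$, while $\chi$ is unbounded on $\mathcal{B}$. Suppose, toward a contradiction, that $\Forbs(H)$ admitted a $\chi$-binding function $f$. Applying the defining inequality $\chi(G) \leq f(\omega(G))$ to each $G \in \mathcal{B} \subseteq \Forbs(H)$ (taking $G$ itself as the induced subgraph of $G$) and using $\omega(G) \leq 2$, we would obtain $\chi(G) \leq f(2)$ for all $G \in \mathcal{B}$, contradicting the fact that $\chi$ is unbounded on $\mathcal{B}$. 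Hence no such $f$ exists, $\Forbs(H)$ is not $\chi$-bounded, and by definition $H$ is non-weakly pervasive.

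There is essentially no hard analytic step here: the argument is a packaging of two facts already available to us, namely the hereditariness of $\mathcal{B}$ under induced subgraphs and the unbounded chromatic number of the triangle-free Burling graphs. The only point that requires care is the \emph{direction} of the induced-subgraph closure: the hypothesis is phrased in terms of subdivisions of $H$ \emph{being} Burling, whereas what we actually need is that a subdivision of $H$ \emph{appearing inside} a Burling graph is again Burling. This passage is exactly what closure of $\mathcal{B}$ under induced subgraphs supplies, and making it explicit is all the proof demands.
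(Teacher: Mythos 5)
Your proof is correct and follows exactly the paper's route: establish the inclusion of the class of Burling graphs in $\Forbs(H)$ (via closure of Burling graphs under induced subgraphs) and then conclude that $\Forbs(H)$ is not $\chi$-bounded because Burling graphs are triangle-free with unbounded chromatic number. The paper's own proof is just a compressed version of the same two steps, so your proposal simply makes explicit the details the paper leaves implicit.
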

	
	\begin{proof}
          The class of Burling graphs is contained in $ \Forbs (H)
          $. Thus, $\Forbs (H)$ is not $\chi$-bounded. Hence, $ H $ is
          not weakly pervasive.
	\end{proof}
	

	\section{Complete graphs} \label{section:Kn}

	The graph $ K_3 $ is a weakly pervasive graph, since $ \Forbs(K_3) $ is the class of all forests. In~\cite{Leveque12}, L\'{e}v\^{e}que, Maffray, and Trotignon proved that $ K_4$ is also a weakly pervasive graph. In this section, we prove that $ K_5 $ is not a weakly pervasive graph, and thus is not any $ K_n $ for $ n\geq 5 $. 

	\begin{lemma} \label{lem:subdivisions-of-K5}
		Let $ G $ be a triangle-free subdivision of $ K_5 $. If all the subdivisions of $ K_4 $ in it are of types 2, 3, and 4, then $ G $ has one of the following forms:
		\begin{enumerate}[label=(\roman*)]
			\item type A: edges of a $4$-cycle in $ G $ are not subdivided at all, and any other edge is subdivided at least once. 
			\item type B: edges of a $5$-cycle in $ G $ are not subdivided at all, and any other edge is subdivided at least once.
		\end{enumerate} 
	See Figure~\ref{fig:sd-of-K5}.
	
	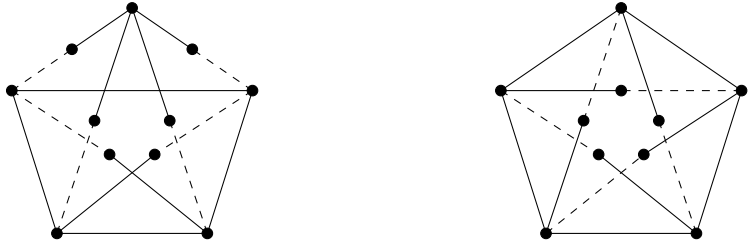
\begin{figure}
		\centering
		\begin{multicols}{2}
			\begin{center}
				\begin{tikzpicture}[scale=.2]
				\filldraw[black] (-5,-5) circle (10pt);
				\filldraw[black] (5,-5) circle (10pt);
				\filldraw[black] (-8,4.5) circle (10pt);
				\filldraw[black] (8,4.5) circle (10pt);
				\filldraw[black] (0,10) circle (10pt);
				\filldraw[black] (-4,7.25) circle (10pt);
				\filldraw[black] (4,7.25) circle (10pt);
				\filldraw[black] (-2.5,2.5) circle (10pt);
				\filldraw[black] (2.5,2.5) circle (10pt);
				\filldraw[black] (-1.5,.25) circle (10pt);
				\filldraw[black] (1.5,.25) circle (10pt);
				\draw[black] (0,10) -- (-4,7.25) ;
				\draw[black] (0,10) -- (4,7.25) ;
				\draw[black] (0,10) -- (-2.5,2.5)  ;
				\draw[black] (0,10) -- (2.5,2.5)  ;
				\draw[black] (-8,4.5) -- (8,4.5) ;
				\draw[black] (-8,4.5) -- (-5,-5);
				\draw[black] (8,4.5) -- (5,-5);
				\draw[black] (-5,-5) -- (5,-5);
				\draw[black] (-5,-5) -- (1.5,.25);	
				\draw[black] (5,-5) -- (-1.5,.25);
				\draw[dashed] (-5,-5) -- (-2.5,2.5);
				\draw[dashed] (5,-5) -- (2.5,2.5);
				\draw[dashed] (-8,4.5) -- (-4,7.25);
				\draw[dashed] (8,4.5) -- (4,7.25);
				\draw[dashed] (-8,4.5) -- (-1.5,.25);
				\draw[dashed] (8,4.5) -- (1.5,.25);
				\end{tikzpicture}
			\end{center} 
			\columnbreak
			\begin{center}
				\begin{tikzpicture}[scale=.2]
				\filldraw[black] (-5,-5) circle (10pt);
				\filldraw[black] (5,-5) circle (10pt);
				\filldraw[black] (-8,4.5) circle (10pt);
				\filldraw[black] (8,4.5) circle (10pt);
				\filldraw[black] (0,10) circle (10pt);
				\filldraw[black] (-2.5,2.5) circle (10pt);
				\filldraw[black] (2.5,2.5) circle (10pt);
				\filldraw[black] (-1.5,.25) circle (10pt);
				\filldraw[black] (1.5,.25) circle (10pt);
				\filldraw[black] (0,4.5) circle (10pt);
				\draw[black] (0,10) -- (-8,4.5) ;
				\draw[black] (0,10) -- (8,4.5) ;
				\draw[dashed] (0,10) -- (-2.5,2.5)  ;
				\draw[black] (0,10) -- (2.5,2.5)  ;
				\draw[black] (-8,4.5) -- (0,4.5) ;
				\draw[dashed] (8,4.5) -- (0,4.5) ;
				\draw[black] (-8,4.5) -- (-5,-5);
				\draw[black] (8,4.5) -- (5,-5);
				\draw[black] (-5,-5) -- (5,-5);
				\draw[dashed] (-5,-5) -- (1.5,.25);	
				\draw[black] (5,-5) -- (-1.5,.25);
				\draw[black] (-5,-5) -- (-2.5,2.5);
				\draw[dashed] (5,-5) -- (2.5,2.5);
				\draw[dashed] (-8,4.5) -- (-1.5,.25);
				\draw[black] (8,4.5) -- (1.5,.25);
				\end{tikzpicture}
			\end{center}
		\end{multicols}
		\caption{Subdivisions of $K_5 $, type A subdivision on the left and type B subdivision on the right. Only dashed edges can be subdivided.} \label{fig:sd-of-K5}
	\end{figure}
	
	\end{lemma}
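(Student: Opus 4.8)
The plan is to recast everything as a question about which edges of $K_5$ are subdivided. Write $v_1,\dots,v_5$ for the five branch vertices of $G$ (the vertices of degree~$4$), and let $S$ be the graph on $\{v_1,\dots,v_5\}$ whose edges are exactly the pairs $v_iv_j$ whose corresponding $K_5$-edge is \emph{not} subdivided in $G$. Since an internal subdivision vertex has degree~$2$ and its two neighbours (consecutive on one subdivided edge) are non-adjacent, no triangle of $G$ can pass through an internal vertex; hence every triangle of $G$ is a triangle of $S$, and $G$ is triangle-free if and only if $S$ is. For each of the five $4$-subsets $W$ of branch vertices, the part of $G$ spanned by $W$ together with the internal vertices on the six $K_5$-edges inside $W$ is a subdivision of $K_4$, and its type counts how many of those six edges are subdivided. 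Reading Lemma~\ref{thm:subdivisions-of-K4} and Figure~\ref{fig:sd-of-K4} in terms of $S$, this $K_4$ is of type~$2$, $3$, or~$4$ precisely when $S[W]$ is, respectively, a $4$-cycle $C_4$, a path $P_4$, or two edges sharing a vertex together with an isolated vertex. Thus the hypothesis says exactly that $S$ is triangle-free and every $S[W]$ is one of these three graphs, and the goal is to deduce $S\cong C_4\cup K_1$ (type~A) or $S\cong C_5$ (type~B).

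From this I would extract two forbidden local patterns, both supplied by Lemma~\ref{thm:subdivisions-of-K4}. A triangle-free subdivision of $K_4$ with three non-subdivided edges is Burling only when they form a $P_4$, not a star $K_{1,3}$; and one with two non-subdivided edges is Burling only when they share a vertex, not when they form a matching $2K_2$. The first pattern shows that if some $v_i$ had three neighbours in $S$ then, as $S$ is triangle-free, those neighbours would be pairwise non-adjacent and the $4$-subset consisting of $v_i$ and them would induce a star, which is excluded; hence $\Delta(S)\le 2$, so $S$ is a disjoint union of paths and of cycles of length at least~$4$. The second pattern shows that no $4$-subset of branch vertices may induce two disjoint edges of $S$.

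Finally I would count and classify. Each edge of $S$ lies in exactly three of the five $4$-subsets, and each $4$-subset carries between $2$ and $4$ edges of $S$, so $3\,|E(S)|\ge 10$, giving $|E(S)|\ge 4$; together with $\Delta(S)\le 2$ this yields $|E(S)|\in\{4,5\}$. If $|E(S)|=5$ then $S$ is $2$-regular and triangle-free on five vertices, forcing $S\cong C_5$ and type~B. If $|E(S)|=4$ then the only $5$-vertex triangle-free graphs with four edges and maximum degree at most~$2$ are $C_4\cup K_1$ and the path $P_5$; writing $P_5=abcde$, the subset $\{a,b,d,e\}$ induces the two disjoint edges $ab$ and $de$, which the second forbidden pattern rules out, so $S\cong C_4\cup K_1$ and type~A. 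A direct check that $C_5$ and $C_4\cup K_1$ really do make every $S[W]$ equal to $C_4$, $P_4$, or the two-edges-plus-isolated-vertex graph confirms that both forms genuinely occur.

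The step I expect to be the main obstacle is the translation in the first paragraph: one has to read off, from the three pictures in Figure~\ref{fig:sd-of-K4} and from Lemma~\ref{thm:subdivisions-of-K4}, the exact edge-pattern attached to each type, and in particular recognise that the two triangle-free \emph{non}-Burling subdivisions of $K_4$ are precisely the star and the matching. Once these two patterns are pinned down, they are exactly what force $\Delta(S)\le 2$ and exclude $P_5$, and the remainder is a short finite case analysis.
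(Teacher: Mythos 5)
Your proof is correct, but it is organized quite differently from the paper's. The paper fixes one branch vertex $e$, considers the $K_4$-subdivision $H_e$ on the remaining four branch vertices, and runs a case analysis on its type (2, 3, or 4); in each case it propagates constraints to the five edges incident to $e$, using that a wheel is not a Burling graph and that the other four $K_4$-subdivisions must again be of type 2, 3, or 4, until the structure of $G$ is forced. You instead work globally with the auxiliary graph $S$ of non-subdivided edges: you extract from Lemma~\ref{thm:subdivisions-of-K4} and Figure~\ref{fig:sd-of-K4} the two forbidden patterns (non-subdivided edges forming $K_{1,3}$ or $2K_2$ on a $4$-subset), deduce $\Delta(S)\le 2$ from the first, bound $4 \le |E(S)| \le 5$ by a counting argument (each edge of $S$ lies in exactly three of the five $4$-subsets, and each $4$-subset carries at least two edges of $S$), and then classify the triangle-free graphs on five vertices with maximum degree at most~2 and four or five edges, eliminating $P_5$ via the matching pattern. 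I checked your translation of the types into $S$-patterns ($C_4$, $P_4$, and two adjacent edges plus an isolated vertex) against Lemma~\ref{thm:subdivisions-of-K4}, and it is exactly right, as are the counting and the classification. Both arguments ultimately rest on the same key input, but yours replaces the paper's labelled case-by-case propagation (and its reliance on the wheel observation) with a symmetric counting-and-classification argument; this buys a proof with no arbitrary choices or WLOG labelings, and it makes explicit that the lemma is purely a statement about $5$-vertex graphs all of whose $4$-vertex induced subgraphs avoid $K_{1,3}$ and $2K_2$. The paper's route, in exchange, avoids any counting and shows concretely how fixing the type of a single $K_4$ forces the rest of the structure, which is the style reused in its later arguments.
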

	
	\begin{proof}
		
		Let $ M= \{ a,b,c,d,e \} $ be the set of vertices of $ G $ of degree $ 4 $. For $ x \in M $, we denote by $ H_x $ the subdivision of $ K_4 $ containing $ M \sm \{x\} $ in $G $. By Lemma~\ref{thm:subdivisions-of-K4}, for all $ x \in M $, $ H_x $ is a type 2, 3, or 4 subdivision of~$ K_4 $. In particular, consider $ H_e $. There are three cases:

		\textit{Case 1.} $H_e $ is of type 2. Without loss of generality, let~$ ac $ and~$ bd $ be the subdivided edges of~$ H_e $. Let $ v \in \{a, b, c, d\} $. If $ ev \in E(G) $, then $ v $ is the center of a wheel in $ G $, a contradiction. Thus, $ ev $ is subdivided, and $ G $ is a type A subdivision of $K_5 $. 
		

		\textit{Case 2.} $H_e $ is of type 3. Without loss of generality, let $ ab $, $ ac$, and $ bd $ be the subdivided edges of~$ H_e $. So, $ ad, cd, bc \in E(G) $. If $ ce  $ is not subdivided in $ G $, then $ H_a $ is a wheel centered at $ c $, a contradiction. Thus, $ ce $ is subdivided in~$ G $. Similarly, one can prove that $ de $ must be subdivided. Now, because $ H_d $ must be of type 2, 3, or 4, $ be \in E(G) $. Then, because $ H_c$ must be of type 2, 3, or 4, $ ae \in E(G) $. So, $ G $ is a type B subdivision of $ K_5 $. 
		
		\textit{Case 3.} $H_e $ is of type 4. Without loss of generality, let $ ab $, $ ac $, $ ad $, and $ bd $ be the subdivided edges of $ H_e $. First of all, $ ce $ must be subdivided, otherwise $ H_a $ will be a wheel centered at $ c $. Secondly, because $ H_b $ should be of type 2, 3, or 4, we must have $ de \in E(G) $. In the same way, because $H_d $ should be of type 2, 3, or 4, we must have $ be \in E(G) $. Finally, $ ae $ must be subdivided, otherwise $H_c $ will be a wheel centered at $ e $. So, in this case, $ G $ is a type A subdivision of $ K_5 $. 
%
	\end{proof}
		
	\begin{lemma} \label{lem:typeA-K5}
		If $ G $ is a type A subdivision of $ K_5 $, then it is not a derived graph.
	\end{lemma}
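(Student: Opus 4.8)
The plan is to assume for contradiction that $G$ is an oriented derived graph, fix a Burling tree $T$ from which it is derived, and argue with the ancestor relation of $T$, which is a partial order and hence contains no directed cycle. The single tool I will use repeatedly is that the pivot of a hole is an ancestor of every non-antenna vertex of that hole and of neither of its two antennas (this is the pivot/antenna description recalled before Lemma~\ref{thm:holes-4-extrema}). Label the five degree-$4$ branch vertices so that the unsubdivided $4$-cycle is $1\,2\,3\,4$, the two subdivided diagonals are $1{-}3$ and $2{-}4$, and the apex $5$ is joined to $1,2,3,4$ by four subdivided spokes.

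First I would normalize using symmetry. The $4$-cycle $C=1\,2\,3\,4$ is an induced hole, so by Lemma~\ref{thm:holes-4-extrema} its two sinks form one diagonal pair and its two sources the other. Since the dihedral symmetry of a type A graph fixes $5$ and swaps the two diagonal pairs, I may assume the sinks are $\{1,3\}$, with $1$ the pivot and $3$ the bottom, and $\{2,4\}$ the sources (antennas). This records $3\prec 1$ and the fact that $1$ is an ancestor of neither $2$ nor $4$.

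Next I would mine the two diagonals. For the diagonal joining the sinks $1,3$, consider the holes built from it together with $1{-}2{-}3$ and with $1{-}4{-}3$: in each, the source $2$ (resp.\ $4$) of $C$ is again a source, so the pivot is one of its neighbours in $\{1,3\}$, and it cannot be $3$ (that would make $3$ an ancestor of $1$); hence the pivot is $1$, which fixes the orientation of this diagonal and shows $1\succ$ its interior. I then bring in the apex through the generalized theta on the four internally disjoint $1$–$3$ paths (via $2$, via $4$, via the diagonal, and via the two spokes through $5$): every resulting hole has $1,3$ as its two sinks, so its pivot is again $1$, and the path through $5$ yields $1\succ 5$. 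A symmetric analysis of the diagonal joining the sources $2,4$ — where now both apexes are sources in every hole of the corresponding generalized theta — ranks the interior of that diagonal against $1,3,5$ in the ancestor order.

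The heart of the argument, and the step I expect to be the main obstacle, is turning these comparisons into a violation of acyclicity. The convenient interaction lemmas are awkward here: because the five branch vertices of a $K_5$-subdivision are pairwise joined by an edge or a subdivided diagonal, almost every candidate theta, domino or dumbbell carries a chord, the graph has no two vertex-disjoint holes, and the thetas that do occur are not long (so Lemma~\ref{lem:theta-pivot} does not apply) and only certify a consistent total order on their midpoints. The way I would force the contradiction is to locate the genuinely induced dominoes — each formed by one $4$-cycle edge together with a hole routed through the apex $5$, whose private parts are spoke and diagonal interiors and hence chordless — feed them to Lemma~\ref{thm:domino} to pin the disputed pivots exactly, and then combine the resulting relations with $3\prec 1$, the ranking of the $2{-}4$-interior, and the spoke relations at $5$ to close a directed cycle in the ancestor order. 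This is also where the apex is indispensable: deleting $5$ leaves a type-2 subdivision of $K_4$, which is a Burling graph by Lemma~\ref{thm:subdivisions-of-K4}, so any correct argument must genuinely use the spokes. Once acyclicity is contradicted, $G$ cannot be derived.
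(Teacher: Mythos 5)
Your opening moves are sound: labelling the $4$-cycle so that $1$ is its pivot and $3$ its bottom, and deducing that $1$ must also be the pivot of the holes obtained by pairing $1$-$2$-$3$ with the subdivided diagonal or with the two spokes through $5$ (whence $5\prec 1$), are correct applications of the hole structure recalled before Lemma~\ref{thm:holes-4-extrema}. But the proof has a genuine gap exactly where you flag it yourself: the contradiction is never derived. You list ingredients (``locate the genuinely induced dominoes \dots feed them to Lemma~\ref{thm:domino} \dots close a directed cycle in the ancestor order''), but no specific domino is analyzed, no pivot is actually pinned down by Lemma~\ref{thm:domino}, and no cycle in the ancestor order is exhibited; ``the way I would force the contradiction'' is a plan, not an argument. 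Two intermediate claims are also false or unjustified as stated: the holes of the theta on the four $1$--$3$ paths need \emph{not} have $\{1,3\}$ as their two sinks (the bottom of such a hole can be an interior vertex of a subdivided path; only ``the pivot is $1$'' is justified), and for the $2$--$4$ diagonal you cannot assert that ``both apexes are sources in every hole,'' since nothing established so far fixes the orientation of that diagonal at its endpoints.

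For what it is worth, your plan can be completed, and without the diagonals at all. Writing $J_{uv}$ for the hole through the $4$-cycle edge $uv$ and the two spokes joining $u,v$ to $5$: the domino formed by $C$ and $J_{23}$ forces, via Lemma~\ref{thm:domino}, that $3$ is the pivot of $J_{23}$ (it is the only vertex of $\{2,3\}$ that is subordinate in $C$), so $5$ and the interior of the spoke $2$--$5$ are descendants of $3\prec 1$; the domino formed by $C$ and $J_{12}$ forces $1$ to be subordinate in $J_{12}$, so the pivot of $J_{12}$ is a strict ancestor of $1$ --- yet every vertex of $J_{12}$ is either a descendant of $1$, or a hole-neighbour of the subordinate vertex $1$ (hence an antenna, not the pivot), a contradiction. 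That said, you should know the paper's own proof is far shorter and uses a different tool: delete the interiors of both diagonals; the remaining induced subgraph (the $4$-cycle, the subdivided spokes, and the apex) has no star cutset, no vertex of degree at most $1$, and is not a chandelier, so Theorem~\ref{thm:starcutset-chandelier-degree1} immediately shows it is not derived. Even once repaired, your hole-by-hole route is considerably heavier than that three-line argument.
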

	\begin{proof}
	Let $ \{ a,b,c,d,e \}$ be the set of vertices of degree 4 in $ G $. Without loss of generality, assume that $ ab, bc, cd, da \in E(G) $. The graph $ H $ shown in Figure~\ref{fig:subgraph-typeA-sd-of-K5} is an induced subgraph of~$ G $. 
	\begin{figure}
		\centering
		\vspace*{-.3cm}
		\begin{tikzpicture}[scale=.55]
		\filldraw[black] (-2,-2) circle (3pt) node[anchor=north east] {$d$};
		\filldraw[black] (-2,2) circle (3pt) node[anchor=south east] {$a$};
		\filldraw[black] (2,-2) circle (3pt) node[anchor=north west] {$c$};
		\filldraw[black] (2,2) circle (3pt) node[anchor=south west] {$b$};
		\filldraw[black] (-1,-1) circle (3pt);
		\filldraw[black] (-1,1) circle (3pt);
		\filldraw[black] (1,-1) circle (3pt);
		\filldraw[black] (1,1) circle (3pt);
		\filldraw[black] (0,0) circle (3pt) node[anchor=east] {$e$};
		\draw[black] (2,2) -- (2,-2) -- (-2,-2) -- (-2,2) -- (2,2) -- (1,1);
		\draw[black] (2,-2) -- (1,-1) ;
		\draw[black] (-2,2) -- (-1,1)  ;
		\draw[black] (-2,-2) -- (-1,-1)  ;
		\draw[dashed] (0,0) -- (1,1);
		\draw[dashed] (0,0) -- (1,-1);
		\draw[dashed] (0,0) -- (-1,1);
		\draw[dashed] (0,0) -- (-1,-1);
		\end{tikzpicture} 
		\vspace*{-.25cm}
		\caption{A subgraph of a type A subdivision of $ K_5 $.} \label{fig:subgraph-typeA-sd-of-K5}
	\end{figure}
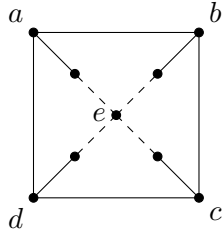
	
	Notice that~$ H $ has no start cutset, it has no vertex of degree 1, and it is not a chandelier (because in $ H$, for every vertex there is a cycle not containing it). Hence, by Theorem~\ref{thm:starcutset-chandelier-degree1}, $ H $ is not a derived graph, and thus, $ G $ is not a derived graph neither.
	\end{proof}
	
	\begin{lemma} \label{lem:typeB-K5}
		If $ G $ is a type B subdivision of $ K_5 $, then it is not a derived graph.
	\end{lemma}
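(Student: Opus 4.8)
The plan is to assume $G$ is a derived graph, fix an orientation of it that is an oriented derived graph, and derive a contradiction entirely from the way the holes of $G$ constrain one another, using Lemma~\ref{thm:holes-4-extrema} and Lemma~\ref{thm:domino}. I would deliberately avoid imitating the star-cutset argument of Lemma~\ref{lem:typeA-K5}: a type B graph is far too sparse, and one checks that for any pentagon vertex $v$ the closed neighbourhood of $v$ is already a cutset (it isolates the pentagram path joining the two pentagon-neighbours of $v$), so Theorem~\ref{thm:starcutset-chandelier-degree1} gives nothing here. Likewise, the pentagon has only five vertices, and one can check it contains no induced long theta (any three internally disjoint paths between two of the $a,b,c,d,e$ either reuse a pentagon edge as a chord or include a path of length~$2$), so Lemma~\ref{lem:theta-pivot} does not apply directly either.

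For the setup, write the five degree-$4$ vertices as $a,b,c,d,e$, with the unsubdivided $5$-cycle $a-b-c-d-e-a$ (the \emph{pentagon} $C$) and the five subdivided diagonals (\emph{pentagram paths}) $P_{ac},P_{bd},P_{ce},P_{da},P_{eb}$, each of length at least~$2$. For each pentagon vertex $v$ with pentagon-neighbours $x,y$, let $H^{(v)}$ be the hole $x-v-y-P_{xy}-x$ formed by the length-$2$ pentagon arc through $v$ together with $P_{xy}$; this is chordless because the only candidate chord $xy$ is a pentagram pair, hence a non-edge. I first analyse $C$ itself: by Lemma~\ref{thm:holes-4-extrema} it has exactly two sources, two sinks, and its pivot is the sink adjacent to both antennas. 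Using the rotational symmetry of the type B graph (a rotation of the pentagon permutes the set of pentagram pairs), I may assume the pivot of $C$ is $a$, so $b$ and $e$ are its antennas; and up to the reflection fixing $a$ (which also preserves the pentagram pairs, hence is a symmetry of $G$) I may take the second sink to be $d$ and $c$ transitive. This fixes every pentagon-edge orientation, and reading off the two pentagon edges at each $v$ determines the \emph{role} of $v$ inside its own hole $H^{(v)}$: $b$ and $e$ are sources (so antennas) of $H^{(b)}$ and $H^{(e)}$, the vertex $c$ is transitive (so subordinate) in $H^{(c)}$, and $a$ and $d$ are sinks of $H^{(a)}$ and $H^{(d)}$.

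Next I would exploit the five dominoes. Two consecutive holes $H^{(v)},H^{(v')}$ (with $v,v'$ adjacent on the pentagon) share exactly the edge $vv'$ and nothing else, since the two ``far'' pentagon vertices of these holes form a pentagram pair and are therefore non-adjacent; hence they form a domino and Lemma~\ref{thm:domino} applies. The key propagation observation is that whenever a shared vertex is a source of its own hole it can be neither the pivot nor a subordinate vertex there, which forces the conclusion of Lemma~\ref{thm:domino} onto the other shared vertex. Chaining this around the pentagon, I expect to obtain successively that $c$ is the pivot of $H^{(b)}$, that $a$ is the pivot of $H^{(a)}$, that $d$ is the pivot of $H^{(d)}$ (here the domino on $cd$ and the one on $de$ together rule out the only alternative, because $c$ is transitive in $H^{(c)}$ and so cannot be the pivot of $H^{(c)}$), and finally, from the dominoes on $de$ and on $ea$, that both $a$ and $d$ are subordinate in $H^{(e)}$.

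This produces the contradiction. In $H^{(e)}$ the vertex $e$ is a source, hence an antenna, so by the description of pivots the pivot of $H^{(e)}$ is a common neighbour of its two antennas and in particular a hole-neighbour of $e$, i.e. $a$ or $d$; but the domino deductions just forced both $a$ and $d$ to be subordinate in $H^{(e)}$, so neither can be its pivot, which is absurd. Hence $G$ is not a derived graph. The step I expect to be the main obstacle is the bookkeeping of the previous paragraph: one must verify carefully that each of the five configurations is genuinely a domino (no accidental chord, which is exactly where the ``far vertices are a pentagram pair'' check is used) and then propagate the \emph{source $\Rightarrow$ neither pivot nor subordinate} observation correctly through Lemma~\ref{thm:domino} to pin down each pivot; the symmetry reductions keep the case count small, but one should double-check that the two sub-cases (second sink $c$ versus $d$) really are interchanged by the reflection fixing $a$.
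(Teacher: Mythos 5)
Your proof is correct, but it follows a genuinely different route from the paper's. Both arguments begin identically: fix an orientation making $G$ an oriented derived graph, apply Lemma~\ref{thm:holes-4-extrema} to the pentagon, and pin down its two sources and two sinks up to symmetry. From there, the paper never touches the domino lemma: it examines only three further holes --- the two single-diagonal holes on $\{c,d,e\}\cup P_{ce}$ and on $\{a,b,c\}\cup P_{ac}$ (your $H^{(d)}$ and $H^{(b)}$, in its labelling), plus the hole formed by $P_{ac}$, $P_{ce}$ and one pentagon edge, which straddles \emph{two} pentagram paths --- and derives the contradiction by pure sink-counting: the sinks that the first two holes force inside $P_{ac}$ and $P_{ce}$, together with the second sink of the $\{c,d,e\}$-hole, give the straddling hole three sinks. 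You instead stay with the five single-diagonal holes $H^{(v)}$, check that consecutive ones form dominoes, and chain Lemma~\ref{thm:domino} together with the pivot/antenna/subordinate structure around the pentagon until $H^{(e)}$ has no legal pivot. I checked the bookkeeping you flagged and it goes through: each consecutive pair of holes is a genuine domino (the two far vertices are a pentagram pair, hence non-adjacent, and all other potential edges are excluded by degrees); a source of its own hole is indeed neither pivot nor subordinate there, which forces each domino's conclusion onto the other shared vertex; $c$ cannot be the pivot of $H^{(d)}$ (it has the out-arc $c\to d$ there) and $d$ cannot be the pivot of $H^{(c)}$ (its hole-neighbour $c$ is transitive, hence not an antenna), so $d$ is pivot of $H^{(d)}$; and uniqueness of a hole's pivot, implicit throughout the paper's framework, legitimizes the steps where an already-assigned pivot excludes the alternative. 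The two sub-cases for the pentagon's second sink are indeed swapped by the reflection fixing $a$ --- with the caveat that the rotation and reflection are symmetries of the labelled type-B structure rather than automorphisms of $G$ itself (the diagonal paths may have different lengths), but since your argument never uses the lengths, the relabelling is legitimate, exactly as in the paper's own ``without loss of generality.'' As for what each approach buys: the paper's proof is lighter (nothing beyond the four-extrema lemma) and shorter, at the price of spotting the one clever two-diagonal hole; yours requires the heavier domino machinery but is more systematic, and its propagation scheme is precisely the style of argument the paper itself deploys later in Section~\ref{section:Misc}, so it adapts more readily to other cyclic arrangements of holes.
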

	
	\begin{proof}
		
		Let $ M= \{ a,b,c,d,e \} $ be the set of degree 4 vertices of $ G $. Without loss of generality, we may assume that $ ab, bc, cd, de,  ea \in E(G) $. For $ u,v \in M $, $ u \neq v $, let $ P_{uv} $ denote the degree 2 vertices of the path replacing the edge $ uv $ when subdividing it. In particular $ u,v \notin P_{uv} $. For simplicity in writing, we denote a hole of $ G $ by only naming the vertices of $ M$ in that hole, if there is no confusion.
		
		For the sake of contradiction, suppose that $ G $ is a derived graph. So there is an orientation of $ G $ such that $ G $ is an oriented derived graph. From now on, consider $ G $ with this orientation. We denote the arcs of $ G $ in this orientation by $ A(G) $. 
		
		Consider the hole $ abcdea $ in $ G $. Without loss of
                generality, by Lemma~\ref{thm:holes-4-extrema}, we
                may assume that $ a $ and $ c $ are its sources,
                and~$ b$ and~$ e $ are its sinks.

		Now, consider the hole $ ecde $. Vertex $ d $ is neither a sink nor a source for this hole, and $ c $ also cannot be a sink of it because $ cd \in A(G)$. Therefore, the two sinks of $ ecde $ are among $ P_{ce} \cup \{ e\} $. Call them $ t_1 $ and $ t_2 $, and without loss of generality assume $ t_1 \in P_{ce} $. 
		
		Then, consider the hole $ abca $. For this hole, $ a $ and $ c $ cannot be sinks because $ ab, cd, \in A(G)$, and $ b $ is a sink. So, there is exactly one sink in $ P_{ac} $, call it $ t_3 $. 
		
		Finally, consider the hole $ acea $. Notice that $ t_1 $ and $ t_3 $ are the two sinks of $ acea $. If $ t_2 \in P_{ce} $ then it will be a third sink for $ acea $, a contradiction. If $ t_2 = e $, let $ f \in P_{ce} $ be the neighbor of $ e $ on the subdivided edge between $c $ and $ e $. Then $ fe \in A(G) $, and since we also have $ ae \in A(G) $, then again $ e $ will be a third sink for $ acea $, a contradiction. So $ G $ is not a Burling graph. 
	\end{proof}

	\begin{remark}
		Lemma~\ref{lem:typeA-K5} follows also from Theorem~3.3 of \cite{Chalopin2014}. In fact, by the mentioned theorem, one can see that a type A subdivision of $ K_5 $ is not even a restricted frame graphs, and because Burling graphs are restricted frame graphs, it is not a Burling graphs (or equivalently derived graph) neither.
		
		A type B subdivision of $ K_5 $ on the other hand, can be represented as a restricted frame graph. See Figure~\ref{pic:k5sdB}. 
		
		For the definition of restricted frame graphs, see Definition~2.2 in~\cite{Chalopin2014}, or see Section 6 of~\cite{Part1}.
	\end{remark}

	\begin{figure}
	\begin{center}
		\vspace*{-3cm}
		\includegraphics[width=10cm]{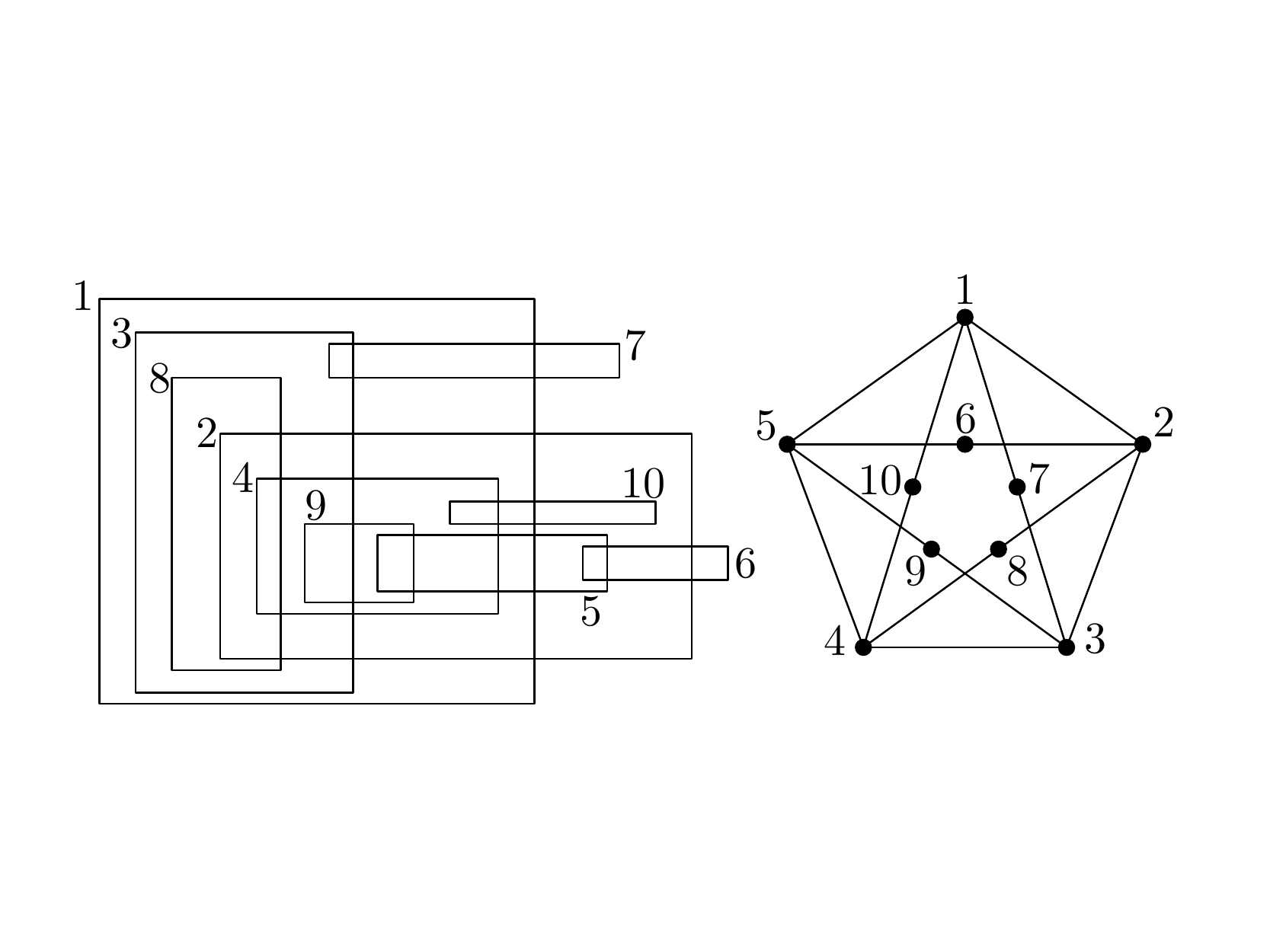}
		\vspace*{-2cm}
		\caption{\footnotesize A type B subdivision of $ K_5 $ and its representation as a restricted frame graph.} \label{pic:k5sdB}
	\end{center}
	\vspace*{-.7cm}
	\end{figure}

	\begin{theorem} \label{thm:K5-not-derived}
		The class of derived graphs includes no subdivision of $ K_5 $.
	\end{theorem}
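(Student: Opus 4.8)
The plan is to derive the theorem from the three lemmas of this section by a short contradiction argument. Suppose some subdivision $G$ of $K_5$ is a derived graph, and let $M=\{a,b,c,d,e\}$ be its five vertices of degree $4$. The first thing I would record is that $G$ is triangle-free: derived graphs are exactly the Burling graphs (Theorem~\ref{thm:BG=derived}), and the Burling sequence consists of triangle-free graphs, so every derived graph is triangle-free. Thus $G$ automatically meets the triangle-free hypothesis of Lemma~\ref{lem:subdivisions-of-K5}, and I never have to treat separately those subdivisions of $K_5$ that retain a triangle.

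Next I would verify the second hypothesis of Lemma~\ref{lem:subdivisions-of-K5}, namely that every subdivision of $K_4$ sitting inside $G$ is of type 2, 3, or 4. For each $x\in M$, the subgraph $H_x$ spanned by $M\sm\{x\}$ together with the internal vertices of the subdivided edges joining these four vertices is an \emph{induced} subgraph of $G$: the only candidate chords would involve internal vertices of distinct subdivided edges, or the deleted paths through $x$, and in a subdivision of $K_5$ no such edges exist. Since induced subgraphs of derived graphs are derived, each $H_x$ is a derived subdivision of $K_4$. By Lemma~\ref{thm:subdivisions-of-K4}, such a subdivision contains four degree-$3$ vertices $a',b',c',d'$ with $a'b',a'c'\in E$ and $a'd',b'c'\notin E$; in particular at least the two edges $a'd'$ and $b'c'$ are subdivided, so $H_x$ is of type 2, 3, or 4.

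With both hypotheses established, Lemma~\ref{lem:subdivisions-of-K5} tells me that $G$ is a type A or a type B subdivision of $K_5$. But Lemmas~\ref{lem:typeA-K5} and~\ref{lem:typeB-K5} assert that neither a type A nor a type B subdivision of $K_5$ is a derived graph, contradicting the choice of $G$. Hence no subdivision of $K_5$ is derived, which is exactly the statement of the theorem.

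I expect the only point requiring genuine care to be the reduction step that supplies the hypotheses of Lemma~\ref{lem:subdivisions-of-K5}: one must check that each $H_x$ is truly an \emph{induced} subdivision of $K_4$, so that derivedness is inherited, and that the characterization in Lemma~\ref{thm:subdivisions-of-K4} indeed leaves only types 2, 3, and 4 (recall that wheels, and subdivisions in which fewer than two edges are subdivided, are not derived graphs). Everything else is a direct chaining of the already-established lemmas, so once this bookkeeping is settled the argument is immediate.
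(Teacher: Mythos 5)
Your proof is correct and follows essentially the same route as the paper's: reduce to the triangle-free case, use Lemma~\ref{thm:subdivisions-of-K4} to force every induced $K_4$-subdivision to be of type 2, 3, or 4, invoke Lemma~\ref{lem:subdivisions-of-K5} to conclude $G$ is of type A or B, and finish with Lemmas~\ref{lem:typeA-K5} and~\ref{lem:typeB-K5}. The extra bookkeeping you supply (triangle-freeness of derived graphs, inducedness of each $H_x$) is sound and only makes explicit what the paper leaves implicit.
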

	
	\begin{proof}
		Let $ G $ be a subdivision of $ K_5 $. If it has a triangle, then it is not a derived graph. So, we may assume that $ G $ is triangle-free. If $ G $ includes a subdivision of $ K_4 $ as an induced subgraph, this subdivision of $K_4 $ must be of type 2, 3, or 4, otherwise, by Lemma~\ref{thm:subdivisions-of-K4}, $ G $ cannot be a derived graph. Thus, by Lemma~\ref{lem:subdivisions-of-K5}, $ G $ is either a type A or a type B subdivision of $ K_5 $. So, the result follows from Lemmas~\ref{lem:typeA-K5} and~\ref{lem:typeB-K5}.
	\end{proof}

		\begin{corollary}
		If $ G $ is a complete graph on $ n $ vertices, where $ n \geq 5 $, then it is is not a weakly pervasive graph.
	\end{corollary}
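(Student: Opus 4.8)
The plan is to reduce the case of a general $K_n$ to the already-settled case $n = 5$ and then invoke the method of Lemma~\ref{theorem:the-method}. That lemma tells us it suffices to show that, for every $n \geq 5$, none of the subdivisions of $K_n$ is a Burling graph. For $n = 5$ this is precisely the content of Theorem~\ref{thm:K5-not-derived}, once we identify derived graphs with Burling graphs via Theorem~\ref{thm:BG=derived}. So the only real work lies in the step up from $5$ to arbitrary $n > 5$.

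For that step I would use that the class of Burling (equivalently, derived) graphs is hereditary, i.e.\ closed under taking induced subgraphs; this is immediate from the definition of derived graphs as induced subgraphs of fully derived graphs, together with Theorem~\ref{thm:BG=derived}. The key combinatorial observation is then that every subdivision of $K_n$ contains an induced subdivision of $K_5$. Concretely, let $S$ be a subdivision of $K_n$, let $v_1, \dots, v_n$ be its \emph{branch vertices} (the vertices of degree $n-1$, which are the images of the vertices of $K_n$; since $n - 1 \geq 4 > 2$ these are distinguishable from the degree-$2$ subdivision vertices), and for each pair $i < j$ let $P_{ij}$ be the path of $S$ replacing the edge $v_iv_j$. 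Choosing any five branch vertices, say $v_1, \dots, v_5$, I would consider the subgraph of $S$ induced on $\{v_1, \dots, v_5\}$ together with all internal vertices of the paths $P_{ij}$ with $i, j \in \{1, \dots, 5\}$.

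The heart of the argument, and the one point requiring care, is to verify that this induced subgraph is exactly the union $\bigcup_{1 \le i < j \le 5} P_{ij}$, with no extra chords, so that it is genuinely a subdivision of $K_5$. This is routine but essential: every edge of $S$ lies on a single path $P_{kl}$; an internal vertex of $P_{kl}$ has both of its neighbors on $P_{kl}$, and a branch vertex $v_i$ is adjacent only to the first vertices of the paths $P_{ij}$. Hence no edge of $S$ incident to a vertex of our chosen set can leave the union of the five chosen paths, and the induced subgraph is precisely $\bigcup_{1 \le i < j \le 5} P_{ij}$, a subdivision of $K_5$.

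Combining these observations finishes the proof: if some subdivision $S$ of $K_n$ were a Burling graph, then by heredity the induced subdivision of $K_5$ inside $S$ would also be a Burling graph, contradicting Theorem~\ref{thm:K5-not-derived}. Thus no subdivision of $K_n$ is a Burling graph, and Lemma~\ref{theorem:the-method} yields that $K_n$ is non-weakly pervasive for every $n \geq 5$. I expect the only genuine obstacle to be the chord-freeness check of the previous paragraph; everything else is a direct appeal to the results already stated.
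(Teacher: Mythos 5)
Your proof is correct and follows essentially the same route as the paper: reduce to $n=5$ via Theorem~\ref{thm:K5-not-derived} and Lemma~\ref{theorem:the-method}, then handle $n>5$ by noting that (subdivisions of) $K_n$ contain induced subdivisions of $K_5$ and that Burling graphs are hereditary. The paper states this reduction in one sentence; you merely spell out the chord-freeness verification that the paper leaves implicit.
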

	\begin{proof}
		For $ n = 5 $, the result follows from Theorem~\ref{thm:K5-not-derived}, using Theorem~\ref{theorem:the-method}. For $ n \geq 5 $ it is enough to notice that $ G $ includes a subdivision of $K_5 $ as an induced subgraph.
	\end{proof}
	
		\begin{figure}
		\centering
		\vspace*{-1.3cm}
		\includegraphics[width=11cm]{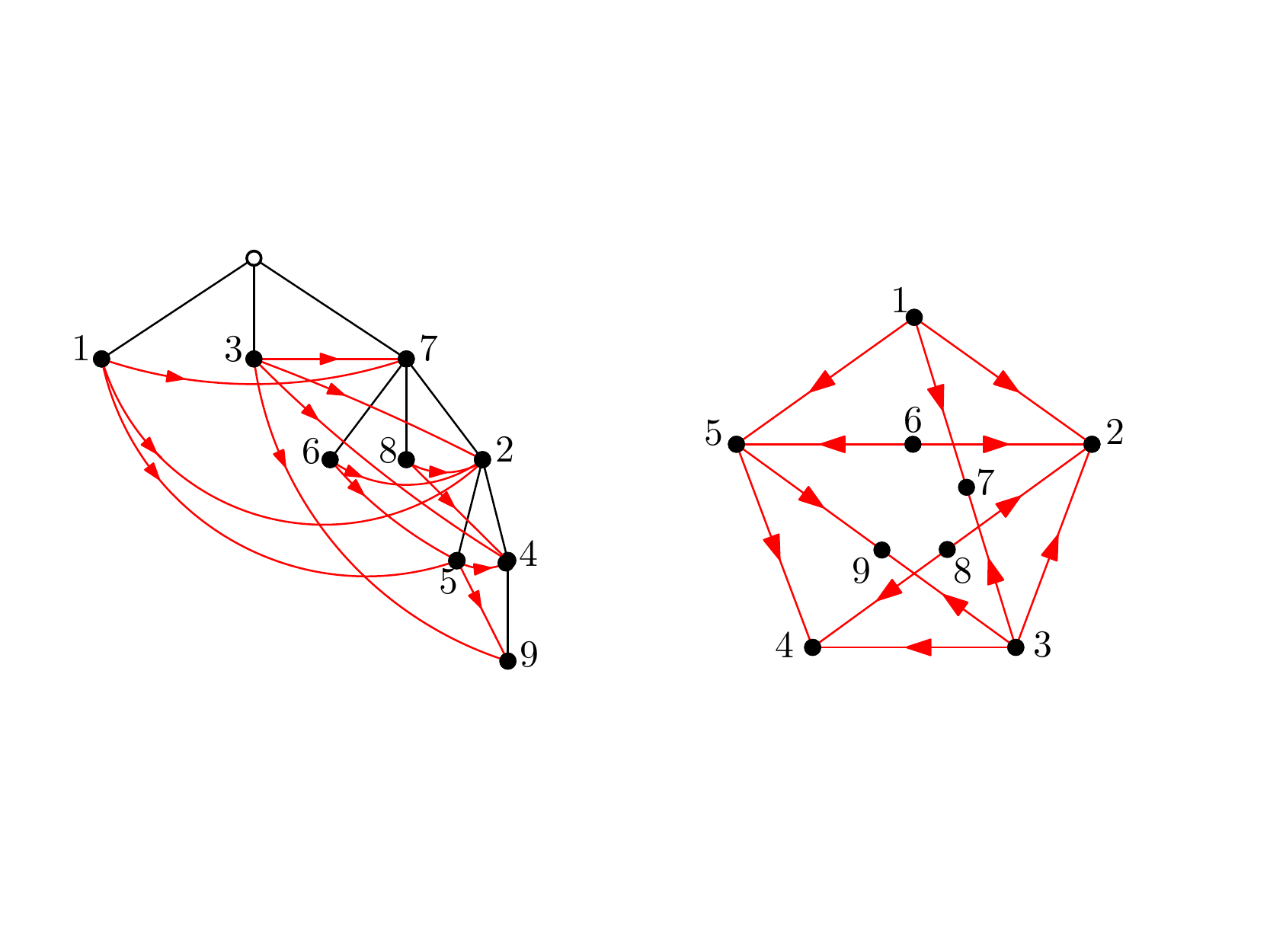}
		\vspace*{-2.3cm}
		\caption{A type B subdivision of $ K_5 $ minus one vertex, shown as a derived graph} \label{pic:K5sdB-minus-1vertex}
	\end{figure}

	\begin{remark}
		Let $ G $ be the graph shown in Figure~\ref{pic:k5sdB}. This graphs is minimally non-Burling: if one removes one vertex of it, it becomes a Burling graph. By deleting a vertex of degree 4 (e.g.\ vertex 1), one obtains a type 3 subdivision of $ K_4 $ which is a Burling graph, and by deleting a vertex of degree 2 (e.g.\ vertex 10) on obtains a graph isomorphic to the underlying graphs of the derived graph represented in Figure~\ref{pic:K5sdB-minus-1vertex}.
	\end{remark}

	\section{Necklace graphs} \label{section:necklace}
	
	We remind that connecting two vertices by a path of length 0 means identifying the two vertices. 
	
	Let $ B_1, B_2, \dots, B_m $, ($ m \geq 2 $), be cycles of length at least~4. For $ 1 \leq i \leq m $, let $ a_i $ and $ b_i $ be two non-adjacent vertices of $ B_i $. For $ 1\leq i \leq m $, connect $ b_i $ and $ a_{i+1} $ by a path of length at least~0 (where $ a_{m+1} = a_1 $). The resulting graph $G$ is called an \emph{$ m$-necklace}. A \emph{necklace} graph is a graph which is an $ m $-necklace for some $ m \geq 2 $. Each $ B_i $ is called a \emph{bead} of $ G $. We say that $ B_i $ is a \emph{short} bead, if $ a_i $ and $b_i $ have a common neighbor. Notice that necklaces are triangle-free graphs. See Figure~\ref{fig:Necklace-example}.

	\begin{figure}
		\centering
		\vspace*{-2cm}
		\includegraphics[width=8cm]{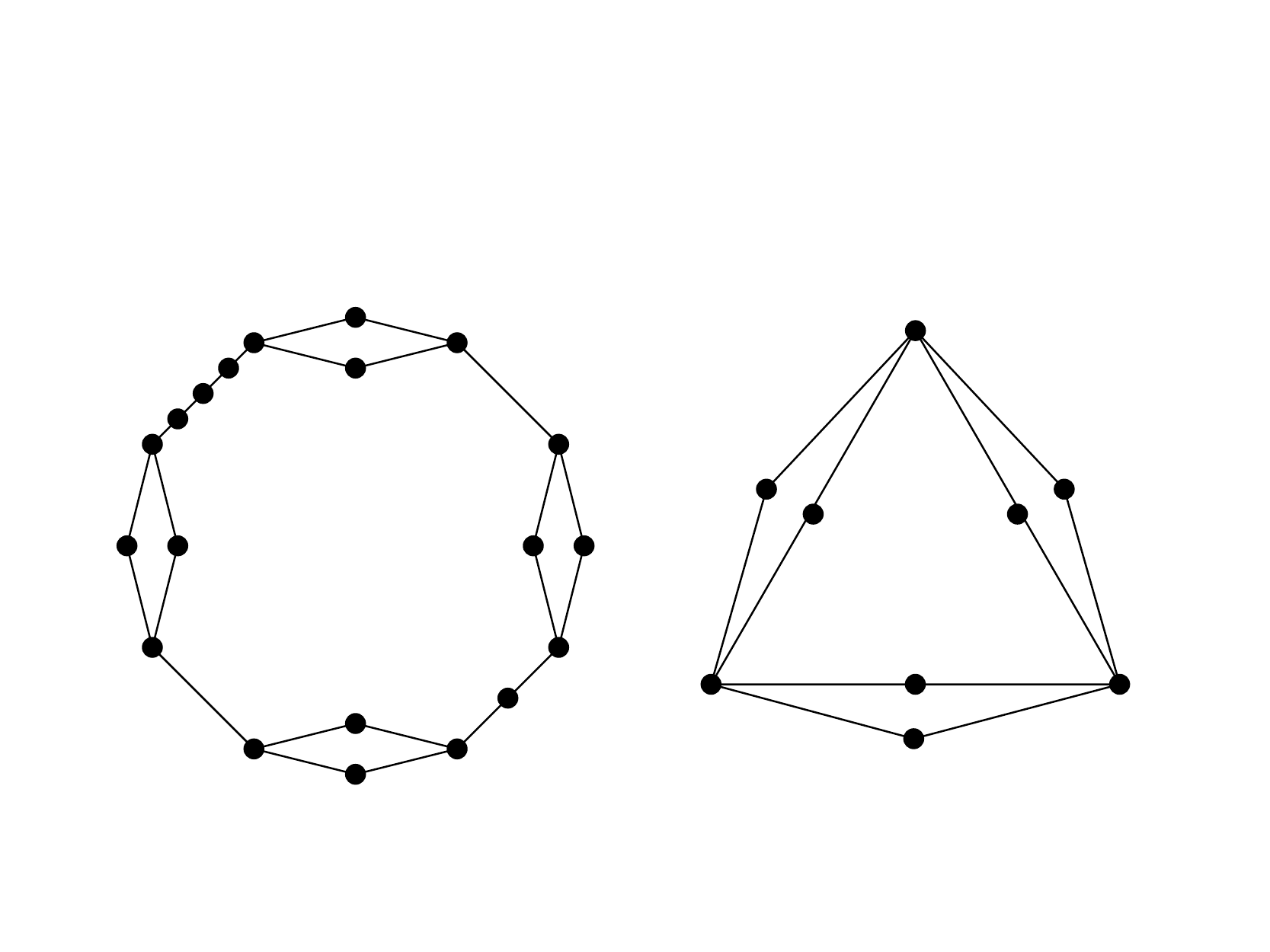}
		\vspace*{-1cm}
		\caption{A 4-necklace (left) and a 3-necklace (right). Any edge of the two graphs can be subdivided.} \label{fig:Necklace-example}
	\end{figure}
	
	In this section, we characterize the necklaces which are Burling graph. Table~\ref{tab:necklaces} shows a summary of the results of this section.

	\begin{table}[h]
		\centering
		\begin{tabular}{|C{.1\textwidth}|C{.4\textwidth}|C{.4\textwidth}|}
			\hline
			 & without star cutset & with star cutset \\
			 \hline  
$ m = 2 $ & {Burling graph  $\Leftrightarrow$ the two beads have a common vertex (Lemma~\ref{lem:2-necklace-BG-iff-beads-common-vertex}) } & always	 Burling graph (Lemma~\ref{lem:2-necklace-with-short-bead}) \\
			 \hline
			 $ m = 3 $ & never Burling graph (Lemma~\ref{lem:3-necklace-with-short-bead}) & Burling graph $\Leftrightarrow$ there exists a short bead such that the two other beads have a common vertex (Lemma~\ref{lem:3-necklace-with-short-bead}) \\ 
			 \hline 
			 $ m \geq 4 $ & \multicolumn{2}{c|}{never Burling graph (Lemma~\ref{lem:4-necklaces})} \\
			 \hline
		\end{tabular}
		\caption{$m$-Necklaces and the class of Burling graphs}
		\label{tab:necklaces}
	\end{table}

	\begin{lemma} \label{lem:starcutset-iff-short-bead}
		A necklace graph $ G $ has a star cutset if and only if it has a short bead.
	\end{lemma}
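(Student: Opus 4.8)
The plan is to prove both implications by pinning down where a star cutset of a necklace can possibly sit, exploiting two structural facts: a necklace is $2$-connected, and all its vertices have degree $2$ except the \emph{junctions} $a_1,b_1,\dots,a_m,b_m$, which have degree $3$ or $4$. Throughout I read ``star cutset'' as a cutset $S$ with $v \in S \subseteq N[v]$ for some centre $v$; the argument I give also covers the narrower reading where $S = N[v]$.

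For the easy direction, suppose some bead $B_i$ is short, and let $w$ be a common neighbour of $a_i$ and $b_i$. Since $w$ is an internal vertex of the cycle $B_i$ it has degree $2$, so $N[w] = \{w, a_i, b_i\}$, which is a star centred at $w$. I would then check that deleting it disconnects $G$: removing the two junctions $a_i$ and $b_i$ detaches the whole interior of $B_i$ from the remaining $m-1 \ge 1$ beads, and because $|B_i| \ge 4$ the long arc of $B_i \setminus \{a_i, b_i\}$ is a nonempty component distinct from the (still connected) rest of the necklace. Hence $\{w, a_i, b_i\}$ is a star cutset.

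For the converse, suppose $G$ has a star cutset $S$ with centre $v$; the aim is to force a short bead. I would first record that $G$ is $2$-connected, since the beads are cycles and the connecting paths form a single cyclic chain, giving two internally disjoint routes between any two vertices; this already rules out $S=\{v\}$. The argument then splits on $\deg v$. If $v$ is a junction (degree $3$ or $4$), I claim no subset of $N[v]$ is a cutset: the two junctions of any bead are non-adjacent, so $N[v]$ contains at most one junction of each bead and can therefore never delete both $a_j,b_j$, i.e.\ never detach a bead interior; moreover deleting vertices confined to one junction severs only one link of the cyclic chain, leaving the route around the other side intact. This contradicts $S$ being a cutset, so $\deg v = 2$. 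Writing $N(v) = \{x,y\}$, $2$-connectivity rules out $\{v\},\{v,x\},\{v,y\}$, so necessarily $S = \{v,x,y\}$. Finally I would show $\{v,x,y\}$ disconnects $G$ only when $\{x,y\}$ is exactly the junction pair $\{a_i,b_i\}$ of the bead carrying $v$: if at most one of $x,y$ is a junction, the surviving arc of that bead still contains a junction and stays attached to the rest through the cyclic route, so the only extra piece is the isolated $v$ and $G \setminus \{v,x,y\}$ is connected. Thus $x,y$ are the two junctions $a_i,b_i$, making $v$ a common neighbour of $a_i$ and $b_i$, i.e.\ $B_i$ is short.

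The main obstacle is this converse, specifically the bookkeeping that removals localised at a junction never disconnect the necklace. The clean way to organise it is to view the necklace as a cycle whose ``super-edges'' are the beads together with their links: deleting $N[v]$ for a junction $v$ touches at most two consecutive super-edges and cannot sever both junctions of one bead, so a detour around the cycle always survives. I would also keep an eye on the degenerate cases ($m=2$, and length-$0$ connecting paths that merge two junctions into one degree-$4$ vertex), but the same two-routes-around-the-cycle principle disposes of them.
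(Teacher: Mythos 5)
Your plan---classify potential star cutset centres by degree, rule out junction centres, and force a degree-2 centre to be a common neighbour of the two special vertices of a single bead---is the right strategy, and your forward direction is correct. The paper itself dismisses the converse with ``it is easy to see,'' so the detail you aim to supply is exactly what is missing there; unfortunately, two steps of your converse fail as written.

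First, ``$2$-connectivity rules out $\{v\},\{v,x\},\{v,y\}$'' is not valid reasoning: $2$-connectivity forbids only cutsets of size one, and necklaces do have $2$-element cutsets, e.g.\ $\{a_i,b_i\}$ for any bead. What you actually need is that no \emph{adjacent} pair is a cutset, and that is a fact about necklaces specifically, not about triangle-free $2$-connected graphs of minimum degree $2$ in general: in a domino (two $4$-cycles sharing an edge $xy$), the adjacent pair $\{x,y\}$ is a cutset. So this step needs its own argument; alternatively you can bypass it by noting that in a triangle-free graph of minimum degree at least $2$, no component of $G\setminus\{v,x\}$ can be the single vertex $y$ (that would force the triangle $vxy$), so if $\{v,x\}$ were a cutset then the full star $\{v,x,y\}$ would be one as well, reducing everything to the case you do treat. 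Second, your closing dichotomy is not exhaustive: from the failure of ``at most one of $x,y$ is a junction'' you conclude that $x,y$ are the two junctions of the bead containing $v$, but this skips the configuration in which $v$ is the middle vertex of a connecting path of length $2$, so that $x=b_j$ and $y=a_{j+1}$ are junctions of two \emph{different} beads. In that configuration $\{v,x,y\}$ is not a cutset (each of the two beads keeps one junction and everything stays attached around the cyclic chain), but your argument never establishes this, and without it the conclusion ``hence $B_i$ is short'' does not follow. Both gaps are fixable by the same ``super-edge'' bookkeeping you sketch for the junction case, but as written the converse is incomplete.
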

	\begin{proof}
		If $ G $ has a short bead $ B_i $, the common neighbor of $a_i $ and $ b_i $ is the center of a star cutset.
		On the other hand, if $ G $ has no short bead, then it is easy to see that it does not have a star cutset. 
	\end{proof}
	
%
%

\begin{lemma}
  \label{lem:2-necklace-common-vertex-beads-isBG}
  \label{lem:2-necklace-BG-iff-beads-common-vertex}
  \label{lem:2-necklace-with-short-bead}
  A 2-necklace graph $ G $ is a derived graph if and only if it has a
  star cutset or its two beads have a common vertex.
\end{lemma}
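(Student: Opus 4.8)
The plan is to prove the two implications separately and, for the ``if'' direction, to split according to which sufficient condition holds. Throughout I use the reformulation that, by Lemma~\ref{lem:starcutset-iff-short-bead}, ``$G$ has a star cutset'' is the same as ``$G$ has a short bead.'' Write $B_1,B_2$ for the beads, with special vertices $a_1,b_1\in B_1$ and $a_2,b_2\in B_2$, and let $P_1$ (joining $b_1$ to $a_2$) and $P_2$ (joining $b_2$ to $a_1$) be the connecting paths; the beads have a common vertex precisely when $P_1$ or $P_2$ has length $0$.

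For ``derived $\Rightarrow$ star cutset or common vertex'' I would prove the contrapositive. Assume $G$ has no short bead and the beads share no vertex. Then $P_1,P_2$ both have positive length, so $B_1$ and $B_2$ are vertex-disjoint holes; consequently no vertex of $G$ lies on every cycle, and therefore $G$ is not a chandelier (recall that a chandelier always has a vertex lying on every cycle). Every vertex of $G$ has degree $2$ or $3$, so $G$ has no vertex of degree at most $1$; and having no short bead, $G$ has no star cutset, hence no full in-star cutset for any orientation. By Theorem~\ref{thm:starcutset-chandelier-degree1}, no orientation of $G$ is an oriented Burling graph, so $G$ is not a derived graph.

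For the ``if'' direction when the beads share a vertex, say $b_1=a_2=:w$ (the other case being symmetric), I would show $G$ is a chandelier. Deleting $w$ turns each bead into a path, and $P_2$ joins the internal vertex $a_1$ of the first path to the internal vertex $b_2$ of the second (here we use that $a_1,b_1$ and $a_2,b_2$ are non-adjacent, so $a_1,b_2$ are not neighbours of $w$); the result $G-w$ is a tree whose leaves are exactly the four neighbours of $w$. Orienting $G-w$ as an in-tree towards an internal root makes these four vertices its leaves, and reattaching $w$ as a sink adjacent to them exhibits $G$ as an oriented chandelier. By Lemma~\ref{thm:chandeliers-are-Burling}, $G$ is a Burling graph. (When both $P_1,P_2$ have length $0$ the tree $G-w$ is a spider and the same argument works.)

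The remaining case, ``short bead $\Rightarrow$ derived,'' is where the real work lies. A short bead $B_1$ has a common neighbour $c$ of $a_1$ and $b_1$, i.e.\ its short arc has length exactly $2$. I would build an explicit Burling tree, orienting $B_1$ so that $c$ is its pivot and $a_1,b_1$ are its antennas, hence sources, while the two outer routes (the long arc of $B_1$, and the concatenation of $P_1$, an arc of $B_2$, and $P_2$) are directed down to sinks. The length-$2$ short arc is exactly what makes this possible: it keeps the thetas with apexes $a_1,b_1$ from being \emph{long}, so Lemma~\ref{lem:theta-pivot} does not force $a_1$ or $b_1$ to be a pivot, and they are free to be sources that connect outward. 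One then checks, using Lemma~\ref{thm:holes-4-extrema}, that every hole has two sources and two sinks with the two antennas sharing a pivot, and assembles the local data into one Burling tree (most clearly presented in a figure). The main obstacle is exactly this assembly: verifying that the hole-by-hole orientation data come from a single global Burling tree. Since subdivision of bottom arcs and top-subdivision of source arcs preserve derivedness by Lemma~\ref{thm:subdivision}, I expect to reduce to a bounded-size representative short-bead necklace, so that one explicit tree settles the whole family.
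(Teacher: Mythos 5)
Your overall decomposition coincides with the paper's: you prove the forward direction via Theorem~\ref{thm:starcutset-chandelier-degree1} (no star cutset and no degree-$\le 1$ vertex force a chandelier, hence a vertex lying on every cycle, hence in both beads), and you handle the common-vertex case by exhibiting $G$ as a chandelier and invoking Lemma~\ref{thm:chandeliers-are-Burling}. Both of these parts are correct, and in fact somewhat more detailed than the paper's own text (your explicit in-tree orientation of $G-w$, including the spider case when both connecting paths have length $0$, is carefully done; the observation that a full in-star cutset of any orientation is a star cutset of the underlying graph is also the right justification).

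The problem is the case you yourself call ``where the real work lies'': short bead $\Rightarrow$ derived. Here you never produce the object the claim needs. The paper's proof of this case is precisely an explicit presentation of a bounded-size representative (its Figure~\ref{fig:2-necklace-as-BG}): a concrete oriented derived graph for a smallest 2-necklace with a short bead, together with a verification that every edge that may need to be subdivided is a bottom arc or a top arc out of a source, so that Lemma~\ref{thm:subdivision} then yields every member of the family. You state this exact plan, but you flag ``assembling the hole-by-hole orientation data into a single global Burling tree'' as the main obstacle and leave it unresolved (``I expect to reduce\dots''). The supporting remarks you offer cannot close this: Lemma~\ref{thm:holes-4-extrema} and the non-applicability of Lemma~\ref{lem:theta-pivot} are necessary conditions only, and no accumulation of necessary conditions can certify that a graph \emph{is} derived. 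So the ``if'' direction in the star-cutset case — the only part of the lemma requiring a genuine construction — remains unproven in your proposal; to complete it you must actually exhibit the Burling tree (or oriented derived graph) for the representative 2-necklace with a short bead and check the top-arc/bottom-arc conditions on the subdividable edges.
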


\begin{proof}
  First, suppose that $ G $ is a derived graph. If it has a star
  cutset, we are done. Otherwise, because it has no vertex of
  degree~1, by Theorem~\ref{thm:starcutset-chandelier-degree1}, it
  should be a chandelier. In particular, there exists a vertex $v $ in
  $ G $ which is contained in all cycles of $ G $. So, the two beads
  of $ G $ both contain $ v $.

  Conversely, if the beads of $ G $ have a common vertex, then $ G $
  is a chandelier (with the pivot being a common vertex of the two
  beads). So, it is a Burling graph by
  Lemma~\ref{thm:chandeliers-are-Burling}. If the beads of $ G $ do
  not have a common vertex, then $ G $ has a start cutset, and thus by
  Lemma~\ref{lem:starcutset-iff-short-bead}, it has a short
  bead. Therefore, $ G $ can be obtained from the underlying graph of
  the graph shown in Figure~\ref{fig:2-necklace-as-BG} (right) by
  subdividing some (possibly none) of the dashed arcs. In
  Figure~\ref{fig:2-necklace-as-BG}, a presentation of the graph on
  the right as a derived graph is shown. Notice that all the dashed
  arcs are either a top-arc starting in a source of $ G $ or a
  bottom-arc of $ G $, so by Lemma~\ref{thm:subdivision}, we may
  subdivide them. So, every 2-necklace with a short bead is a derived
  graph.
  	\begin{figure}
  	\centering
  	\vspace*{-1.5cm}
  	\includegraphics[width=10cm]{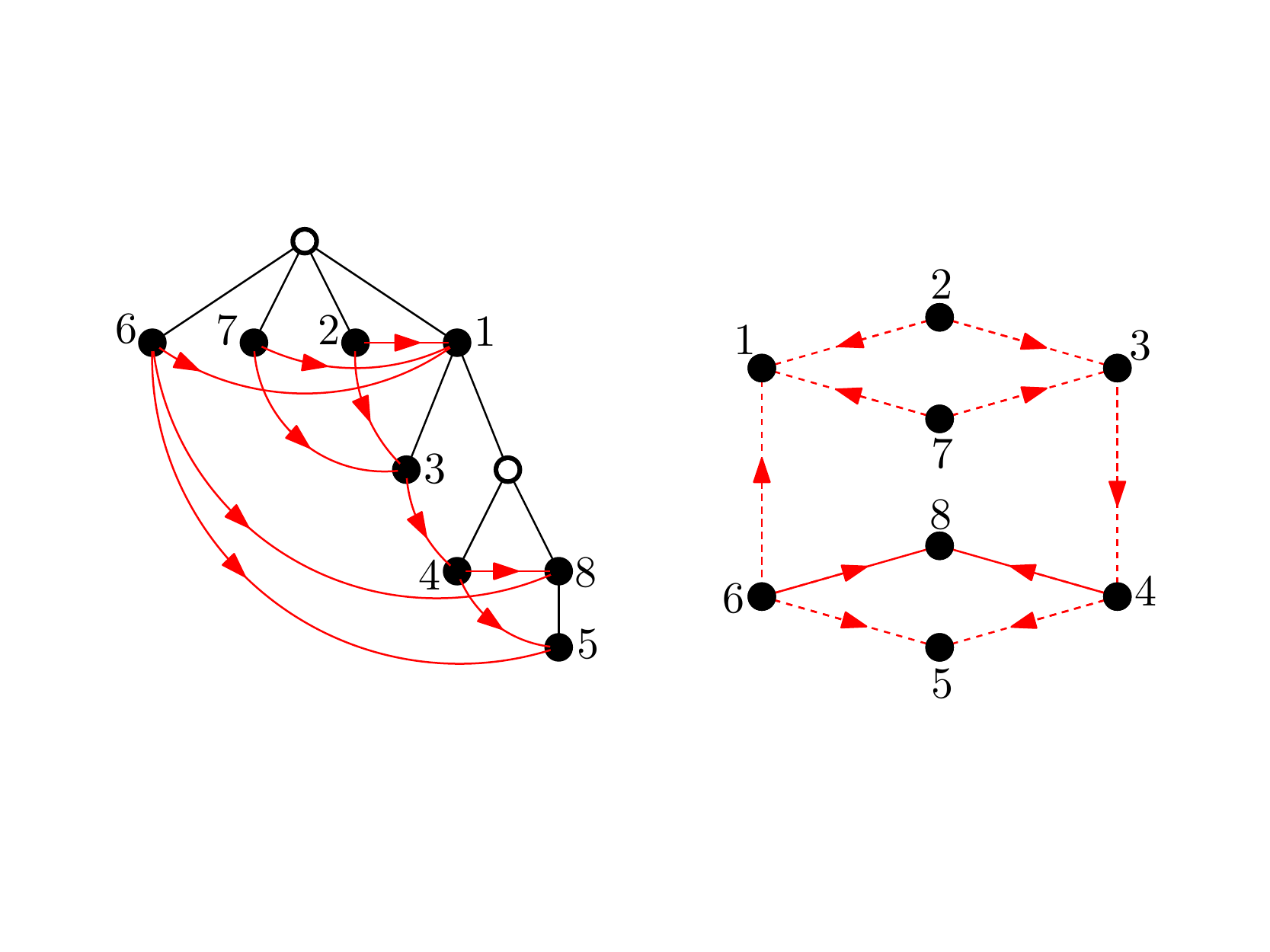}
  	\vspace*{-2cm}
  	\caption{A 2-necklace presented as a derived graph} \label{fig:2-necklace-as-BG}
  \end{figure} 
%
	\end{proof}

	\smallskip
	\begin{lemma} \label{lem:3-necklace-with-short-bead}
		Let $ G $ be a 3-necklace. The graph $ G $ is a derived graph if and only if 
		it has at least one short bead $ B $ and the two other beads have a vertex in common. 
		
		In particular, if $ G $ has no star cutset, then it is not a derived graph.
	\end{lemma}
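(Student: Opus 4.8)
The plan is to fix an orientation making $G$ an oriented derived graph and to read off the structure from the four special vertices (pivot, two antennas, bottom) of the holes of $G$. Write $B_1,B_2,B_3$ for the beads, $a_i,b_i$ for the attaching vertices of $B_i$, and $P_i$ for the path joining $b_i$ to $a_{i+1}$ (indices taken cyclically); then $B_{i+1}$ and $B_{i+2}$ share a vertex exactly when the path directly joining them has length~$0$, and no vertex can lie in all three beads (that would force $a_i=b_i$ for some $i$). Call $B_i$ \emph{non-short} when both of its $a_i$--$b_i$ arcs have length at least~$3$. For the ``in particular'' clause (and to get a short bead in the main direction), note that a $3$-necklace has no vertex of degree at most~$1$ and is not a chandelier, so by Theorem~\ref{thm:starcutset-chandelier-degree1} a derived $3$-necklace has a full in-star cutset, hence a star cutset, hence a short bead by Lemma~\ref{lem:starcutset-iff-short-bead}. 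I would then work with the \emph{big holes} $O$ of $G$, namely those using exactly one $a_i$--$b_i$ arc $\gamma_i$ of each bead together with $P_1,P_2,P_3$.

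The first key observation comes from Lemma~\ref{lem:theta-pivot}. A big hole $O$ is, for each non-short bead $B_i$, a hole of the theta with apexes $a_i,b_i$ formed by the two arcs of $B_i$ and the ``way-around'' path through the other two beads; this theta is long (its three paths have length at least~$3$), so the pivot $p_O$ of $O$ is one of $a_i,b_i$. Thus $p_O$ lies in every non-short bead. Consequently at most two beads are non-short (three would put $p_O$ in $B_1\cap B_2\cap B_3=\varnothing$), which already kills the case of no short bead; and if exactly one bead is short, the two non-short beads both contain $p_O$, so they share a vertex and the short bead is the required witness.

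The main obstacle is the case of at least two short beads, where there are too few long thetas to force $p_O$ into two beads. Here I would use an extremum count. Choose $O$ so that for each short bead the \emph{discarded} arc $\gamma_i'$ is a short arc, of length~$2$ with a single interior vertex $w_i$ (always possible). By Lemma~\ref{thm:holes-4-extrema} and the description of holes, the only extrema of $O$ are its pivot $p_O$, the two antennas $s_1,s_2$ (the neighbours of $p_O$), and the bottom $t$; every other vertex of $O$ is transitive. Hence an arc $\gamma_i$ whose interior contains none of $p_O,s_1,s_2,t$ is monotone, and then all four extrema of the hole $B_i$ would lie in $\{a_i,b_i\}\cup\{w_i\}$, which is impossible; so every monotone arc belongs to a non-short bead. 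Now the consecutive triple $s_1,p_O,s_2$ meets the interior of at most two arcs, and of two only when $p_O$ is a vertex common to two beads (equivalently some $P_i=0$), while $t$ meets at most one further arc. Therefore, if no two beads shared a vertex, some arc would be free and hence monotone, hence non-short; combined with the bound ``at most one non-short bead'' from the paragraph above, that free arc would be the unique non-short bead's arc $\gamma_3$, contradicting that $p_O\in\{a_3,b_3\}$ puts an antenna in the interior of $\gamma_3$. So two beads must share a vertex, and inspecting where $p_O$ sits (at an apex of the non-short bead when one exists, anywhere when all beads are short) shows the shared pair is adjacent to a short bead, which is the witness. This extremum-counting step, where the hypothesis ``$\ge 2$ short beads'' is finally defeated, is the delicate part.

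For the converse I would proceed exactly as for $2$-necklaces (Lemma~\ref{lem:2-necklace-BG-iff-beads-common-vertex}): exhibit the minimal necklace with a short bead $B_1$ and with $B_2,B_3$ sharing a vertex as an explicit oriented derived graph, the shared vertex serving as a common pivot of $B_2$ and $B_3$ in the manner of a chandelier (cf.\ Lemma~\ref{thm:chandeliers-are-Burling}); then recover every necklace of this shape by subdividing the arcs drawn dashed. The only verification needed is that each arc to be subdivided is either a bottom arc or a top arc issued from a source, so that Lemma~\ref{thm:subdivision} applies and keeps the result a derived graph; this is the same bookkeeping already done in the $2$-necklace case.
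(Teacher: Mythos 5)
Your forward direction is correct, but it follows a genuinely different route from the paper's. The paper runs a single unified computation: in each bead $B_i$ it picks an arc containing an extremum $x_i$ of the hole $B_i$ (possible since every hole has four extrema by Lemma~\ref{thm:holes-4-extrema}, and the attachment vertices $a_i,b_i$ can account for only two), forms the big hole $C$ through these three arcs, and observes that its four extrema are exactly $x_1,x_2,x_3$ plus one more vertex $x_4$. Since three extrema of any hole (antenna--pivot--antenna) are consecutive and no two of $x_1,x_2,x_3$ are adjacent, $x_4$ must be the pivot and a common neighbour of two of them; this simultaneously forces $x_4$ to be a vertex shared by two beads and makes $x_1,x_2$ the antennas of $C$, whence neither arc of the third bead contains a source, so that bead's antennas are its attachment vertices, whose common neighbour makes it short. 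You instead split on the number of short beads: Theorem~\ref{thm:starcutset-chandelier-degree1} with Lemma~\ref{lem:starcutset-iff-short-bead} produces a short bead, the long-theta Lemma~\ref{lem:theta-pivot} places the pivot of the big hole inside every non-short bead (settling the case of exactly one short bead), and an extremum count handles two or three short beads. I checked your compressed final ``inspection'': it does go through, since with exactly two short beads the pivot sits at an apex of the non-short bead, and covering the two long arcs of the short beads with the remaining extrema forces the pivot to be shared between the non-short bead and one short bead, leaving the other short bead as witness. So your argument is sound; it is longer and case-based, but it shows the theta-pivot lemma can substitute for part of the hole bookkeeping, while the paper extracts both conclusions (shared vertex and short bead) from one hole.

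The converse, however, has a genuine gap. You propose to exhibit one minimal derived 3-necklace and recover every admissible $G$ by subdivision alone (Lemma~\ref{thm:subdivision}), ``exactly as for 2-necklaces''. That cannot cover the whole family here: the admissible necklaces include those in which the short bead $B_1$ also touches $B_2$ or $B_3$ (connecting paths of length $0$) as well as those in which these connecting paths are long. Subdivision never identifies two vertices, and starting from a base graph in which the identification is already made never separates them, so no single base graph generates all cases by subdivision only. (This issue does not arise for 2-necklaces, because there the relevant case has both connecting paths of length at least $1$.) This is precisely why the paper's construction first possibly contracts the two connecting arcs, invoking Lemma~\ref{thm:contraction} to guarantee that contraction preserves derivedness and the top/bottom status of the remaining arcs, and only then subdivides. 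Your plan needs either this contraction step or several base graphs, one for each pattern of length-$0$ connecting paths.
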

	\begin{proof}
		First assume that $ G $ is a derived graph and let $B_i $, $ 1\leq i \leq 3 $ be its beads. There is an orientation of $ G $ such that $ G $ with this orientation is an oriented derived graph. For each bead $ B_i $, let $ P_i $ and $ Q_i $ denote the internal vertices of the two paths between $ a_i $ and $ b_i $ in $ B_i $. We call a vertex of a hole an \emph{extremum} if it is a sink or source in that hole. Notice that each $ B_i $ is a hole in $ G $, so by Lemma~\ref{thm:holes-4-extrema} it should have four extrema, and thus there is at least one of $ P_i $ and $ Q_i $, say $P_i $, which contains at least one extremum of $ B_i $. Denote this extremum by $ x_i$. Let $ C $ be the cycle in $ G $ obtained by removing the vertices of $ Q_1 $, $ Q_2 $, and $Q_3 $. Notice that $ C $ is a hole in $ G $, and that $ x_1 $, $ x_2 $, and $x_3 $ are extrema for it. Let $ x_4 $ be the fourth extremum of $ C$. Notice that three of the extrema of $ C $ should be consecutive vertices, and that no two vertices among $ x_1 $, $ x_2 $, and $ x_3 $ are neighbors. So, $ x_4 $ should be the common neighbor of two of  $ x_1 $, $ x_2 $, and $ x_3 $, and $ x_4 $ cannot be inside $ Q_i$'s or $P_i$'s. We assume without loss of generality that $ x_4 $ is a common neighbor of $ x_1 $ and $x_2$. So, $ B_1 $ and $B_2 $ have a common vertex which is $a_2 = b_1 $. Thus, $x_4 $ is the common vertex of $ B_1 $ and $B_2 $. This implies that $ x_1 $ and $ x_2 $ are the antennas of the hole $ C$. So, $ C $ cannot have another antenna. In particular, $ P_3 $ contains no source. Also, $ Q_3 $ contains no source, because we can repeat the same argument by exchanging the role of $P_3 $ and $Q_3 $. Thus, the antennas of $ B_3 $ should be $ a_3 $ and $ b_3 $. Hence, they should have a common neighbor, and therefore $ B_3 $ is a short bead. 
		
		Now, suppose that $ G $ has two beads $ B_1 $ and $ B_2 $ with a common vertex, and a short bead $ B_3 $. In such case an orientation of $ G $ can be obtained from the graph in Figure~\ref{fig:3-necklace-as-BG} by first possibly contracting arc 6-8 or both arcs 6-8 and 7-9, and then subdividing some of the dashed arcs (including 6-8 and 7-9, if they are not contracted). By Lemma~\ref{thm:contraction}, we can contract one or both arcs 6-8 and 7-9, preserving the top-arcs and bottom-arcs. Then, all the dashed edges (including 6-8 and 7-9, if they are not contracted) are top-arcs starting at a source of the graph or bottom-arcs of the graph. So, we can subdivide them as many times as needed. So $ G $ is a derived graph. 
		\begin{figure}
			\centering
			\vspace*{-1cm}
			\includegraphics[width=11cm]{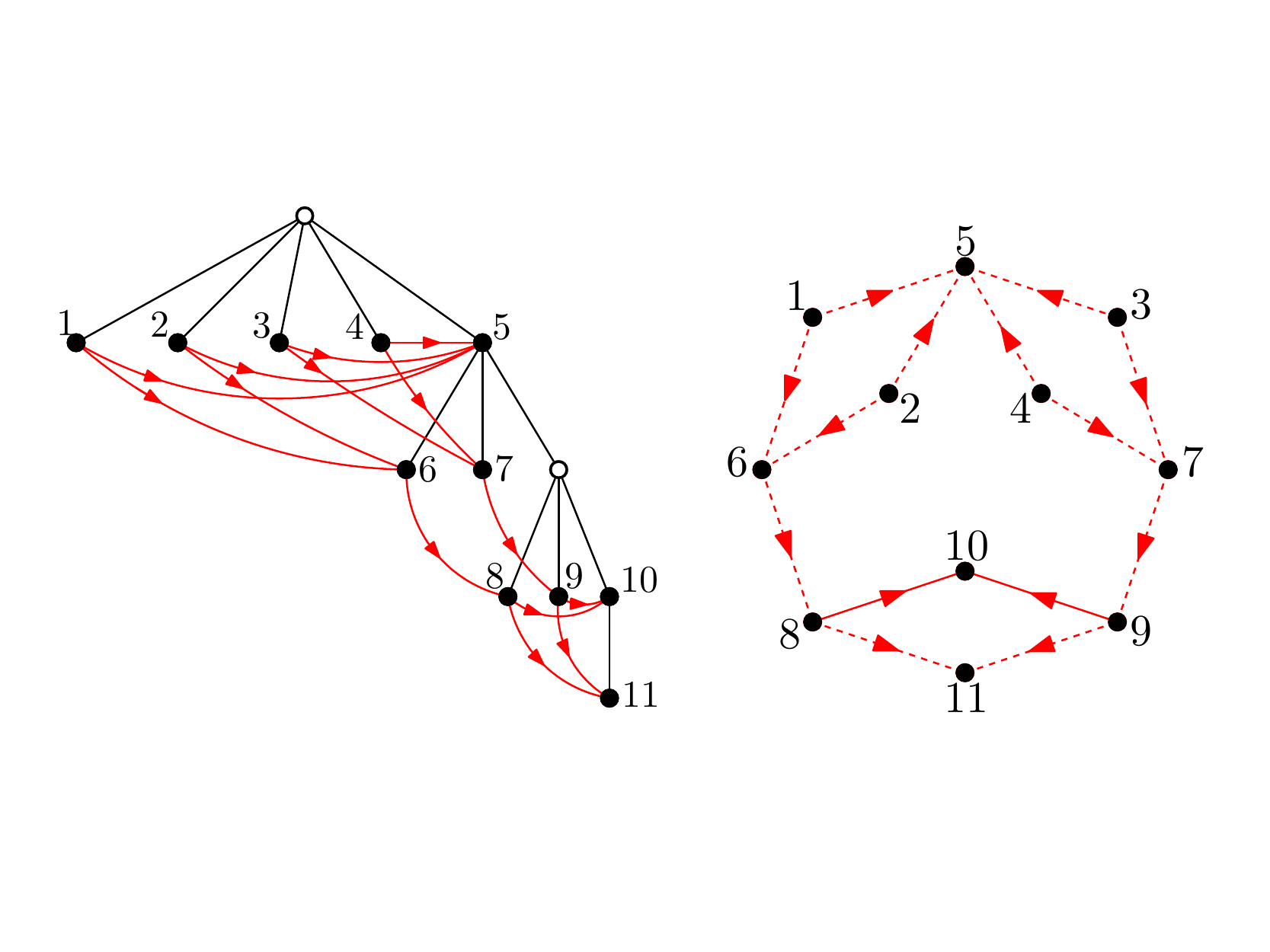}
			\vspace*{-2cm}
			\caption{A 3-necklace presented as a derived graph} \label{fig:3-necklace-as-BG}
		\end{figure}
	
	Finally, notice that if $ G $ has no star cutset, then by Lemma~\ref{lem:starcutset-iff-short-bead} it cannot have a short bead, so it is not a derived graph. 
	\end{proof}
	
	\begin{lemma} \label{lem:4-necklaces}
		Let $ G $ be an $m $-necklace graph. If $ m \geq 4 $, then $ G $ is not a derived graph.
	\end{lemma}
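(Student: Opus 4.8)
The plan is to run the extremum-counting argument of Lemma~\ref{lem:3-necklace-with-short-bead}, but to push the count one bead further so that it collapses completely once $m\geq 4$. Suppose for contradiction that $G$ is an $m$-necklace with $m\geq 4$ that is a derived graph, and fix an orientation making it an oriented derived graph. I keep the notation $a_i,b_i,P_i,Q_i$ of that earlier proof, where $P_i$ and $Q_i$ denote the internal vertices of the two $a_i$--$b_i$ paths of the bead $B_i$.

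First I would observe, exactly as in the $3$-necklace case, that each bead $B_i$ is a hole, so by Lemma~\ref{thm:holes-4-extrema} it has exactly four extrema (two sources and two sinks). Since at most two of these can be $a_i$ or $b_i$, at least one of $P_i,Q_i$ contains an internal extremum of $B_i$; after relabelling the two paths I may assume it lies in $P_i$, and I call it $x_i$. Because internal vertices of distinct beads are never adjacent (they are separated by the $a_j,b_j$ and the connecting paths), the vertices $x_1,\dots,x_m$ are pairwise distinct and pairwise non-adjacent. Next I form the global hole $C$ obtained by going once around the necklace, using the path $P_i$ inside each bead together with all the connecting paths; this is a chordless cycle (of length at least $2m$). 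Since $C$ uses $P_i$ and $x_i$ is internal to $P_i$, the two neighbours of $x_i$ in $C$ are exactly its two neighbours in $B_i$, so $x_i$ has the same local orientation and is again an extremum, now of $C$. Thus $C$ has at least $m$ distinct extrema, and Lemma~\ref{thm:holes-4-extrema} applied to $C$ (which has exactly four) forces $m\leq 4$, disposing of every case $m\geq 5$.

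It remains to treat $m=4$, which I expect to be the only delicate point. Now $x_1,x_2,x_3,x_4$ are \emph{all four} extrema of $C$, so in particular no further extremum hides on a connecting path or at some $a_i$ or $b_i$. By Lemma~\ref{thm:holes-4-extrema}, two of the $x_j$ are the sources of $C$, and their common neighbour is one of the two sinks of $C$, hence again one of the $x_j$. That common neighbour would then be adjacent to two of the $x$'s, contradicting the fact that the $x_i$ lie in distinct beads and are therefore pairwise non-adjacent. Since the cases $m\geq 5$ were already excluded, this finishes the proof.

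The main thing to get right is precisely this $m=4$ bookkeeping: I must argue that \emph{no} fourth extremum can escape to a connecting path or to a shared $a_i/b_i$ vertex. This escape route is exactly what lets a $3$-necklace with a short bead and two beads sharing a vertex be Burling (there only three extrema are pinned to beads, leaving one free to act as the pivot at a shared vertex), so the heart of the argument is the observation that with four beads the count ``$C$ has at least $m$ extrema, all internal to distinct beads'' leaves no such slack.
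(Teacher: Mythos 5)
Your proof is correct and is essentially identical to the paper's: both pin an internal extremum $x_i$ of each bead to one of its paths, transfer these to the global hole $C$ through the $P_i$'s, use Lemma~\ref{thm:holes-4-extrema} to force $m\leq 4$, and then kill $m=4$ via the source--sink--source adjacency (the two sources' common sink neighbour), which is impossible since the $x_i$ lie in distinct beads and are pairwise non-adjacent. The paper phrases this last step as ``three of the extrema of $C$ must be consecutive,'' but that is the same observation as yours.
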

	\begin{proof}
		Assume for the sake of contradiction that $ G $ is a derived graph. So, it is the underlying graph of an oriented derived graph. Consider this orientation on $ G $. Let $ B_1, B_2, \dots, B_m $ be the beads of $ G $. Let $ P_i $ and $ Q_i $ be the internal vertices of the two paths between $ a_i $ and $ b_i $ on the bead $ B_i $, $ 1 \leq i \leq m$. Each $ B_i $ is a hole in $ G $ and thus by Lemma~\ref{thm:holes-4-extrema}, it has four extrema. In particular, at least one of $ P_i $ and $ Q_i $, say $P_i $, has at least one extremum. Let $ C $ be the cycle in $ G $ obtained by removing the vertices of $ Q_i $,$ 1 \leq i \leq m $, and notice that $ C $ is a hole in $ G $. An extremum in $P_i $ for $ B_i $ is also an extremum for $ C $. But $C $ should have exactly four extrema, so $ m \leq 4 $, and hence $ m=4 $, and each $P_i $ has exactly one extremum, and the exrema on $P_i$'s are exactly the extrema of $ C $. Now, notice that three of the extrema of $ C $ should be consecutive vertices, which is not possible because no vertex of $P_i $ is a neighbor of a vertex of $P_j $ if $ i\neq j $. This is a contradiction. So, $G $ is not a Burling graph. 
	\end{proof}
	
	We can summarize all the lemmas above in the following theorem. 
	
	\begin{theorem} \label{thm:main-necklace-thm}
		Let $ G  $ be an $ m $-necklace.
		\begin{enumerate}[label=(\roman*)]
			\item If $ m=2 $, then $ G $ is a Burling graph if and only if either it has a star cutset or the two beads of $ G $ have a common neighbor. 
			\item If $ m=3 $, then $ G $ is a Burling graph if and only if there exists a short bead such that the two other beads have a common vertex.
			\item If $ m\geq 4 $, then $ G $ is not a Burling graph.  
		\end{enumerate}
		See Table~\ref{tab:necklaces}.
	\end{theorem}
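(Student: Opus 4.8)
The plan is simply to assemble the three lemmas already proved earlier in this section, translating freely between ``derived graph'' and ``Burling graph'' by means of Theorem~\ref{thm:BG=derived}. This theorem introduces no new argument of its own: each of its three items is a restatement of one of the preceding lemmas, so the work amounts to matching items to lemmas and checking that the phrasings agree.

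First I would dispatch part~(iii), the case $m\geq 4$. This is verbatim Lemma~\ref{lem:4-necklaces}, which asserts that no $m$-necklace with $m\geq 4$ is a derived graph, so nothing further is required once Theorem~\ref{thm:BG=derived} is invoked to pass from ``derived'' to ``Burling''.

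Next, for part~(ii), the case $m=3$: the claimed equivalence---that $G$ is Burling if and only if it has at least one short bead while the two remaining beads have a vertex in common---is exactly the content of Lemma~\ref{lem:3-necklace-with-short-bead}, so here too I would merely quote the lemma. Finally, for part~(i), the case $m=2$, I would invoke Lemma~\ref{lem:2-necklace-BG-iff-beads-common-vertex}, which states that a 2-necklace is a derived graph precisely when it has a star cutset or its two beads share a vertex.

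The only point requiring any care---and hence the sole ``obstacle''---is a matter of bookkeeping in part~(i): one must reconcile the phrase ``common neighbor'' appearing in the theorem statement with the phrase ``common vertex'' appearing in the lemma. For a 2-necklace these describe the same configuration, since the shared vertex is exactly the pivot of the resulting chandelier, and once this identification is made the item follows. After this reconciliation, the theorem is immediate from Lemmas~\ref{lem:2-necklace-BG-iff-beads-common-vertex}, \ref{lem:3-necklace-with-short-bead}, and~\ref{lem:4-necklaces} together with Theorem~\ref{thm:BG=derived}.
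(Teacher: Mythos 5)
Your proposal follows exactly the paper's proof: the paper, too, simply cites Lemmas~\ref{lem:2-necklace-BG-iff-beads-common-vertex}, \ref{lem:3-necklace-with-short-bead}, and~\ref{lem:4-necklaces} together with Theorem~\ref{thm:BG=derived}, with no further argument. The one point where you go beyond the paper is your ``reconciliation'' in part~(i), and there your justification is actually false: for a 2-necklace, the two beads having a \emph{common neighbor} is not the same configuration as their having a \emph{common vertex}. If the connecting path $b_1 \dots a_2$ has length exactly~2, its middle vertex is a common neighbor of the two beads although they share no vertex; taking both beads non-short (so that, by Lemma~\ref{lem:starcutset-iff-short-bead}, there is no star cutset), Lemma~\ref{lem:2-necklace-BG-iff-beads-common-vertex} says such a necklace is \emph{not} a derived graph, so under the ``common neighbor'' reading part~(i) would be false. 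The correct resolution is that ``common neighbor'' in the theorem statement is a typo for ``common vertex'' (consistent with the lemma and with the corollary that follows, which speaks of beads that ``share a vertex''); your instinct to defer to the lemma's phrasing is right, but it should be flagged as an erratum rather than argued as an equivalence.
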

	\begin{proof}
		Follows directly from Lemmas~\ref{lem:2-necklace-BG-iff-beads-common-vertex},~\ref{lem:2-necklace-with-short-bead},~\ref{lem:3-necklace-with-short-bead}, and~\ref{lem:4-necklaces}, and the fact that Burling graphs are equal to derived graphs (Theorem~\ref{thm:BG=derived}).
	\end{proof}
	
	\begin{corollary}
		Let $ G $ be an $ m $-necklace graph. If any of the followings happens, then $ G  $ is not a weakly pervasive graph:
		\begin{enumerate}[label=(\roman*)]
			\item $ m=2 $, $ G $ has no star cutset, and the beads do not share a vertex,
			\item $ m =3 $ and $ G $ has no star cutset, 
			\item $ m =3 $ and for every short bead $ B $ of $ G $, the two other beads have no common vertex. 
			\item $ m \geq 4 $.
		\end{enumerate}
	\end{corollary}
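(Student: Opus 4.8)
The plan is to apply Lemma~\ref{theorem:the-method}: it suffices to show that in each of the four situations, \emph{no} subdivision of the given $m$-necklace $G$ is a Burling graph. The whole argument rests on two structural observations about subdivisions of necklaces, which I would establish first. The first is that any subdivision $G'$ of an $m$-necklace $G$ is again an $m$-necklace with the same number of beads: subdividing edges turns each bead (a cycle) into a longer cycle and each connecting path into a longer path, and it identifies no vertices, so the bead-and-path structure survives with $m$ unchanged. The second, and more delicate, observation is a monotonicity statement: subdivision can neither create a short bead nor create a shared vertex between two beads. Concretely, a bead $B_i$ is short exactly when one of the two $a_i$--$b_i$ arcs of its cycle has length~$2$; since subdivision only lengthens arcs, a bead of $G'$ can be short only if the corresponding bead of $G$ already was, so the short beads of $G'$ form a subset of those of $G$. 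Equivalently, by Lemma~\ref{lem:starcutset-iff-short-bead}, if $G$ has no star cutset then neither does $G'$. Likewise, two beads share a vertex only when the connecting path between them has length~$0$; such a degenerate connection has no edge to subdivide, while a connecting path of length $\geq 1$ stays of length $\geq 1$, so if two beads of $G$ do not share a vertex then the corresponding beads of $G'$ do not share a vertex either.

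With these facts in hand, each case reduces to an application of Theorem~\ref{thm:main-necklace-thm} to an arbitrary subdivision $G'$. For~(iv), every subdivision $G'$ is an $m$-necklace with $m \geq 4$, so by part~(iii) of the theorem it is never Burling. For~(ii), $G$ has no star cutset, hence no short bead; by the first monotonicity fact $G'$ has no short bead, and by part~(ii) of the theorem a Burling $3$-necklace must have a short bead, so $G'$ is not Burling. For~(i), $G$ has no star cutset and its two beads do not share a vertex; by the two monotonicity facts $G'$ inherits both properties, while by part~(i) of the theorem a Burling $2$-necklace must have a star cutset or its beads must have a common vertex, so $G'$ is not Burling. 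In each case $G$ is therefore non-weakly pervasive by Lemma~\ref{theorem:the-method}.

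Case~(iii) requires one extra translation step, which I would carry out at the outset. The stated hypothesis---for every short bead $B$ and every vertex $v$ there is a bead other than $B$ avoiding $v$---is precisely the negation of ``some short bead $B$ has both of the other two beads passing through a common vertex $v$.'' Since $m=3$, this is equivalent to the statement that for every short bead $B$ of $G$, the two remaining beads do not share a vertex. Now suppose for contradiction that some subdivision $G'$ is Burling; by part~(ii) of Theorem~\ref{thm:main-necklace-thm}, $G'$ has a short bead $B'$ and its two other beads share a vertex. By the first monotonicity fact $B'$ comes from a short bead $B$ of $G$, so the reformulated hypothesis tells us the two other beads of $G$ do not share a vertex; by the second monotonicity fact the two other beads of $G'$ then do not share a vertex either, contradicting the choice of $G'$. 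Hence no subdivision of $G$ is Burling, and~(iii) follows from Lemma~\ref{theorem:the-method}.

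The main obstacle is the second monotonicity observation, and in particular getting the definitional edge cases right: one must verify that subdivision cannot turn a long bead into a short one (it cannot, since shortness requires a length-$2$ arc and subdivision only lengthens arcs), and that it can neither un-identify a shared vertex nor create a new one (it cannot, since a length-$0$ connection has no edge to subdivide and subdivision never merges vertices). Once this monotonicity is pinned down, and once hypothesis~(iii) is rephrased as the assertion that the two non-short beads share no vertex, the four cases follow immediately from Theorem~\ref{thm:main-necklace-thm} together with Lemma~\ref{theorem:the-method}.
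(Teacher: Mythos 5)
Your proposal is correct and follows essentially the same route as the paper: the paper's proof simply asserts that any subdivision of $G$ remains in one of the four listed forms and then invokes Theorem~\ref{thm:main-necklace-thm} together with Lemma~\ref{theorem:the-method} (and Lemma~\ref{lem:starcutset-iff-short-bead} to pass between star cutsets and short beads), which is exactly what your monotonicity observations make explicit. Your write-up just supplies the details (subdivision preserves the necklace structure, cannot create short beads or shared vertices) that the paper leaves as a one-line ``notice that.''
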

	\begin{proof}
		Let $ G $ be in one of the above four forms. Notice that a subdivision of $ G $ will also be in one of the above forms. So, by Theorem~\ref{thm:main-necklace-thm}, neither $ G $ nor any of its subdivisions ae Burling graph. Notice that we have used the fact that having a short bead and having a star cutset are equivalent in necklaces (Lemma~\ref{lem:starcutset-iff-short-bead}). So, $ G $ is not a weakly pervasive graph.
	\end{proof}
	
	\begin{remark}
		\begin{figure}
			\centering
			\vspace*{-2cm}
			\includegraphics[width=8cm]{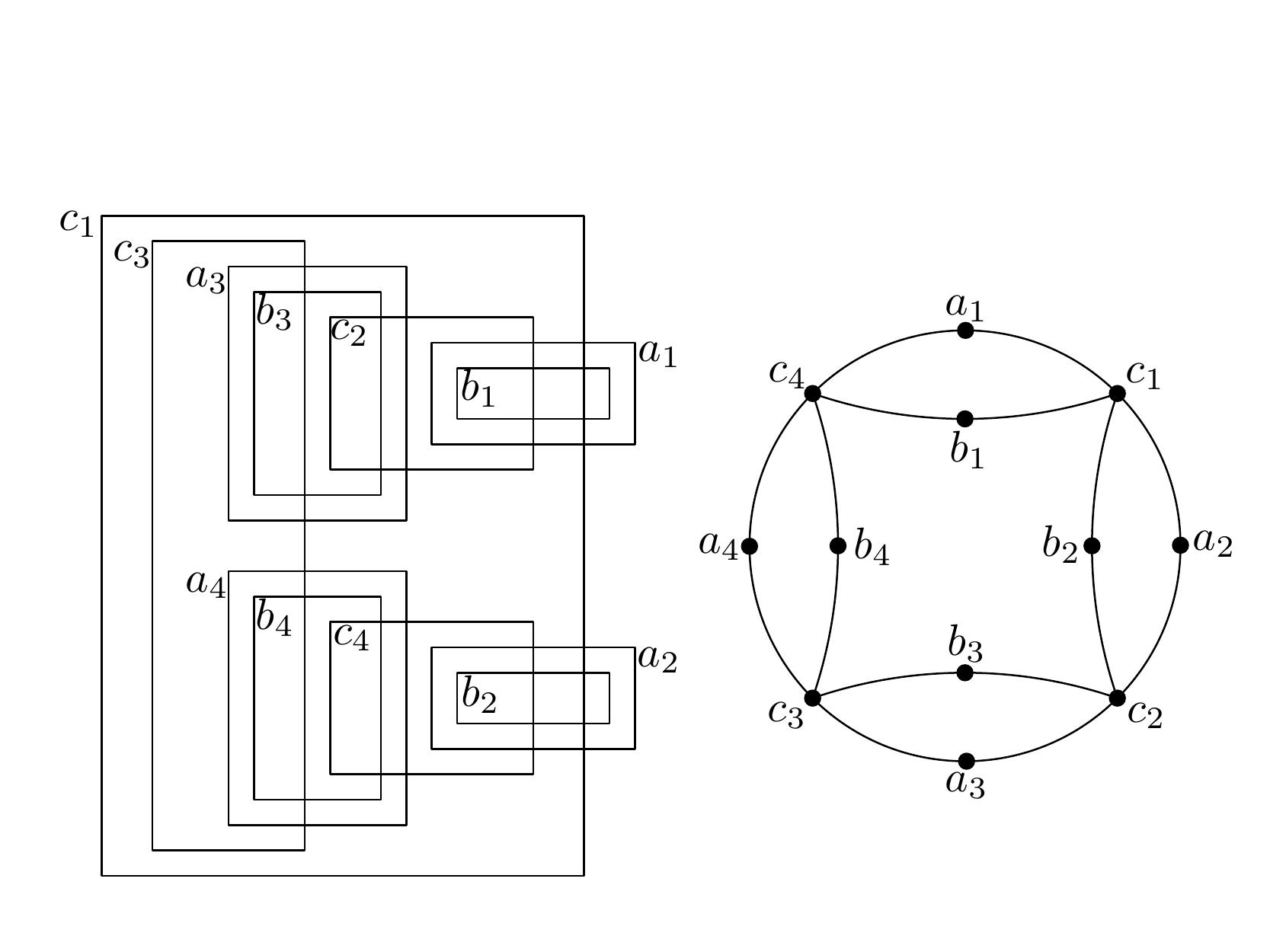}
			\caption{A $ 4$-necklace, presented as a frame graph} \label{fig:4-necklace-as-frame-graph}
		\end{figure}
		Let $ G $ be a necklace. If $ G $ does not have a star cutset, then $ G $ is not even a restricted frame graph. This follows from Theorem~3.3 of~\cite{Chalopin2014}. But if $ G $ has a star cutset, then it might be a restricted frame graph. See Figure~\ref{fig:4-necklace-as-frame-graph} for an example of a non-Burling necklace which is a restricted frame graph.
	\end{remark}
	
	\section{Dumbbells} \label{section:dumbbells}

	Let $ G $ be a non-oriented derived graph. We call a vertex $ v $ a \emph{global subordinate} vertex of $ G $ if for any oriented derived graph $\tilde G $ for which $ G $ is the underlying graph, $ v $ is a subordinate vertex of some hole in $ \tilde G $.

	Given two graphs $ G_1 $ and $G_2 $ and two vertices $ x_1 \in V(G_1) $ and $x_2 \in V(G_2) $, one can build a graph $ D $ as follows: first take the disjoint union of $G_1 $ and $G_2 $ and then connect $x_1 $ and $x_2 $ by a path of length at least~0, to obtain $ D$. Any graph $ D $ built as above is called a \emph{dumbbell} of $ G_1 $ and $G_2 $ with respect to $x_1 $ and $x_2 $. See Figure~\ref{fig:dumbbells} for some examples.

	\begin{lemma} \label{lem:general-dumbbell}
	Let $ G_1 $ and $G_2 $ be two derived  graphs, and let $ x_1 \in V(G_1) $ and $x_2 \in V(G_2) $. If $ x_i $ is a global subordinate vertex of $G_i $, for $ i=1,2$, then any dumbbell $ D $ of $G_1 $ and $G_2 $ with respect to $x_1 $ and $x_2 $ is not a derived graph.  
	\end{lemma}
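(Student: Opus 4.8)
The plan is to argue by contradiction, reducing the general dumbbell $D$ to the restricted dumbbell of Lemma~\ref{thm:dumbbell} by passing to a well-chosen induced subgraph. Suppose $D$ is a derived graph, and fix an orientation making it an oriented derived graph $\tilde D$. Write $P = x_1 \dots x_2$ for the connecting path (of length at least~0).

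First I would restrict attention to each side separately. Since $G_1$ is an induced subgraph of $D$, the orientation of $\tilde D$ restricts to an orientation of $G_1$ under which $G_1$ is still an oriented derived graph, because the property of being an oriented derived graph is inherited by induced subgraphs (it is immediate from the definition: an induced subgraph of an induced subgraph of a fully derived graph is again such an induced subgraph). Now I invoke the hypothesis that $x_1$ is a \emph{global subordinate} vertex of $G_1$: applied to this particular oriented derived graph with underlying graph $G_1$, it yields a hole $H_1$ of $G_1$ in which $x_1$ is a subordinate vertex; in particular $x_1 \in V(H_1)$. Symmetrically I obtain a hole $H_2$ of $G_2$ in which $x_2$ is subordinate. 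Note that both holes, with their orientations, are determined by the single fixed orientation of $\tilde D$, so there is no compatibility issue between the two choices.

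Next I would look at the subgraph of $\tilde D$ induced on $V(H_1) \cup V(P) \cup V(H_2)$, and verify that its underlying graph is exactly a dumbbell in the sense of the definition preceding Lemma~\ref{thm:dumbbell}, with holes $H_1$, $H_2$ and connecting path $P$. Each $H_i$ is chordless, being a hole; the interior vertices of $P$ lie outside $G_1 \cup G_2$ and are adjacent only to their neighbours along $P$; and since $D$ joins $G_1$ and $G_2$ only through $P$, there are no edges between $V(H_1)\sm\{x_1\}$ and $V(H_2)\sm\{x_2\}$, nor between either hole and the interior of $P$. Hence the intersection conditions $V(H_1)\cap V(P)=\{x_1\}$, $V(H_2)\cap V(P)=\{x_2\}$ and $V(H_1)\cap V(H_2)=\{x_1\}\cap\{x_2\}$ hold and no spurious chords appear (the degenerate case of a length-$0$ path, where $x_1=x_2$, being covered by the ``$x=x'$'' clause of the definition). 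Being an induced subgraph of $\tilde D$, this subgraph is an oriented derived graph whose underlying graph is this dumbbell, so Lemma~\ref{thm:dumbbell} applies and forces that $x_1$ is not subordinate in $H_1$, or $x_2$ is not subordinate in $H_2$ — contradicting the choices made in the second step. Therefore $D$ is not a derived graph.

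I expect the only real obstacle to be the bookkeeping of the third step, namely confirming that the induced subgraph on $V(H_1)\cup V(P)\cup V(H_2)$ carries no edges beyond those of the two holes and the path, so that Lemma~\ref{thm:dumbbell} genuinely applies; once that is checked, the argument is a direct unwinding of the definition of a global subordinate vertex together with the inheritance of the oriented-derived property by induced subgraphs.
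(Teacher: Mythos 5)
Your proposal is correct and follows essentially the same route as the paper's proof: assume $D$ is derived, use the global-subordinate hypothesis (applied to the orientation inherited from $D$) to obtain holes $H_1$ and $H_2$ in which $x_1$ and $x_2$ are subordinate, observe that the subgraph of $D$ induced on $V(H_1)\cup V(P)\cup V(H_2)$ is a dumbbell and hence a derived graph, and contradict Lemma~\ref{thm:dumbbell}. The only difference is that you spell out the steps the paper leaves implicit --- the closure of oriented derived graphs under induced subgraphs and the verification that no spurious chords arise --- which is sound bookkeeping, not a departure in method.
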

	
	\begin{proof}
		Assume that $ D $ is a derived graph. By definition of global subordinate vertex, for $ i = 1,2 $, there is a hole $ H_i $ in $G_i $ for which $ x_i $ is a subordinate vertex. So, the dumbbell built by holes $ H_1 $ and $H_2$ and the path between $ x_1 $ and $x_2 $ is an induced subgraph of $ D $, and thus is a derived graph, a contradiction with Lemma~\ref{thm:dumbbell}.
	\end{proof}

	\begin{remark}
	Notice that if one of $ G_1 $ or $G_2 $ is not a derived graph, then the dumbbell of $G_1 $ and $G_2 $ with respect to any two vertices is not a derived graph, because Burling graphs are closed under taking induced subgraphs.
	\end{remark}

	In this section, Lemma~\ref{lem:general-dumbbell} is essentially what enables us to find a new family of graphs that are not weakly pervasive, all of which have vertex cuts. 
	
	\begin{figure}
		\centering
		\vspace*{-1cm}
		\includegraphics[width=8cm]{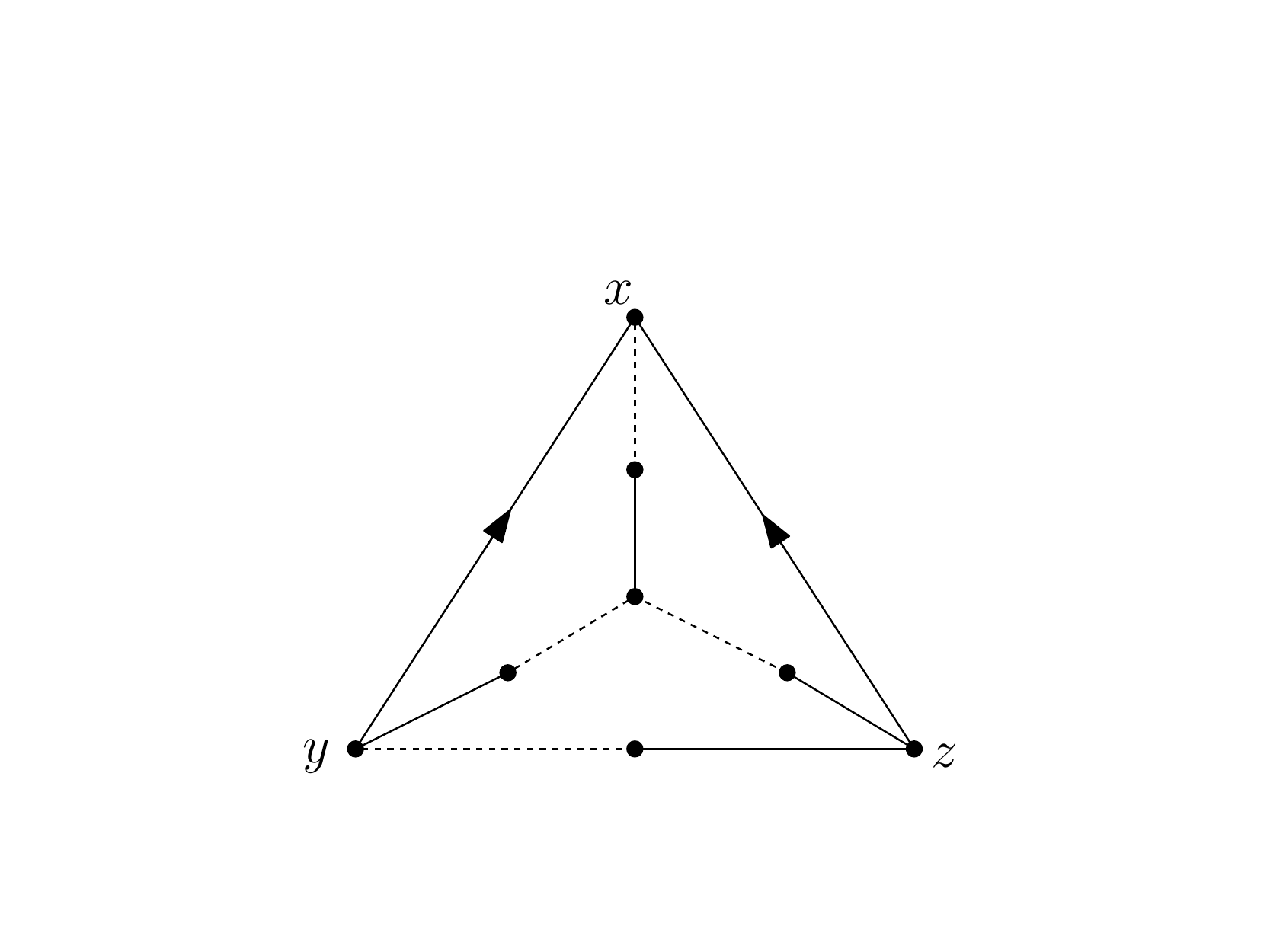}
		\vspace*{-1.25cm}
		\caption{The vertex $ x $ is the unique center of an in-star cutset in $ G $.} \label{Fig:unique-center}
	\end{figure}

	\begin{lemma} \label{lem:unique-in-star-in-K4-subd}
		Let $ G $ be an oriented derived graph whose underlying graph is a type 4 subdivision of $ K_4 $. If $ x $ is the common end-point of the two non-subdivided arcs, then $ x $ is the unique center of an in-star cutset in $ G $. In particular, the two non-subdivided arcs are oriented toward $ x $. See Figure~\ref{Fig:unique-center}.
	\end{lemma}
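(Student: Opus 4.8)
The plan is to first determine the orientation of the two non-subdivided arcs, and then read off both the cutset and its uniqueness from this. Fix a Burling tree $T$ from which the given orientation of $G$ is derived, and write $a,b,c,d$ for the four vertices of degree $3$, chosen by Lemma~\ref{thm:subdivisions-of-K4} so that $ab,ac$ are the two non-subdivided edges and $ad,bc,bd,cd$ the four subdivided ones; thus $x=a$. Each triangle and each $4$-cycle of $K_4$ becomes a hole of $G$; in particular the triangles $abc$, $abd$, $acd$ give holes $H_{abc}$, $H_{abd}$, $H_{acd}$. Recall from Lemma~\ref{thm:holes-4-extrema} and the description of the special vertices of a hole that the two sources of a hole are exactly its two antennas and the two sinks are its pivot and its bottom; hence a vertex is a sink of a hole if and only if both incident edges of that hole point towards it. Consequently the ``in particular'' clause $b\to a$, $c\to a$ is equivalent to the single statement that $a$ is a sink of $H_{abc}$.

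Proving that $a$ is a sink of $H_{abc}$ is the heart of the argument and the step I expect to be hardest; once it is known the rest is bookkeeping. I would rule out the two alternatives, that $a$ is a source or a transitive vertex of $H_{abc}$, using the two dominoes available at $a$: the holes $H_{abc},H_{abd}$ meet in exactly the edge $ab$, and $H_{abc},H_{acd}$ meet in exactly the edge $ac$. By Lemma~\ref{thm:domino}, each of these dominoes forces one of its two shared endpoints to be simultaneously the pivot of one of its holes and a subordinate vertex of the other. I would feed the possible outcomes into two simple facts. First, if $a$ is a source of $H_{abc}$ then, being then an antenna, the pivot of $H_{abc}$ must be one of its two neighbours $b$, $c$; so the outcomes in which both dominoes declare a pivot inside $H_{abc}$ would force two distinct pivots of $H_{abc}$, which is impossible. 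Second, a vertex declared to be the pivot of a hole is by definition a $T$-ancestor of every vertex of that hole except its two antennas. The remaining outcomes then produce either a direct clash between a forced orientation of one of the arcs $ab,ac$ and one already fixed, or a vertex forced to be both an ancestor and a descendant of another (for instance $b$ an ancestor of $c$ coming from $H_{abc}$ against $c$ an ancestor of $b$ coming from the $4$-cycle through $a,c,b,d$), contradicting the antisymmetry of the ancestor order. The orientation of the third edge at $a$, towards the subdivided edge $ad$, splits the analysis into two subcases, but in each the same two facts close it out.

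Granting $b\to a$ and $c\to a$, I would finish by a direct inspection of neighbourhoods. Since $b,c\in N^-(a)$, the set $N^-[a]$ contains $\{a,b,c\}$, and already removing $\{a,b,c\}$ separates the interior of the subdivided edge $bc$ (whose vertices touch the rest of $G$ only through $b$ and $c$) from the part containing $d$; removing the possibly larger $N^-[a]$ preserves this separation, so $a$ is the centre of a full in-star cutset. For uniqueness I would show that no other vertex $v$ is such a centre. If $v$ has degree $2$, then $N^-[v]$ is contained in a set of at most three consecutive vertices of a single subdivision path and hence contains at most two of the branch vertices $a,b,c,d$; in every such case the remaining branch vertices stay pairwise joined in the subdivision and each subdivision-path interior is attached to a surviving branch vertex, so $G\setminus N^-[v]$ is connected. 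If $v\in\{b,c\}$ then $b\to a$ (resp.\ $c\to a$) gives $a\in N^+(v)$, so $a\notin N^-[v]$; the surviving vertex $a$ keeps $a,c,d$ (resp.\ $a,b,d$) mutually connected. Finally if $v=d$ then $N^-[d]$ meets only the three paths incident to $d$, and after its removal $a,b,c$ are still pairwise joined by $ab$, $ac$ and the remaining interior of $bc$. In each case $G\setminus N^-[v]$ is connected, so $a$ is the unique centre, completing the proof.
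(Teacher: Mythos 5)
Your overall strategy can be made to work, but as written it has a genuine gap exactly where you yourself locate the heart of the argument. The claim that $a$ is a sink of $H_{abc}$ is never actually proved: you name the tools (the two dominoes on $ab$ and $ac$, uniqueness of the pivot of a hole, antisymmetry of the ancestor relation) and assert that every surviving outcome of Lemma~\ref{thm:domino} dies by ``a direct clash'' or an ancestor contradiction, but the case analysis itself --- each domino has four possible outcomes, so roughly sixteen combinations --- is not carried out, and the hard cases are not closable by an arc clash alone. For example, the combination ``$b$ is the pivot of $H_{abc}$ (hence $a\to b$, $a\to c$) and $c$ is the pivot of $H_{acd}$ (hence $a\to p$, where $p$ is the neighbour of $a$ on the $a$--$d$ path, and $c$ is an ancestor of $p$)'' produces no orientation conflict at all; to kill it one must pass to the third hole $H_{abd}$, note that $a$ is one of its antennas and that $b$ is subordinate in it (by the domino outcome), deduce that its pivot is forced to be $p$, and then chain $c \desc p$, $p \desc b$, $b \desc c$ against antisymmetry. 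That chain is precisely the missing content. There is also a factual error in your setup: it is not true that each $4$-cycle of $K_4$ becomes a hole of $G$. The $4$-cycles $abcd$ and $acbd$ have chords $ac$ and $ab$ respectively; only $a$-$b$-$d$-$c$-$a$ is a hole, and the ``$4$-cycle through $a,c,b,d$'' that your sketch invokes reads as a chorded cycle, to which the hole lemmas do not apply.

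The gap is the more regrettable because the lemma has a much shorter proof, which is the paper's, and which your own last paragraph almost contains. Existence is immediate from Theorem~\ref{thm:starcutset-chandelier-degree1}: a type 4 subdivision of $K_4$ has minimum degree $2$ and is not a chandelier (every vertex is avoided by some cycle), so it has a full in-star cutset. Uniqueness follows because the center of a full in-star cutset of $G$ is in particular the center of a star cutset of the underlying non-oriented graph, and connectivity checks of exactly the kind you do --- but run orientation-free, i.e.\ removing any subset of $N[v]$ rather than $N^-[v]$, which works just as well here --- show that no vertex other than $x$ can be such a center. The orientation of the two non-subdivided arcs then falls out last, by your own correct observation that $N^-[x]$ can only disconnect the graph if both $b$ and $c$ lie in $N^-(x)$. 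In short: your final two paragraphs, run in the reverse order, are essentially the whole proof, and the domino analysis you place first (and leave unexecuted) is not needed for this lemma; in the paper that style of argument is deferred to Lemma~\ref{lem:K-4-subordinate}, where it is derived \emph{from} the present lemma rather than used to prove it.
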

	
	\begin{proof}		
		First, $ G $ has an in-star cutset due to Theorem~\ref{thm:starcutset-chandelier-degree1} because it is not a chandelier and has no vertex of degree~1. 
		
		Moreover, if a vertex $ v $ is the center of an in-star cutset in $ G $, then $ v$ is the center of a star cutset in the underlying graph of $ G $. Since any vertex other than $x $ cannot be the center of a star cutset in the underlying graph of $ G $, $ x $ is the unique center of an in-star cutset in $ G $.
	\end{proof}

	\begin{lemma} \label{lem:K-4-pre-subordinate} \label{lem:K-4-subordinate}
		Let $ G $ be a type 4 subdivision of $ K_4 $. Let $ x  $ be the common end-point of the two non-subdivided edges of $ G $, and let $ y $ and $ z $ be its degree 3 neighbors. For any subdivision $ G^* $ of $ G $ which is a derived graph,
		$ yx, zx \in E(G^*) $, 
		and for any $ w \in \{y,z\} $, $ x $ is the pivot of exactly one of the two holes going through $ xw $, and is a subordinate vertex of the other. In particular, $ x $ is a global subordinate vertex of $ G^* $.
	\end{lemma}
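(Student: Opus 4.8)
The plan is to reduce $G^*$ to a type $4$ subdivision of $K_4$, read off the orientation at $x$ from Lemma~\ref{lem:unique-in-star-in-K4-subd}, and then apply the domino lemma (Lemma~\ref{thm:domino}) to the two triangular holes through each of $xy$ and $xz$.

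First I would establish that $xy, xz \in E(G^*)$. Since $G^*$ is a subdivision of $G$ and $G$ is a subdivision of $K_4$, the graph $G^*$ is again a subdivision of $K_4$; its four vertices of degree $3$ are $x$, $y$, $z$ and the remaining corner $t$, and these are the only candidates for the four degree-$3$ vertices required by Lemma~\ref{thm:subdivisions-of-K4}. Now subdividing edges can only turn direct edges into paths, never the reverse, so the set of non-subdivided edges of $G^*$ is contained in $\{xy, xz\}$, the non-subdivided edges of $G$. For $G^*$ to satisfy the criterion of Lemma~\ref{thm:subdivisions-of-K4} it must contain two non-subdivided edges sharing an endpoint, with the opposite pair subdivided; the only way to realise this with non-subdivided edges drawn from $\{xy,xz\}$ is to keep both $xy$ and $xz$, which meet at $x$. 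Hence $xy, xz \in E(G^*)$, and since the four edges $xt, yz, yt, zt$ remain subdivided, $G^*$ is itself a type $4$ subdivision of $K_4$ with distinguished vertex $x$.

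Next I would fix an orientation turning $G^*$ into an oriented derived graph and apply Lemma~\ref{lem:unique-in-star-in-K4-subd} to it: the two non-subdivided arcs point towards $x$, that is, $y \to x$ and $z \to x$. Then, for $w \in \{y,z\}$ (write $w'$ for the other of $y,z$), I would single out the two triangular holes through the edge $xw$: the hole $H$ on the subdivided triangle $xww'$ and the hole $H'$ on the subdivided triangle $xwt$. A direct check shows $V(H)\cap V(H') = \{x,w\}$ and that no edge of $G^*$ joins $V(H)\setminus\{x,w\}$ to $V(H')\setminus\{x,w\}$ (the relevant pairs of corners, such as $w'$ and $t$, are non-adjacent because the edge $w't$ is subdivided, and the internal vertices of the omitted path lie in neither hole). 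Thus $H$ and $H'$ form a domino on the edge $xw$.

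Finally I would invoke Lemma~\ref{thm:domino} for this domino: one of $x, w$ is the pivot of one of $H, H'$ and a subordinate vertex of the other. The decisive observation is that the pivot of any hole is a sink, since by Lemma~\ref{thm:holes-4-extrema} it is the common neighbour of the two sources and this common neighbour is a sink. But $w \to x$, so $w$ has the out-neighbour $x$ in both $H$ and $H'$ (the edge $xw$ lies in both holes), hence $w$ is a sink in neither and cannot be a pivot. Therefore $x$ is the pivot of exactly one of the two holes through $xw$ and a subordinate vertex of the other; the symmetric argument using $z \to x$ covers $w = z$. In particular $x$ is a subordinate vertex of some hole under every orientation that makes $G^*$ derived, so $x$ is a global subordinate vertex of $G^*$. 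I expect the only delicate point to be the bookkeeping of the third paragraph---verifying that the two triangular holes through $xw$ genuinely induce a domino with no hidden chord or connecting edge, so that Lemma~\ref{thm:domino} applies; granting that, the orientation supplied by Lemma~\ref{lem:unique-in-star-in-K4-subd} makes the pivot/subordinate split immediate.
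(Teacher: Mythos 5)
Your proof is correct and takes essentially the same route as the paper's: force $xy, xz \in E(G^*)$ via Lemma~\ref{thm:subdivisions-of-K4}, orient the two non-subdivided arcs toward $x$ by Lemma~\ref{lem:unique-in-star-in-K4-subd}, then apply the domino lemma (Lemma~\ref{thm:domino}) to the two triangular holes through $xw$ and rule out $w$ as a pivot because a pivot is a sink while $w$ has out-neighbor $x$. The only difference is that you spell out what the paper leaves implicit, namely that $G^*$ is again a type~4 subdivision of $K_4$ and that the two triangular holes genuinely form a domino.
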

	\begin{proof}
          If either of $ xy $ or $ xz $ are not edges of~$ G^* $, then
          by Lemma~\ref{thm:subdivisions-of-K4}, $ G^* $ is not a
          derived graph and there is nothing to prove. So, suppose that
          $ yx, zx \in E(G^*) $, and hence $ G^* $ is a derived graph,
          meaning that it is the underlying graph of an oriented
          derived graph. Consider this orientation on~$ G^* $. By
          Lemma~\ref{lem:unique-in-star-in-K4-subd}, $ x $ is the only
          center of an in-star cutset in~$ G^*$.
		So, $ x $ is not the antenna of any hole in $ G^* $. Consider the two holes passing through the arc $ xw $ in $ G^*$ and call them $ H_1 $ and $H_2 $. They form a domino, so by Lemma~\ref{thm:domino}, for some $ u \in \{x,w\} $ and for some $ i \in \{1,2\} $, 
		$u$ is the pivot
		of $ H_i$ and $u $ is a subordinate vertex of $
                H_{3-i} $. Because the arc $ wx $ is oriented from $ w
                $ to $ x $, then $ w $ cannot be the pivot of any of
                the two holes. Thus in $G^*$, the vertex $ x $ is the
                pivot of one of the two holes, and the subordinate
                vertex of another.
\end{proof}
%

\begin{lemma} \label{lem:theta-subordinate} Let $ G $ be a long theta
  with apexes $ u $ and $v $. If $ x $ is a vertex of degree 2 in
  $ G $ such that its two neighbors are also of degree 2, then $ x $
  is a global subordinate vertex in any subdivision of $ G $.
\end{lemma}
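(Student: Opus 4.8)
The plan is to reduce everything to Lemma~\ref{lem:theta-pivot} together with the elementary observation that a subdivision of a long theta is again a long theta. First I would fix an arbitrary subdivision $G^*$ of $G$ and note that $G^*$ is itself a long theta with the same apexes $u$ and $v$: subdividing edges only lengthens the three internally disjoint $u$--$v$ paths, so each still has length at least~$3$, and $u,v$ remain the only vertices of degree~$3$. Moreover, the hypothesis on $x$ survives subdivision: in $G^*$ the vertex $x$ still has degree~$2$, and its two neighbours are still degree-$2$ vertices (each neighbour of $x$ in $G^*$ is either an original neighbour, which had degree~$2$ in $G$ and hence in $G^*$, or a newly created subdivision vertex, which has degree~$2$). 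In particular $x$ is adjacent to neither $u$ nor $v$ in $G^*$, since the apexes are the only vertices of degree~$3$.

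To show that $x$ is a global subordinate vertex of $G^*$, I would take an arbitrary orientation of $G^*$ turning it into an oriented derived graph (if no such orientation exists the claim holds vacuously) and exhibit a single hole of which $x$ is a subordinate vertex. A long theta has exactly three holes, obtained from the three pairs of its defining paths; the vertex $x$ lies in the interior of exactly one path $P$, and hence belongs to exactly the two holes that use $P$. Pick any such hole $H$; it passes through both apexes. By Lemma~\ref{lem:theta-pivot}, exactly one of $u,v$, say $u$, is the pivot of every hole of $G^*$, so $u$ is the pivot of $H$.

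It then remains to check that $x$ is neither the pivot nor an antenna of $H$. It is not the pivot, since the pivot is $u\neq x$. For the antennas, I would invoke the structural description of holes recalled just before Lemma~\ref{thm:holes-4-extrema}: the pivot is a common neighbour of the two antennas, so every antenna of $H$ is adjacent to $u$. Since $x$ is not adjacent to $u$ in $G^*$, it is not an antenna of $H$. Therefore $x$ is a subordinate vertex of $H$. As the orientation was arbitrary, $x$ is a global subordinate vertex of $G^*$, and as $G^*$ was an arbitrary subdivision of $G$, the lemma follows.

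I expect the only genuine (if minor) obstacle to be the bookkeeping in the first paragraph — verifying that the defining features of a long theta and the degree conditions on $x$ are preserved under subdivision — since the remainder is a direct application of Lemma~\ref{lem:theta-pivot} and the fact that antennas are neighbours of the pivot. The one subtlety worth stating carefully is that $x$ being non-adjacent to the apexes is precisely what rules it out as an antenna, so the hypothesis that both neighbours of $x$ have degree~$2$ is used exactly at this point.
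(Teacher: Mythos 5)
Your proof is correct and follows essentially the same route as the paper's: both reduce the claim to Lemma~\ref{lem:theta-pivot} and the observation that the antennas of every hole are neighbours of the pivot, so a vertex at distance at least~2 from both apexes must be subordinate in any hole containing it. The only difference is that you spell out the bookkeeping (subdivisions of long thetas are long thetas, the degree hypotheses on $x$ persist, and a hole containing $x$ exists), which the paper's proof leaves implicit by arguing directly about $G$.
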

              
\begin{proof}
  By Lemma~\ref{lem:theta-pivot}, for any derived graph $\tilde G $
  for which $ G $ is an underlying graph, there is $ w \in \{u,v\} $
  such that $ w $ is the pivot of all holes of $ \tilde G $, and thus
  the antennas of all holes are among the neighbors of $ w $. By
  assumptions, $ x \neq w $ and is not one of its neighbors. Thus,
  $ x $ is a global subordinate vertex of~$ G $.
\end{proof}
	
\begin{theorem} \label{thm:non-Burling-dumbbells} Let $ G_1 $ be a
  type 4 subdivision of $K_4 $, and let $ x_1 $ be the common
  end-point of its two non-subdivided edges. Let $ G_2 $ be a long
  theta, and let $ x_2 $ be a vertex of degree 2 in $ G_2 $ whose
  neighbors are also of degree 2. Choose $i $ and $ j $ in $\{1,2\}$
  ($i $ and $ j $ can be possibly equal). If $D $ is a dumbbell of
  $ G_i $ and $G_j $ with respect to $ x_i$ and $ x_j $, then no
  subdivision of $ D $ is a Burling graph. In particular, $ D $ is
  not weakly pervasive. See Figure~\ref{fig:dumbbells}.
	\end{theorem}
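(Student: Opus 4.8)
The plan is to package the three ``global subordinate'' lemmas with the general dumbbell obstruction of Lemma~\ref{lem:general-dumbbell}, and then to conclude via the method of Lemma~\ref{theorem:the-method}. Since Lemma~\ref{theorem:the-method} reduces the claim ``$D$ is non-weakly pervasive'' to ``no subdivision of $D$ is a Burling graph'', the whole proof amounts to showing the latter. The essential structural observation is that \emph{a subdivision of a dumbbell is again a dumbbell}: subdividing edges only inserts degree-2 vertices and never deletes the original vertices, so any subdivision $D^*$ of $D$ is a dumbbell of two graphs $G_i^*$ and $G_j^*$ with respect to $x_i$ and $x_j$, where $G_i^*$ (resp.\ $G_j^*$) is the graph spanned in $D^*$ by $V(G_i)$ together with the new vertices inserted on edges of $G_i$ (resp.\ $G_j$), the connecting path being the subdivision of the original path. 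By construction $G_i^*$ is a subdivision of $G_i$, and it is an induced subgraph of $D^*$: the only adjacency of $x_i$ in $D^*$ not already present in $G_i^*$ runs to the connecting path, whose internal vertices lie outside $V(G_i^*)$.

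Fix an arbitrary subdivision $D^*$ of $D$ and argue by contradiction, assuming $D^*$ is a Burling graph, equivalently (Theorem~\ref{thm:BG=derived}) a derived graph. Since derived graphs are closed under induced subgraphs, both $G_i^*$ and $G_j^*$ are derived graphs. I then claim that $x_i$ is a global subordinate vertex of $G_i^*$ (and symmetrically for $x_j$ in $G_j^*$). There are two cases according to which of the two prescribed graphs occupies that end of the dumbbell. If the end is the type 4 subdivision of $K_4$, then $G_i^*$ is a subdivision of it, and since $G_i^*$ is a derived graph, Lemma~\ref{lem:K-4-subordinate} applies and yields that $x_i$, the common end-point of the two non-subdivided edges, is a global subordinate vertex of $G_i^*$. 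If the end is the long theta, then $G_i^*$ is a subdivision of that theta, and Lemma~\ref{lem:theta-subordinate} applies directly (its conclusion is stated for every subdivision of the theta) to give that $x_i$ is a global subordinate vertex of $G_i^*$.

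With both $x_i$ and $x_j$ now known to be global subordinate vertices of the derived graphs $G_i^*$ and $G_j^*$, Lemma~\ref{lem:general-dumbbell} shows that the dumbbell $D^*$ is not a derived graph, contradicting our assumption. Hence no subdivision of $D$ is a Burling graph, and Lemma~\ref{theorem:the-method} gives that $D$ is non-weakly pervasive. The one place needing care is the reduction in the first paragraph: one must check that $G_i^*$ is genuinely an induced subgraph of $D^*$ and a subdivision of $G_i$ in every case, in particular in the degenerate case where the connecting path has length $0$ and $x_i$ is identified with $x_j$; there $V(G_i^*) \cap V(G_j^*) = \{x_i\}$, and restricting $D^*$ to $V(G_i^*)$ still recovers exactly $G_i^*$. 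Beyond this bookkeeping the argument is a direct assembly of the cited lemmas, which already absorb the genuinely hard content (that the relevant vertices stay subordinate across all subdivisions and all admissible orientations).
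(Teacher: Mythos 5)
Your proof is correct and takes essentially the same route as the paper's: the paper also decomposes a subdivision of $D$ into subdivisions of $G_i$ and $G_j$ joined by a path, handles the case where one of these pieces is non-Burling by closure under induced subgraphs, and otherwise combines Lemma~\ref{lem:K-4-subordinate} and Lemma~\ref{lem:theta-subordinate} with Lemma~\ref{lem:general-dumbbell}, concluding via Lemma~\ref{theorem:the-method}. Your version merely makes explicit the bookkeeping (that a subdivision of a dumbbell is a dumbbell of subdivisions, each an induced subgraph) that the paper leaves implicit.
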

	\begin{proof}
	If a subdivision of $ G_i $ is not a Burling graph, then neither is the dumbbell containing it. Otherwise, the result follows from Lemma~\ref{lem:K-4-subordinate} and Lemma~\ref{lem:theta-subordinate} by applying Lemma~\ref{lem:general-dumbbell}.
	
	The second part follows from Lemma~\ref{theorem:the-method}.
	\end{proof}	


	\begin{figure}
		\vspace*{-.5cm}
				\begin{center}
			\begin{tikzpicture}[scale=.4] 
			\filldraw[black] (9,0) circle (5pt);
			\filldraw[black] (9,4) circle (5pt);
			\filldraw[black] (9,-4) circle (5pt);
			\filldraw[black] (8,2) circle (5pt);
			\filldraw[black] (8,-2) circle (5pt);
			\filldraw[black] (7,0) circle (5pt);
			\filldraw[black] (5,0) circle (5pt);
			\filldraw[black] (3,0) circle (5pt);
			\draw[black] (3,0) -- (9,4) -- (9,-4) -- (3,0);
			\draw[black] (3,0) -- (7,0) -- (9,4);
			\draw[black] (9,-4) -- (7,0);
			\filldraw[black] (-9,0) circle (5pt);
			\filldraw[black] (-9,4) circle (5pt);
			\filldraw[black] (-9,-4) circle (5pt);
			\filldraw[black] (-8,2) circle (5pt);
			\filldraw[black] (-8,-2) circle (5pt);
			\filldraw[black] (-7,0) circle (5pt);
			\filldraw[black] (-5,0) circle (5pt);
			\filldraw[black] (-3,0) circle (5pt);
			\draw[black] (-3,0) -- (-9,4) -- (-9,-4) -- (-3,0);
			\draw[black] (-3,0) -- (-7,0) -- (-9,4);
			\draw[black] (-9,-4) -- (-7,0);
			\draw[dotted] (-3,0) -- (3,0);
			\end{tikzpicture}\\
			\vspace*{1cm}
			\begin{tikzpicture}[scale=.4] 
			\filldraw[black] (9,0) circle (5pt);
			\filldraw[black] (9,4) circle (5pt);
			\filldraw[black] (9,-4) circle (5pt);
			\filldraw[black] (8,2) circle (5pt);
			\filldraw[black] (8,-2) circle (5pt);
			\filldraw[black] (7,0) circle (5pt);
			\filldraw[black] (5,0) circle (5pt);
			\filldraw[black] (3,0) circle (5pt);
			\draw[black] (3,0) -- (9,4) -- (9,-4) -- (3,0);
			\draw[black] (3,0) -- (7,0) -- (9,4);
			\draw[black] (9,-4) -- (7,0);
			\filldraw[black] (-3,0) circle (5pt);
			\filldraw[black] (-4.5,2) circle (5pt);
			\filldraw[black] (-4.5,-2) circle (5pt);
			\filldraw[black] (-6,-4) circle (5pt);
			\filldraw[black] (-6,1.5) circle (5pt);
			\filldraw[black] (-6,-1.5) circle (5pt);
			\filldraw[black] (-6,4) circle (5pt);
			\filldraw[black] (-9,2) circle (5pt);
			\filldraw[black] (-9,-2) circle (5pt);
			\draw[black] (-3,0) -- (-6,4) -- (-9,2) -- (-9,-2) -- (-6,-4) -- (-3,0);
			\draw[black] (-6,4) -- (-6,-4) ;
			\draw[dotted] (-3,0) -- (3,0);
			\end{tikzpicture} \\
			\vspace*{1cm}
			\begin{tikzpicture}[scale=.4] 
			\filldraw[black] (3,0) circle (5pt);
			\filldraw[black] (4.5,2) circle (5pt);
			\filldraw[black] (4.5,-2) circle (5pt);
			\filldraw[black] (6,-4) circle (5pt);
			\filldraw[black] (6,1.5) circle (5pt);
			\filldraw[black] (6,-1.5) circle (5pt);
			\filldraw[black] (6,4) circle (5pt);
			\filldraw[black] (9,2) circle (5pt);
			\filldraw[black] (9,-2) circle (5pt);
			\draw[black] (3,0) -- (6,4) -- (9,2) -- (9,-2) -- (6,-4) -- (3,0);
			\draw[black] (6,4) -- (6,-4) ;
			\filldraw[black] (-3,0) circle (5pt);
			\filldraw[black] (-4.5,2) circle (5pt);
			\filldraw[black] (-4.5,-2) circle (5pt);
			\filldraw[black] (-6,-4) circle (5pt);
			\filldraw[black] (-6,1.5) circle (5pt);
			\filldraw[black] (-6,-1.5) circle (5pt);
			\filldraw[black] (-6,4) circle (5pt);
			\filldraw[black] (-9,2) circle (5pt);
			\filldraw[black] (-9,-2) circle (5pt);
			\draw[black] (-3,0) -- (-6,4) -- (-9,2) -- (-9,-2) -- (-6,-4) -- (-3,0);
			\draw[black] (-6,4) -- (-6,-4) ;
			\draw[dotted] (-3,0) -- (3,0);
			\end{tikzpicture}
		\end{center}
	\caption{Some graphs that are not weakly pervasive. One can subdivide any edge, or contract the dotted edges, and still have a graph that is not weakly pervasive.} \label{fig:dumbbells}
	\end{figure}
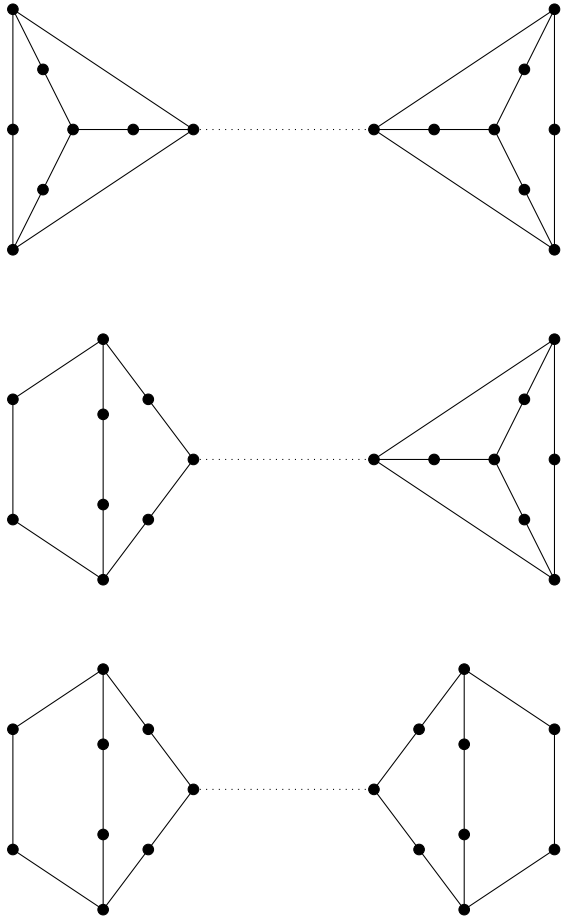
			\begin{figure}
		\centering
		\vspace*{-1.8cm}
		\includegraphics[width=10cm]{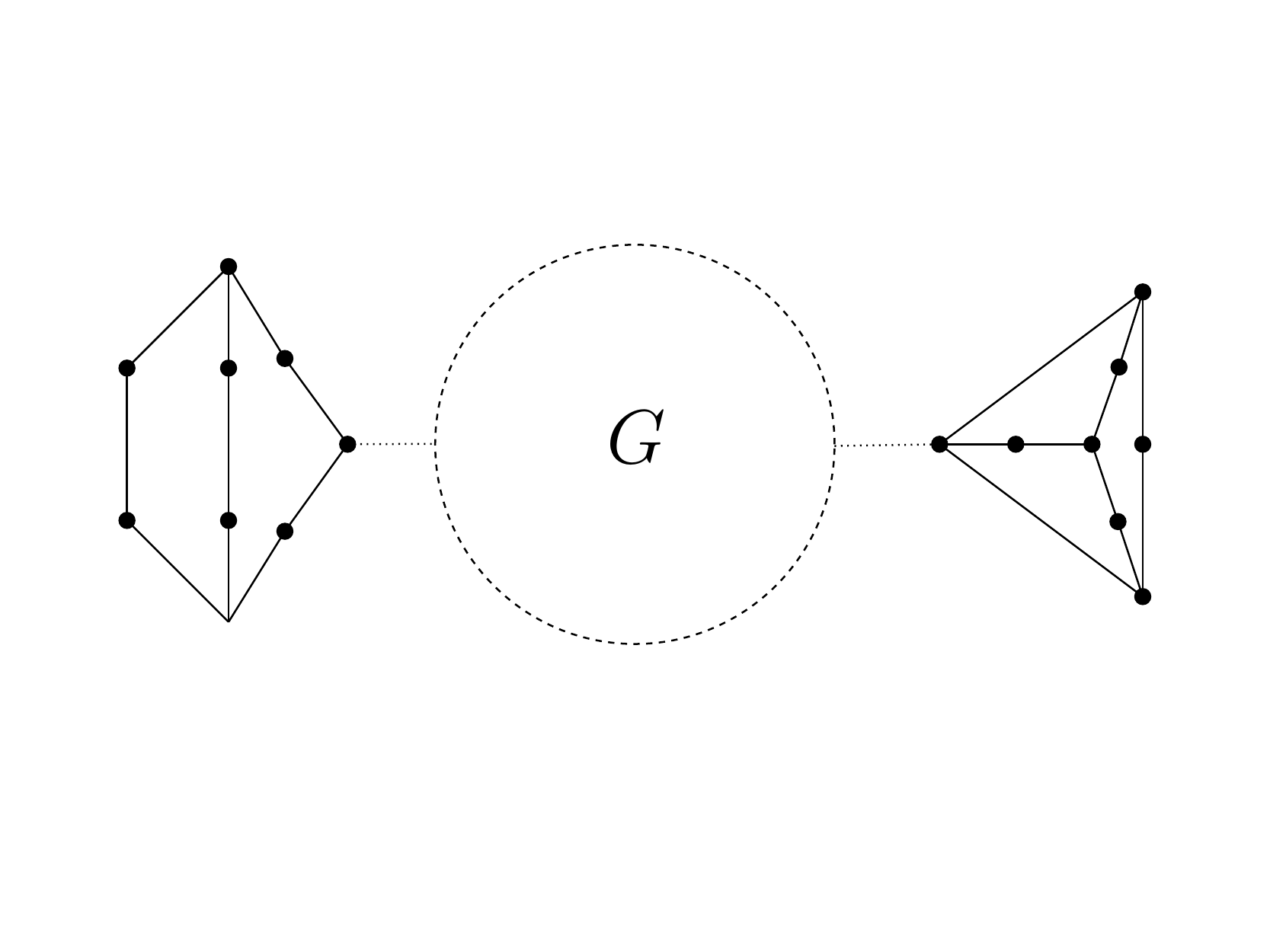}
		\vspace*{-2.3cm}
		\caption{More graph that are not weakly pervasive graphs based on dumbbells. Here, $ G $ is any arbitrary connected graph.} \label{fig:more-for-dumbbell-examples}
	\end{figure}
	
	\begin{remark}
		Notice that the technique used in this section can help us to obtain more graphs that are not weakly pervasive. In particular, any graph having an induced subgraph isomorphic to $ D $ as described in Theorem~\ref{thm:non-Burling-dumbbells}, is not weakly pervasive. Thus the graph in Figure~\ref{fig:more-for-dumbbell-examples}, in which $ G $ is any arbitrary connected graph, has $D $ as an induced subgraph, and is not weakly pervasive. 
	\end{remark}

\section{Miscellaneous} \label{section:Misc}

	\subsection*{First example}
	
	\begin{figure}
		\begin{center}
			\begin{tikzpicture}[scale=.37]
			\filldraw[black] (0,3.5) circle (5pt) node[anchor=south east] {$x$};
			\filldraw[black] (0,-3.5) circle (5pt) node[anchor=north west] {$y$};
			\filldraw[black] (-6,0) circle (5pt) node[anchor=east] {$z$};
			\filldraw[black] (6,0) circle (5pt) node[anchor=west] {$w$};
			\filldraw[black] (-2,0) circle (5pt) node[anchor=west] {$u $};
			\filldraw[black] (2,0) circle (5pt) node[anchor=east] {$v$};
			\filldraw[black] (-4,0) circle (5pt);
			\filldraw[black] (4,0) circle (5pt);
			\filldraw[black] (-1,1.75) circle (5pt);
			\filldraw[black] (1,1.75) circle (5pt);
			\filldraw[black] (-1, -1.75) circle (5pt);
			\filldraw[black] (1, -1.75) circle (5pt);
			\filldraw[black] (-3, 1.75) circle (5pt);
			\filldraw[black] (3,-1.75) circle (5pt);
			\draw[black] (0,3.5) -- (0,-3.5);
			\draw[black] (0,3.5) -- (2,0);
			\draw[black] (0,-3.5) -- (2,0);
			\draw[black] (6,0) -- (2,0);
			\draw[black] (0,3.5) -- (-2,0);
			\draw[black] (0,-3.5) -- (-2,0);
			\draw[black] (-6,0) -- (-2,0);
			\draw[black] (-6,0) -- (0,-3.5);
			\draw[black] (-6,0) -- (0,3.5);
			\draw[black] (6,0) -- (0,-3.5);
			\draw[black] (6,0) -- (0,3.5);
			\end{tikzpicture}
		\end{center}
	\vspace*{-.3cm}
		\caption{A graph that is not weakly pervasive. Theorem~\ref{thm:TwinK4-no-BG}.} \label{fig:twin-K4}
	\end{figure}
	
	\begin{theorem} \label{thm:TwinK4-no-BG}
		Let $ G $ be the graph shown in Figure~\ref{fig:twin-K4}. No subdivision of $ G $ is a derived graph. In particular, $ G $ is not weakly pervasive. 
	\end{theorem}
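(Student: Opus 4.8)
The plan is to recognise the graph $G$ of Figure~\ref{fig:twin-K4} as two type-$4$ subdivisions of $K_4$ glued along a shared edge, and then to obtain a contradiction from the orientation this shared edge is forced to have. The vertices of degree at least~$3$ are $x,y,z,u,v,w$; reading off the figure, $\{x,y,z,u\}$ induces a subdivision of $K_4$ in which exactly the edges $xy$ and $yz$ are left unsubdivided, so it is of type~$4$ with common endpoint $y$, while $\{x,y,v,w\}$ induces a subdivision of $K_4$ in which exactly $xy$ and $xw$ are left unsubdivided, of type~$4$ with common endpoint $x$. These two copies of $K_4$ meet in precisely the two vertices $x,y$ and the edge $xy$.

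Suppose for contradiction that some subdivision $G^*$ of $G$ is a derived graph, and fix an orientation witnessing this. Let $G^*_L$ and $G^*_R$ be the subgraphs of $G^*$ induced on the vertices arising from the left and right copies of $K_4$ (the respective core vertices together with all internal vertices of the subdivided paths between them). First I would check that each of these is again a type-$4$ subdivision of $K_4$. Each is a subdivision of $K_4$ and, as an induced subgraph of the derived graph $G^*$, is itself derived; so by Lemma~\ref{lem:K-4-subordinate} (equivalently, by the characterisation in Lemma~\ref{thm:subdivisions-of-K4}) the two edges that were unsubdivided in $G$ must remain unsubdivided, since otherwise at most one edge of the $K_4$ would stay unsubdivided and the condition of Lemma~\ref{thm:subdivisions-of-K4}---a degree-$3$ vertex incident to two unsubdivided edges---could not be met. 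Hence in $G^*$ the edges $xy$, $yz$, $xw$ are genuine edges, and $G^*_L$, $G^*_R$ are type-$4$ subdivisions of $K_4$ with common endpoints $y$ and $x$, respectively.

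The contradiction now comes from Lemma~\ref{lem:unique-in-star-in-K4-subd}. Applied to the oriented graph $G^*_L$, whose common endpoint is $y$, it forces the two unsubdivided arcs $xy$ and $zy$ to be oriented towards $y$; in particular the arc $xy$ is directed from $x$ to $y$. Applied to $G^*_R$, whose common endpoint is $x$, the same lemma forces $xy$ and $wx$ to be oriented towards $x$, so the arc $xy$ is directed from $y$ to $x$. Since $G^*_L$ and $G^*_R$ inherit their orientations from the single oriented graph $G^*$, the edge $xy$ would have to be oriented in both directions at once, which is impossible. Therefore no subdivision of $G$ is a derived graph, and $G$ is non-weakly pervasive by Lemma~\ref{theorem:the-method}.

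I expect the only delicate part to be the middle step. One must confirm that $G^*_L$ and $G^*_R$ are genuinely \emph{induced} subgraphs carrying no extra chords---this holds because the two copies of $K_4$ share only $x$, $y$ and the edge $xy$, so no vertex on one side is adjacent in $G$ to a vertex on the other---and that a derived further subdivision cannot destroy both unsubdivided $K_4$-edges on either side. Once these points are settled, the clash between the two forced orientations of $xy$ in the final paragraph is immediate and requires no computation.
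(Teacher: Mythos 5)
Your proposal is correct and follows essentially the same route as the paper's own proof: both decompose $G^*$ into the two induced type-4 subdivisions of $K_4$ sharing the edge $xy$, argue via Lemma~\ref{thm:subdivisions-of-K4} that the unsubdivided edges $xy$, $zy$, $xw$ must persist, and then apply Lemma~\ref{lem:unique-in-star-in-K4-subd} to each copy to force the arc $xy$ to be oriented toward $y$ and toward $x$ simultaneously, a contradiction. The only cosmetic difference is that the paper explicitly invokes Theorem~\ref{thm:starcutset-chandelier-degree1} before identifying the unique in-star cutset center, whereas you use the ``in particular'' clause of Lemma~\ref{lem:unique-in-star-in-K4-subd} directly, which is equally valid.
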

	
	\begin{proof}
		Let $ G^* $ be a subdivision of $G $. The graph $ G^* $ contains two subdivisions of $ K_4 $ as induced subgraphs: one whose degree 4 vertices are exactly $ \{u, x, y, z\} $ and does not contain $ v $ and $ w $, which we denote by $H_l $, and the other whose degree 4 vertices are exactly $\{ v, x, y, w\} $ and does not contain $ u $ and $ z$, which we denote by $ H_r $.
		
		If in constructing $ G^* $, any of the edges $ xy $, $zy $, or $ xw $ is subdivided, then $ G^* $ contains a subdivision of $K_4 $ which, by Lemma~\ref{thm:subdivisions-of-K4}, is not a Burling graph, so $ G^* $ is not a Burling graph. Hence, we may assume that  $ xy, zy, xw \in E(G^*) $. 
		
		For the sake of contradiction, assume that $ G^* $ is a derived graph, and consider its orientation as an oriented derived graph.  Notice that $H_l $ and $H_r $ are also derived graph. But neither of $ H_r $ and $ H_l $ are chandeliers, so by Theorem~\ref{thm:starcutset-chandelier-degree1}, each of them should have an in-star cutset. Now by Lemma~\ref{lem:unique-in-star-in-K4-subd}, the center of the in-star cutset of $ H_l $ can only be $ y $, so the edge $ xy $ should be oriented from $ x $ to $ y$ in $ G^*$. On the other hand, again by Lemma~\ref{lem:unique-in-star-in-K4-subd}, the center of the in-star cutset of $ H_r $ can only be $ x $, so the edge $ xy $ should be oriented from $ y $ to $ x$ in $ G^*$. This contradiction shows that $ G^* $ is not a Burling graph.
		
		The second part of the theorem follows directly from Lemma~\ref{theorem:the-method}.
	\end{proof}
	
%
%
	\begin{remark}
		Theorem~\ref{thm:TwinK4-no-BG} provides an example for the fact that the class of Burling graphs is not closed under gluing along one edge. The graph in Figure~\ref{fig:twin-K4} is not a derived graph and is obtained by gluing $ H_r $ and $ H_l $ (in the proof of Theorem~\ref{thm:TwinK4-no-BG}) along one edge, and both of $ H_r $ and $ H_l $ are derived graphs, as shown in Figure~8 of \cite{Part2}.
	\end{remark}

	\subsection*{Second example}

	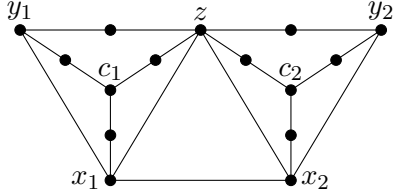
\begin{figure}
		\begin{center}
			\begin{tikzpicture}[scale=.4]
			\filldraw[black] (0, 5) circle (5pt) node[anchor=south] {$z $};
			\draw[black] (-3,0) -- (3,0);
			\filldraw[black] (3, 3) circle (5pt) node[anchor=south] {$c_2 $};
			\filldraw[black] (6, 5) circle (5pt) node[anchor=south] {$y_2 $};
			\filldraw[black] (3, 0) circle (5pt) node[anchor=west] {$x_2$};
			\filldraw[black] (4.5, 4) circle (5pt);
			\filldraw[black] (1.5, 4) circle (5pt);
			\filldraw[black] (3, 1.5) circle (5pt);
			\filldraw[black] (3, 5) circle (5pt);
			\draw[black] (3,0) -- (6,5) -- (0,5) -- (3,0) -- (3,3) -- (6,5);
			\draw[black] (3,3) -- (0,5);
			\filldraw[black] (-3, 3) circle (5pt) node[anchor=south] {$c_1 $};
			\filldraw[black] (-6, 5) circle (5pt) node[anchor=south] {$y_1 $};
			\filldraw[black] (-3, 0) circle (5pt) node[anchor=east] {$x_1$};
			\filldraw[black] (-4.5, 4) circle (5pt);
			\filldraw[black] (-1.5, 4) circle (5pt);
			\filldraw[black] (-3, 1.5) circle (5pt);
			\filldraw[black] (-3, 5) circle (5pt);
			\draw[black] (-3,0) -- (-6,5) -- (0,5) -- (-3,0) -- (-3,3) -- (-6,5);
			\draw[black] (-3,3) -- (0,5);
			\end{tikzpicture}
		\end{center}
		\caption{A graph that is not weakly pervasive. Theorem~\ref{thm:second-twin-K4-non-Burling}.} \label{fig:second-twin-K4}
	\end{figure}
	\begin{theorem} \label{thm:second-twin-K4-non-Burling}
		Let $ G $ be the graph shown in Figure~\ref{fig:second-twin-K4}. No subdivision of $ G $ is a derived graph. In particular, $ G $ is not weakly pervasive. 
	\end{theorem}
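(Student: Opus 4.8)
The plan is to prove the stronger claim that no subdivision $G^*$ of $G$ is a derived graph; the theorem then follows from Lemma~\ref{theorem:the-method}. Write $w_1$ and $w_2$ for the two unlabelled degree-$3$ vertices of Figure~\ref{fig:second-twin-K4}, one on each side. Then $G$ is obtained from two copies of $K_4$, namely $H_l$ on $\{x_1,y_1,z,w_1\}$ and $H_r$ on $\{x_2,y_2,z,w_2\}$, glued along the single common vertex $z$, together with the extra edge $x_1x_2$. In $G$ each of these copies is a type $4$ subdivision whose two unsubdivided edges meet at $x_1$ (namely $x_1y_1,x_1z$), respectively at $x_2$ (namely $x_2y_2,x_2z$).

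First I would make the standard reductions. For a subdivision $G^*$, the induced subgraphs $H_l$ and $H_r$ are subdivisions of $K_4$, so if either is not a Burling graph then neither is $G^*$. By Lemma~\ref{thm:subdivisions-of-K4}, the only way to keep $H_l$ (resp.\ $H_r$) a Burling graph is to leave both $x_1y_1,x_1z$ (resp.\ $x_2y_2,x_2z$) unsubdivided; assuming this, $H_l$ and $H_r$ remain type $4$ subdivisions with apexes $x_1,x_2$, and in particular $zx_1,zx_2\in E(G^*)$. If the edge $x_1x_2$ were also left unsubdivided, then $\{z,x_1,x_2\}$ would induce a triangle and $G^*$ would not be a Burling graph; so we may assume the path $P$ replacing $x_1x_2$ has length at least~$2$.

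Now suppose for contradiction that $G^*$ is a derived graph, and fix such an orientation. Applying Lemma~\ref{lem:unique-in-star-in-K4-subd} to $H_l$ shows that $zx_1$ is oriented from $z$ to $x_1$. Let $C$ be the cycle formed by the edge $zx_1$, the path $P$ from $x_1$ to $x_2$, and the edge $x_2z$; since $P$ has length at least~$2$ and $C$ has no chord, $C$ is a hole. By Lemma~\ref{lem:K-4-subordinate} applied to $H_l$ with the neighbour $z$ of its apex, there are two holes $H_1,H_2$ of $H_l$ through the edge $x_1z$ with $x_1$ the pivot of exactly one of them. Each of $H_1,H_2$ lives on the $H_l$-side of $G^*$, hence meets $C$ in exactly the vertices $x_1,z$ and in no edge other than $x_1z$; thus $(H_1,C)$ and $(H_2,C)$ are dominoes with common edge $x_1z$.

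The contradiction is then a parity count. Because $zx_1$ leaves $z$, the vertex $z$ is not a sink, hence not the pivot, of any hole through $x_1z$; so in Lemma~\ref{thm:domino} applied to each of the two dominoes the distinguished vertex can only be $x_1$, which means that $x_1$ is the pivot of exactly one hole of $\{H_1,C\}$ and of exactly one hole of $\{H_2,C\}$. Writing $p_1,p_2,p\in\{0,1\}$ for the indicators that $x_1$ is the pivot of $H_1,H_2,C$, the three facts read $p_1+p_2=1$, $p_1+p=1$ and $p_2+p=1$; the last two give $p_1=p_2$, contradicting the first. Hence no subdivision of $G$ is a derived graph. The delicate point is to spot that the outer hole $C$, obtained from the connecting path together with the two edges $zx_1$ and $zx_2$, forms a clean domino with each of the two holes of $H_l$ through $x_1z$; once this is checked, the $\{0,1\}$-counting closes the proof immediately.
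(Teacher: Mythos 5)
Your proof is correct, and its skeleton matches the paper's: the same reductions via Lemma~\ref{thm:subdivisions-of-K4} (forcing $x_1y_1,x_1z,x_2y_2,x_2z\in E(G^*)$), the same triangle argument forcing $x_1x_2$ to be subdivided, the same central hole (your $C$, the paper's $H$), and the same toolkit of Lemma~\ref{lem:unique-in-star-in-K4-subd}, Lemma~\ref{lem:K-4-subordinate} and the domino Lemma~\ref{thm:domino}. Where you genuinely depart is the endgame. The paper argues symmetrically on both sides: for each $i\in\{1,2\}$ it pairs $H$ with the one hole $H_i$ of the side subdivision of which $x_i$ is the pivot, concludes from the domino lemma that both $x_1$ and $x_2$ are subordinate in $H$, and then contradicts the hole structure (Lemma~\ref{thm:holes-4-extrema}): $z$ is a source of $H$, so the pivot of $H$, being a common neighbour of the two sources, must be one of $z$'s two neighbours $x_1,x_2$, yet both are subordinate. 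You instead use only the left-hand side's internal structure: the two holes $H_1,H_2$ of $H_l$ through $x_1z$ each form a domino with $C$, and since the arc $zx_1$ prevents $z$ from being a pivot of any hole through that edge, the domino lemma applied to $(H_1,C)$ and $(H_2,C)$, combined with the intra-$H_l$ constraint $p_1+p_2=1$ from Lemma~\ref{lem:K-4-subordinate}, yields three ``pivot of exactly one of the pair'' conditions that are inconsistent mod~$2$. Each approach buys something: the paper's is symmetric and makes the obstruction visible inside the central hole itself; yours is more economical (the right-hand $K_4$-subdivision is used only to guarantee $zx_2\in E(G^*)$ so that $C$ is a hole, and Lemma~\ref{thm:holes-4-extrema} is never invoked) and it isolates a reusable statement, namely that no derived graph contains a type-4 subdivision of $K_4$ together with an external hole through one of the two unsubdivided edges at its apex, meeting the subdivision only in that edge. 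Both proofs rely equally on the implicit convention that the pivot of a hole is the same whether the hole is viewed inside a side subgraph, inside a domino, or inside $G^*$, so that shared point is not a defect of your write-up relative to the paper's.
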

	
	\begin{proof}
		Let $ G^* $ be a subdivision of $ G $. If any of the edges $ x_1y_1$, $x_1 z$,  $ x_2y_2$, and $x_2 z$ are subdivided in $ G^* $, then by Lemma~\ref{thm:subdivisions-of-K4}, it contains a non-Burling subdivision of $K_4 $ and thus is not a derived graph. So, we may assume that all those 4 edges are edges of $ G^*$, in which case, if $ x_1 x_2 \in E(G^*)$, then $ G^* $ has a triangle and thus is not a Burling graph. So, we may also assume that $ x_1 x_2 $ is subdivided in $ G^*$.
		
		For the sake of contradiction, consider any orientation on $ G^* $ which makes it an oriented derived graph, and let $ A(G^*) $ be the set of the arcs of $ G^* $. Let $ i \in \{1,2\}$. By Lemma~\ref{lem:K-4-pre-subordinate}, $ y_ix_i, zx_i \in A(G^*)$. Also, by Lemma~\ref{lem:K-4-pre-subordinate}, $ x_i $ is the pivot of exactly one of the following two holes containing the edge~$ zx_i $: the hole containing the paths obtained by subdividing $ x_ic_i $ and $ c_i z$, and the hole containing $ x_iy_i $ and the path obtained by subdividing the $ y_i z$. Denote this hole by $ H_i $. Now consider the hole $ H $ formed by $ zx_1$, $ zx_2 $, and the path obtained by the subdivision of the edge $ x_1x_2$. The two holes $ H $ and $H_i $ form a domino. Therefore, by Lemma~\ref{thm:domino}, for some $ u \in \{x_i, z\} $, $ u $ is a subordinate vertex of one of the holes and the pivot of the other. Notice that $ u $ cannot be $ z $ because $ zx_i \in A(G^*)$. Thus $ u=x_i $, and as $ x_i$ is the pivot of $ H_i $, it is a subordinate vertex of $ H$.
		
		Therefore, in $ H $, the vertex $ z $ is a source and both its neighbors, namely $ x_1 $ and $x_2 $, are subordinate vertices of $ H$. This contradicts Lemma~\ref{thm:holes-4-extrema}. Thus, $ G^* $ is not a derived graph. 
		
		The second part of the theorem follows directly from Lemma~\ref{theorem:the-method}.
	\end{proof}

%

	\subsection*{Third example}

	\begin{figure}
		\begin{center}
			\begin{tikzpicture}[scale=.35]
			\filldraw[black] (3, 5) circle (5pt) node[anchor=south] {$z_2$};
			\filldraw[black] (6, 3) circle (5pt) node[anchor=south] {$c_2 $};
			\filldraw[black] (9, 5) circle (5pt) node[anchor=south] {$y_2 $};
			\filldraw[black] (6, 0) circle (5pt) node[anchor=west] {$x_2$};
			\filldraw[black] (7.5, 4) circle (5pt);
			\filldraw[black] (4.5, 4) circle (5pt);
			\filldraw[black] (6, 1.5) circle (5pt);
			\filldraw[black] (6, 5) circle (5pt);
			\draw[black] (6,0) -- (9,5) -- (3,5) -- (6,0) -- (6,3) -- (9,5);
			\draw[black] (6,3) -- (3,5);
			\filldraw[black] (-3, 5) circle (5pt) node[anchor=south] {$z_1$};
			\filldraw[black] (-6, 3) circle (5pt) node[anchor=south] {$c_1 $};
			\filldraw[black] (-9, 5) circle (5pt) node[anchor=south] {$y_1 $};
			\filldraw[black] (-6, 0) circle (5pt) node[anchor=east] {$x_1$};
			\filldraw[black] (-7.5, 4) circle (5pt);
			\filldraw[black] (-4.5, 4) circle (5pt);
			\filldraw[black] (-6, 1.5) circle (5pt);
			\filldraw[black] (-6, 5) circle (5pt);
			\draw[black] (-6,0) -- (-9,5) -- (-3,5) -- (-6,0) -- (-6,3) -- (-9,5);
			\draw[black] (-6,3) -- (-3,5);
			\filldraw[black] (-3, 0) circle (5pt) node[anchor=south] {$w_1$};
			\filldraw[black] (-3, -4) circle (5pt);
			\filldraw[black] (3, 0) circle (5pt) node[anchor=south] {$w_2$};
			\filldraw[black] (3, -4) circle (5pt);
			\draw[black] (-3,5) -- (3,5);
			\draw[black] (-6,0) -- (6,0);
			\draw[black] (-6,0) -- (-3,-4) -- (3,-4) -- (6,0);
			\end{tikzpicture}
		\end{center}
		\caption{A graph that is not weakly pervasive. Theorem~\ref{thm:theta-K4-non-Burling}.} \label{fig:theta-K4}
	\end{figure}
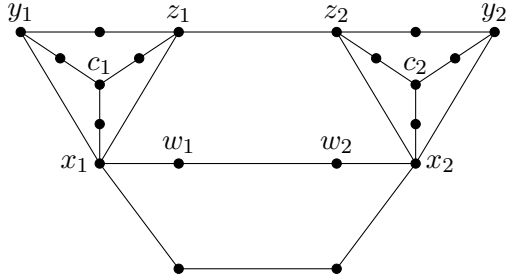

	\begin{theorem} \label{thm:theta-K4-non-Burling}
		Let $ G $ be the graph shown in Figure~\ref{fig:theta-K4}. No subdivision of $ G $ is a derived graph. In particular, $ G $ is not weakly pervasive.
	\end{theorem}
	
	\begin{proof}
		Let $ G^* $ be a subdivision of $ G $. If any of the edges $ x_1y_1$, $x_1 z_1$,  $ x_2y_2$, and $x_2 z_2$ are subdivided in $ G^* $, then by Lemma~\ref{thm:subdivisions-of-K4}, it contains a non-Burling subdivision of $K_4 $ and thus is not a derived graph. So, we may assume that all those 4 edges are edges of $ G^*$.
		
		For the sake of contradiction, consider any orientation on $ G^* $ which makes it an oriented derived graph, and let $ A(G^*) $ be the set of the arcs of $ G^* $. Let $ i \in \{1,2\}$. By Lemma~\ref{lem:K-4-pre-subordinate}, $ y_ix_i, z_ix_i \in A(G^*)$. Again By Lemma~\ref{lem:K-4-pre-subordinate}, $ x_i $ is the pivot of exactly one of the following two holes containing the edge~$ z_ix_i $: the hole containing the paths obtained by subdividing $ x_ic_i $ and $ c_i z_i$, and the hole containing $ x_iy_i $ and the path obtained by subdividing the $ y_i z_i$. Denote this hole by $ H_i $. Now consider the hole $ H $ passing through $ z_1x_1$, $ z_2x_2 $, the path obtained by the subdivision of the edge $ z_1z_2$, and the path $ x_1 w_1 \dots w_2 x_2$. The two holes $ H $ and $H_i $ form a domino. Therefore, by Lemma~\ref{thm:domino}, for some $ u \in \{x_i, z_i\} $, $ u $ is the subordinate vertex of one of the holes and the pivot of the other. Notice that $ u $ cannot be $ z $ because $ z_ix_i \in A(G^*)$. Thus $ u=x_i $, and as $ x_i$ is the pivot of $ H_i $, it is a subordinate vertex of $ H$.
		But by Lemma~\ref{lem:theta-pivot}, either $ x_1 $ or $x_2 $ should be pivot of every hole in the long theta in $ G^* $. This contradiction completes the proof. 
		
		The second part of the theorem follows directly from Lemma~\ref{theorem:the-method}.
	\end{proof}
%


\end{document}